\numberwithin{equation}{section} 
\newtheorem{theorem}{Theorem}[section]
\newtheorem{theorem*}{Theorem}
\newtheorem{lemma}[theorem]{Lemma}
\newtheorem{proposition}[theorem]{Proposition}
\newtheorem{corollary}[theorem]{Corollary}
\newtheorem{remark}[theorem]{Remark}
\newtheorem{remark*}[theorem*]{Remark}
\newtheorem{definition}[theorem]{Definition}
\def\R{{\mathbb R}}
\def\S{{\mathbb S}}
\def\cL{{\mathcal L}}
\def\cB{{\mathcal B}}
\def\pv{\mbox{P.V.}}
\def\1{\left(}
\def\2{\right)}
\def\3{\left\{}
\def\4{\right\}}
\def\8{\infty}
\DeclareMathOperator*{\osc}{osc}
\begin{document}
\title{The Nonlocal Inverse Problem of Donsker and Varadhan}

\author[]{Gonzalo D\'avila}
\address{
	Gonzalo D\'avila:
Departamento de Matem\'atica, Universidad T\'ecnica Federico Santa Mar\'ia \\
Casilla: v-110, Avda. Espa\~na 1680, Valpara\'iso, Chile
}
\email{gonzalo.davila@usm.cl}

\author[]{Erwin Topp}
\address{
	Erwin Topp:
	Departamento de Matem\'atica y C.C., Universidad de Santiago de Chile,
	Casilla 307, Santiago, CHILE.
	\newline {\tt erwin.topp@usach.cl}
}

\date{\today}

\begin{abstract} 
In this paper we prove  a nonlocal version of the celebrated Inverse Problem of Donsker and Varadhan~\cite{DV} for nonlocal elliptic operators of the form
$$
\cL u = L_K u + \cB_K (h,u),
$$
where $L_K$ is a uniformly elliptic nonlocal operator with smooth coefficients, and, for $h:\R^N \to \R$ smooth and bounded, $\cB_K(h,u)$ is the associated bilinear form, which can be regarded as a nonlocal transport term.
\end{abstract}

\keywords{nonlocal diffusion, nonlocal gradient, weak solution, inverse problem, eigenvalue problem, min-max formula, maximum principle}

\maketitle

\section{Introduction}

In the classic work by Donsker and Varadhan~\cite{DV} the authors were, among other things, interested in the so called ``Inverse Problem" between two linear operators $L_1$ and $L_2$, that is, they studied the relation between two linear operators $L_1$ and $L_2$ whenever the principal eigenvalue of $L_1+V$ and $L_2+V$ coincide for all potentials $V$ and all domains $\Omega$. Here 
\[
L_iu=\text{div}(A_i(x)\nabla u)+b_i(x)\cdot\nabla u
\]
where all the coefficients are assumed to be smooth and the matrix $A_i$ is elliptic.

The authors proved that if for each bounded domain $\Omega$ with smooth boundary and each potential $V \in C^\infty(\R^N)$ we have $\lambda_1(L_1 + V, \Omega) = \lambda_1(L_2 + V, \Omega)$, then the diffusion coefficients must coincide, i.e. $A_1(x)=A_2(x)$, and that the operators $L_1$ and $L_2$ roughly differ by a harmonic function of either $L_2$ or the adjoint $L_2^*$. For a detailed statement of the result, see Theorem~\ref{localDV} below.

The goal of this paper is to study the ``Inverse Problem" between two linear nonlocal operators $\cL_1$ and $\cL_2$ and to prove a nonlocal analogous result to the above mentioned local case.

Let $s \in (0, 1)$, $0 < \gamma \leq \Gamma < +\infty$ and consider symmetric measurable kernels $K: \R^N \times \R^N \to \R$ such that $K(x,y) = K(y,x)$ for all $x,y \in \R^N$. Assume furthermore that $K$ satisfies the ellipticity condition
\begin{equation}\label{eliptic}
\frac{\gamma c_{N, s}}{|x - y|^{N + 2s}} \leq K(x,y) \leq \frac{\Gamma c_{N, s}}{|x -y|^{N + 2s}}, \quad x \neq y,
\end{equation}
where $c_{N,s} > 0$ is a normalizing constant to be precised later.

Given $K$ satisfying \eqref{eliptic}, we define the linear operator $L_K$ by 
\begin{align}\label{LK}
L_Ku(x)=\text{P.V.}\int_{\R^N}(u(y)-u(x))K(x,y)dy.
\end{align}

In the case $K(x,y) = |x - y|^{-(N + 2s)}$, $L_K$ corresponds to $-(-\Delta)^s$, the fractional Laplacian of order $2s$ which we just denote $\Delta^s$ and that can be written as
\[
\Delta^s u(x)=c_{N,s}\text{P.V.}\int_{\R^N}\frac{u(y)-u(x)}{|x-y|^{N+2s}}dy.
\]

The normalizing constant $c_{N, s}$ is chosen such that $\Delta^su \to \Delta u$ for smooth functions with integrable tail.

\medskip

Additionally, given $u, v: \R^N \to \R$ and $K$ a kernel satisfying \eqref{eliptic}, we define 
\begin{align}\label{cB}
\cB_K(u,v)(x)=\frac{1}{2}\int_{\R^N}(u(y)-u(x))(v(y)-v(x))K(x, y)dy.
\end{align}

This operator plays the role of a nonlocal transport term and, in the particular case when $K$ is the kernel of the fractional Laplacian, we have 
\[
\cB(u,v)\to \nabla u \nabla v, \quad \mbox{as} \ s \to 1,
\]
for sufficiently smooth functions $u,v$. Therefore the operator $\cB(u,v)$ can be regarded as the nonlocal analogous of $\nabla u\nabla v$. This operator appears naturally when studying Dirichlet forms associated to Levy processes and in the study of fractional harmonic maps to the sphere, see \cite{DR, DR2,Millot-Sire, BDS, MR}.

For simplicity we will often write $\cB_K(u,u)=\cB_K(u)$, and will omit the dependence on the kernel whenever it is clear from the context. 

Observe that $\cB$ is the \textit{carr\'e  du champ} operator associated to the diffusion process $L_K$, and in the literature is sometimes denoted by $\Gamma$ see \cite{BE, SWZ}. The nonlocal operators $L_K$ and $\cB_K$ are intimately linked through the ``product rule" property
\begin{align}\label{product}
L_K(uv)=u L_Kv + v L_Ku + 2\cB_K(u,v),
\end{align}
%since we have the pointwise equality
%\[
%(u(y)v(y)-u(x)v(x))=u(x)(v(y)-v(x))+v(x)(u(y)-u(x)) + (u(y)-u(x))(v(y)-v(x)).
%\]
as well as by the integration by parts formula
\begin{align}\label{porpartes}
\int_{\R^N}L_K(u)(x)v(x)dx=-\int_{\R^N}\cB_K(u,v)(x)dx,
\end{align}
both valid for smooth functions $u,v$ with certain behavior at infinity depending upon the kernel $K$.
%for adequate functions $u,v$.

Given $K$ satisfying~\eqref{eliptic} and $h: \R^N \to \R$ bounded and sufficiently smooth, we consider the nonlocal linear operator
\begin{align}\label{LB}
\cL u := L_Ku+\cB_K(u,h),
\end{align}
which will play nonlocal counterpart of the local operator $L$ defined above. Whenever its clear from the context, we will omit the dependence on $K$ and $h$ on $\cL$. 

Given $\Omega \subset \R^N$ bounded domain and a potential $V: \R^N \to \R$, we denote $\lambda_1(\cL + V, \Omega)$ the principal eigenvalue associated to the operator $\cL + V$ on $\Omega$. The main result of this article, the nonlocal version of the inverse problem of Donsker and Varadhan~\cite{DV}, is the following theorem.
\begin{theorem}\label{teo1}
	Let $A_i(\cdot, \cdot)\in \S^n$ be symmetric, positive definite matrices satisfying $A_i(x,y)=A_i(y,x)$ for $i=1,2$. Denote the associated kernels $K_i$ given by
\[
K_i(x,y)=\frac{1}{|(x-y)^tA_i(x,y)(x-y)|^{(N+2s)/2}}
\]
	
For $i=1,2$, let $h_i$ be smooth bounded functions with $\osc_{\R^N}h_i<1$ and denote
\[
\mathcal L_{i} = L_{K_i} + \cB_{K_i}(\cdot, h_i)
\]
	
Assume that for each $\Omega \subset \R^N$ bounded domain with smooth boundary and potential $V \in C^\infty(\R^N)$ we have $\lambda_1(\cL_1 + V, \Omega) = \lambda_1(\cL_2 + V, \Omega)$. Then,
\[
A_1(x_0,x_0)=A_2(x_0,x_0).
\]

In addition, if the matrices $A_i$ can be decomposed as
\begin{align}\label{decomp}
A_i(x,y)=\tilde A_i(x)\tilde A_i(y)+\tilde A_i(y)\tilde A_i(x)
\end{align}
then 
\begin{itemize}
\item[i.-] $\tilde A_1=\tilde A_2$
\item[ii.-] There exists a constant $C$ such that $h_1(x)=h_2(x)+C$.
\end{itemize} 
\end{theorem}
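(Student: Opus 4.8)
The plan is to argue via the variational (Berestycki--Nirenberg--Varadhan type) characterization of the principal eigenvalue,
\[
\lambda_1(\cL_i + V,\Omega)=\sup\Big\{\mu\in\R:\ \exists\,\phi\in C(\R^N),\ \phi>0\text{ in }\R^N,\ \cL_i\phi+V\phi+\mu\phi\le 0\text{ in }\Omega\Big\},
\]
equivalently $\lambda_1(\cL_i+V,\Omega)=\sup_{\phi>0}\inf_\Omega(-\cL_i\phi/\phi-V)$, which I take to be available. A preliminary remark used throughout: since $\osc_{\R^N}h_i<1$, expanding $\cB_{K_i}(u,h_i)$ gives $\cL_i u=L_{\tilde K_i}u$ with the (generally non-symmetric) kernel $\tilde K_i(x,y):=\big(1+\tfrac12(h_i(y)-h_i(x))\big)K_i(x,y)$, and $1+\tfrac12(h_i(y)-h_i(x))\in(\tfrac12,\tfrac32)$, so $\tilde K_i$ still satisfies a two-sided bound of the form \eqref{eliptic}; hence each $\cL_i$ is a uniformly elliptic nonlocal operator and the usual toolkit (interior regularity, Harnack inequality, Liouville theorem, principal eigenvalue theory) applies to it.

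\textbf{Step 1 (the diagonal of $A$): blow-up and Faber--Krahn.} Fix $x_0$ and let $E\subset\R^N$ be an arbitrary bounded domain with smooth boundary. Apply the hypothesis with $V\equiv 0$ and $\Omega=x_0+rE$, $r>0$ small. Rescaling a test function by $u_r(x)=u((x-x_0)/r)$ and using $A_i(x_0+r\xi,x_0+r\eta)=A_i(x_0,x_0)+O(r)$ inside the kernel, one checks that $r^{2s}\cL_i(u_r)(x_0+r\cdot)\to L_{\bar K_i}u$ locally uniformly for smooth $u$, where $\bar K_i(z)=|z^tA_i(x_0,x_0)z|^{-(N+2s)/2}$ is the constant-coefficient anisotropic fractional kernel associated with the diagonal value $A_i(x_0,x_0)$ (the transport part $\cB_{K_i}(\cdot,h_i)$ is of lower order under this rescaling). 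Feeding this into the min-max formula gives $r^{2s}\lambda_1(\cL_i,x_0+rE)\to\lambda_1(L_{\bar K_i},E)$, hence $\lambda_1(L_{\bar K_1},E)=\lambda_1(L_{\bar K_2},E)$ for every bounded smooth $E$. Writing $A_i:=A_i(x_0,x_0)$, the linear change of variables $x\mapsto A_i^{1/2}x$ normalises $\bar K_i$ to the kernel of $(-\Delta)^s$ and gives $\lambda_1(L_{\bar K_i},E)=(\det A_i)^{-1/2}\,\nu(A_i^{1/2}E)$, where $\nu(D)$ is the first Dirichlet eigenvalue of $(-\Delta)^s$ on $D$. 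Taking $E=A_1^{-1/2}D$ turns the equality into $\nu(SD)=(\det S)\,\nu(D)$ for all bounded smooth $D$, with $S:=A_2^{1/2}A_1^{-1/2}$, $\det S>0$. Testing $D=B_1$ and invoking the strict fractional Faber--Krahn inequality — among domains of given volume the ball uniquely minimises $\nu$ — forces $\det S=1$ (the two inequalities coming from the $1\leftrightarrow2$ symmetry of the hypothesis) and then that $SB_1$ is a ball, i.e.\ $S$ is orthogonal; since $S^tS=A_1^{-1/2}A_2A_1^{-1/2}=I$ we obtain $A_2=A_1$, that is $A_1(x_0,x_0)=A_2(x_0,x_0)$.

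\textbf{Step 2 (the decomposition case).} If $A_i(x,y)=\tilde A_i(x)\tilde A_i(y)+\tilde A_i(y)\tilde A_i(x)$, then $A_i(x,x)=2\tilde A_i(x)^2$, so Step 1 gives $\tilde A_1(x)^2=\tilde A_2(x)^2$ for all $x$, and uniqueness of the positive-definite square root yields $\tilde A_1=\tilde A_2$, which is (i). In particular $K_1=K_2=:K$, so now $\cL_i=L_K+\cB_K(\cdot,h_i)$ share the same diffusion part. I would then run the Donsker--Varadhan argument for these two operators: given a positive test function $\phi_1$ and a point $x_0$, choose a potential $V$ so that $-\cL_1\phi_1/\phi_1-V$ attains its infimum over $\Omega$ only at $x_0$; equality of the principal eigenvalues then produces, via the min-max formula and a compactness argument for the resulting family of $\e$-approximate positive supersolutions of $\cL_2$ (based on the nonlocal Harnack inequality and regularity estimates), a positive function $\phi$ on $\R^N$ implementing the conjugation $\cL_1 u=\phi^{-1}\cL_2(\phi u)-(\cL_2\phi/\phi)\,u$ together with $\cL_2\phi=0$ (or $\cL_2^*\phi=0$) in $\R^N$ — the nonlocal analogue of ``$\cL_1$ and $\cL_2$ differ by a harmonic function''. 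Comparing kernels, this says $\tilde K_1(x,y)=\tfrac{\phi(y)}{\phi(x)}\tilde K_2(x,y)$, and since $\osc h_i<1$ forces $\tilde K_1/\tilde K_2=\big(1+\tfrac12(h_1(y)-h_1(x))\big)\big/\big(1+\tfrac12(h_2(y)-h_2(x))\big)\in(\tfrac13,3)$, the ratio $\phi(y)/\phi(x)$ is bounded above and below by positive constants. Hence $\phi$ is a bounded positive function harmonic in all of $\R^N$ for the uniformly elliptic nonlocal operator $\cL_2$ (or $\cL_2^*$), so by the Liouville theorem $\phi$ is constant; therefore $\tilde K_1=\tilde K_2$, i.e.\ $h_1(y)-h_1(x)=h_2(y)-h_2(x)$ for all $x,y$, which is (ii) with $C=h_1(x_0)-h_2(x_0)$.

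\textbf{Main obstacle.} The hard part is the Donsker--Varadhan step in Step 2. The min-max formula only delivers, for each $\phi_1$ and $x_0$, a family of $\e$-approximate supersolutions $\phi^\e>0$ with $\cL_2\phi^\e/\phi^\e\le\cL_1\phi_1/\phi_1+o(1)$ near $x_0$; one must extract a bona fide limit $\phi$ and upgrade this one-sided information — valid for all $\phi_1$, all $x_0$, and, by symmetry, with $1$ and $2$ interchanged — to the exact conjugation identity and the harmonicity of $\phi$. This rests on the full nonlocal elliptic regularity and Harnack machinery (for the compactness and for running a strong maximum principle), must be carried out for the non-symmetric kernels $\tilde K_i$, and is the one place where one also needs the precise nonlocal Liouville theorem compatible with the hypotheses here. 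The blow-up and Faber--Krahn ingredients of Step 1, and the elementary reductions of Step 2, are by comparison routine.
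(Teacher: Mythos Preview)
Your Step~1 takes a genuinely different route from the paper. The paper never rescales domains; instead it works with the Donsker--Varadhan functional $I(\mu)=-\inf_{u>0}\int(\cL u/u)\,d\mu$, establishes the closed formula
\[
I(f\,dx)=\int\cB_K(\sqrt f)\,dx-\tfrac12\int\cB_K(f,h)\,dx-\mathcal E,\qquad \mathcal E\ge 0,
\]
via the change $u=\sqrt f\,v$, rescales the \emph{measure} $f_\lambda=\lambda^{-N}f(\cdot/\lambda)\to\delta_{x_0}$, and reads off $\int\cB_{A_1(x_0,x_0)}(\sqrt f)=\int\cB_{A_2(x_0,x_0)}(\sqrt f)$ from the leading $\lambda^{-2s}$ term; it then identifies $A_1(x_0,x_0)=A_2(x_0,x_0)$ by a Fourier computation together with one-dimensional rescalings and rotations of $\sqrt f$. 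Your domain blow-up plus the fractional Faber--Krahn inequality is more geometric and bypasses the $I(\mu)$ machinery entirely. The cost is that the convergence $r^{2s}\lambda_1(\cL_i,x_0+rE)\to\lambda_1(L_{\bar K_i},E)$ is asserted rather than proved: feeding rescaled test functions into the sup--inf gives only one inequality, and the matching bound requires a compactness argument for (near-)optimal $\phi$'s that you do not supply. This is fixable, but it is not ``routine''.

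Your Step~2, however, has a genuine gap, and you essentially acknowledge it. The conjugation programme you outline is not a proof: you do not establish compactness of the $\e$-approximate supersolutions, you do not explain how the one-sided inequality upgrades to the exact identity $\cL_1 u=\phi^{-1}\cL_2(\phi u)$, and you ignore the ``adjoint'' branch $\cL_2^*\rho=0$ that already appears in the local Donsker--Varadhan theorem. The paper avoids all of this. Once $K_1=K_2=:K$ (from part~(i)), it returns to the same $I(\mu)$ expansion: the $\lambda^{-2s}$ terms now cancel, the error $\mathcal E$ and the quadratic-in-$h$ term are $O(\lambda^{2-2s})$, and the surviving $O(1)$ contribution is $\int f(x)\,L_K h_i(x_0)\,dx$. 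Equality of $I_1$ and $I_2$ therefore gives $L_K h_1(x_0)=L_K h_2(x_0)$ for every $x_0$, and the Liouville theorem for $L_K$ yields $h_1-h_2\equiv C$. This is both simpler and rigorous; the key idea you are missing is to exploit the \emph{second} term in the $I(\mu)$ expansion rather than to attempt a nonlocal conjugation.
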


Observe that in general $A(x)A(y)\neq A(y)A(x)$ and therefore the associated kernel $K$ would not be symmetric, which is a key assumption in order to be able to integrate by parts. The decomposition in the statement of the Theorem is the natural symmetric version when the variables $x$ and $y$ are decoupled. Also note that the requirement $\osc_{\R^N}h_i<1$ seems natural in order to have the comparison principle, see Remark \ref{rmkcomparison}. 

Theorem \ref{teo1} is the nonlocal analogous of the result presented in~\cite{DV} in the special case $b=\nabla h$ for some smooth function $h$. For the sake of comparison let us recall the result presented in~\cite{DV}.
\begin{theorem}[Theorem 3.2 \cite{DV}]\label{localDV}
	If $L_1$ and $L_2$ are two operators of the form
	\[
	L_iu=\text{div}(A_1\nabla u)+b_i\cdot\nabla u
	\]
	with $A_i,b_i$ smooth. Assume that for each bounded domain $\Omega$ with smooth boundary and each potential $V \in C^\infty(\R^N)$ we have $\lambda_1(L_1 + V, \Omega) = \lambda_1(L_2 + V, \Omega)$, then either there exists a positive harmonic function $u$ for
	$L_2$ such that $L_1\phi = L_2( u\phi)/u$ or there exists a positive invariant density $\rho$ for $L_2$ (i.e. $L_2^*\rho=0$, $\rho>0$) such that $L_1\phi = L_2^*(\rho\phi)/\rho$.
\end{theorem}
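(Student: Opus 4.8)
I would follow Donsker and Varadhan's scheme: first reduce to equal diffusion matrices by a blow-up, then identify the drifts through the variational (large-deviation) description of the principal eigenvalue.

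\textbf{Stage 1 ($A_1=A_2$).} Fix $x_0$ and a bounded smooth domain $\Omega$; for $\e>0$ set $\Omega_\e:=x_0+\e\Omega$ and conjugate the eigenvalue problem for $L_i+V$ on $\Omega_\e$ by the dilation $u\mapsto u(x_0+\e\,\cdot)$. The rescaled operators $\e^2(L_i+V)$ converge (uniformly on compacta, and stably for the principal eigenvalue via the maximum principle) to the frozen operator $u\mapsto\mathrm{tr}\,(A_i(x_0)D^2u)$, the contributions of $\nabla A_i$, $b_i$ and $V$ carrying a positive power of $\e$. Hence $\e^2\lambda_1(L_i+V,\Omega_\e)\to\lambda_1(\mathrm{tr}(A_i(x_0)D^2\cdot),\Omega)$, and the hypothesis forces this limit to be independent of $i$ for every $\Omega$. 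By the affine change $x=A_i(x_0)^{1/2}\xi$ and rotation invariance of the Dirichlet eigenvalue of $-\Delta$, this is equivalent to $\lambda_1(-\Delta,P\Omega)=\lambda_1(-\Delta,\Omega)$ for all $\Omega$, where $P$ is the symmetric positive-definite factor in the polar decomposition of $A_2(x_0)^{-1/2}A_1(x_0)^{1/2}$. Taking $\Omega=B_1$, so $PB_1$ is an ellipsoid of volume $\det(P)\,|B_1|$, and comparing with a ball of the same volume, the strict Faber--Krahn inequality together with the scaling $\lambda_1(-\Delta,tE)=t^{-2}\lambda_1(-\Delta,E)$ — applied to $P$ and to $P^{-1}$ — yields first $\det P=1$ and then $P=I$, i.e. $A_1(x_0)=A_2(x_0)$.

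\textbf{Stage 2 (the drifts).} Write $A:=A_1=A_2$ and $L_i=\mathrm{div}(A\nabla\cdot)+b_i\cdot\nabla$. By the Berestycki--Nirenberg--Varadhan min-max formula, $\lambda_1(L+V,\Omega)=\inf_{u>0\text{ in }\bar\Omega}\sup_\Omega(Lu/u+V)$; dualising in $V$ gives the Donsker--Varadhan formula $\lambda_1(L+V,\Omega)=\sup_{\mu\in\mathcal P(\Omega)}(\int V\,d\mu-I_L(\mu))$ with rate function $I_L(\mu)=-\inf_{u>0}\int(Lu/u)\,d\mu$. Since $V\mapsto\lambda_1(L+V,\Omega)$ is convex and lower semicontinuous (the principal eigenvalue being simple, by Krein--Rutman), it is the biconjugate of $I_L$, so the hypothesis is equivalent to $I_{L_1}=I_{L_2}$ on $\mathcal P(\Omega)$ for every bounded smooth $\Omega$ — equivalently, to $\phi_1\rho_1\propto\phi_2\rho_2$ on $\Omega$ for every $(V,\Omega)$, $\phi_i$ and $\rho_i$ being the principal eigenfunctions of $L_i+V$ and of its adjoint. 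It remains to read $L$ off $I_L$. For $\mu=f\,dx$ with $f>0$ smooth compactly supported, substituting $u=e^v$ gives $\int(Lu/u)\,d\mu=\int(|A^{1/2}\nabla v|^2f+\nabla v\cdot(fb-A\nabla f))\,dx$, and the exact quadratic minimisation over gradient fields $\nabla v$ shows $I_L(f\,dx)$ is a Dirichlet-type energy depending on $L$ only through $A$ and through an equivalence class of drifts, two drifts being equivalent precisely when they differ by $2A\nabla\log u$ with $u$ an $L$-harmonic function. Thus $I_{L_1}=I_{L_2}$ forces $A_1=A_2$ (again) and $b_1=b_2+2A\nabla\log u$ for some $u>0$ with $L_2u=0$; and since $I_L$ is invariant under time-reversal of the diffusion, whose generator is $L_2^*$ conjugated by a positive invariant density $\rho$ ($L_2^*\rho=0$), the remaining alternative is $b_1=-b_2+2A\nabla\log\rho$. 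Translating back, in the first case $L_1\phi=L_2(u\phi)/u$, in the second $L_1\phi=L_2^*(\rho\phi)/\rho$.

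\textbf{Expected main obstacle.} The crux is Stage 2: the precise evaluation of $I_L$ and, above all, the identification of its gauge degrees of freedom with $L_2$-harmonic functions and $L_2^*$-invariant densities — in particular the existence of such globally positive $u$ or $\rho$ on $\R^N$, obtained as exhaustion limits of Dirichlet eigenfunctions and controlled by Harnack inequalities, together with the convexity/differentiability facts used to dualise. Stage 1, and the final passage from the drift identities to the conjugation statements, are comparatively routine.
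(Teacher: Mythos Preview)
This theorem is not proved in the paper; it is quoted from Donsker--Varadhan~\cite{DV}, and the paper only records a three-line outline of their argument: first $A_1=A_2$, then the scalar identity
\[
\operatorname{div}(b_1)+\langle b_1,A^{-1}b_1\rangle=\operatorname{div}(b_2)+\langle b_2,A^{-1}b_2\rangle,
\]
and finally $A^{-1}(b_1-b_2)=\nabla\phi$ for a scalar $\phi$ solving an auxiliary PDE, from which the two alternatives are read off. So there is no full in-house proof to compare against, only this sketch.

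Your Stage~1 is a genuinely different route from the one the paper (and~\cite{DV}) takes: rather than concentrating \emph{measures} in the dual functional $I(\mu)$ and passing to the limit there (which is what the paper does in the nonlocal setting, Lemmas~\ref{difusionigual}--\ref{Fourier}), you concentrate the \emph{domain} $\Omega_\e=x_0+\e\Omega$ and invoke Faber--Krahn on the frozen-coefficient limit. This is correct and pleasantly geometric; the trade-off is that it does not extend to the nonlocal problems treated in the paper, where the rescaled operator still sees the whole of~$\R^N$.

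Your Stage~2 is in the right spirit but contains a real gap. The pointwise minimiser of the quadratic $f\,p^tAp+p\cdot(fb-A\nabla f)$ is $p^*=\tfrac12\nabla\log f-\tfrac12 A^{-1}b$, which is a gradient only when $A^{-1}b$ is; in general your minimisation is genuinely constrained to gradient fields, and the value of $I_L(f\,dx)$ depends on the Hodge-type projection of $A^{-1}b$, not on $b$ itself. You assert that the resulting equivalence class is exactly ``$b\sim b+2A\nabla\log u$ with $L_2u=0$'' (respectively the time-reversed version), but that is precisely the content you need to prove: it is what~\cite{DV} extract from the scalar identity above together with the PDE satisfied by $\phi$. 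Without deriving that identity and the gradient structure of $A^{-1}(b_1-b_2)$, the passage from ``$I_{L_1}=I_{L_2}$'' to the two conjugation alternatives is an assertion, not an argument. Your ``expected main obstacle'' paragraph correctly flags this, but the body of Stage~2 does not overcome it.
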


In the proof of this theorem, the authors first prove that the diffusion coefficients are the same, i.e. $ A_1=A_2$, and afterwards they conclude that 
\begin{align}\label{condiciondriftlocal}
\text{div} (b_1)+\langle b_1, A_1^{-1}b_1\rangle=\text{div} (b_2)+\langle b_2, A_2^{-1}b_2\rangle.
\end{align}

From this point, and after several computations, they show that $A^{-1}(b_1-b_2)=\nabla \phi$ for some scalar function $\phi$ that solves a related PDE, which allows them to conclude the theorem.

If we assume, in the local case, that the coefficients $b_i=\nabla h_i$ with $A, \nabla h_i$ bounded we can conclude that $h_1(x)=h_2(x)+C$, for some constant $C$. Indeed, from \eqref{condiciondriftlocal} we have
\[
\Delta h_1 +\langle \nabla h_1, A_1^{-1}\nabla h_1\rangle=\Delta h_2+\langle \nabla h_2, A_2^{-1}\nabla h_2\rangle.
\]
At this point we can assume that $\text{det}(A)=1$ and so $A^t=A^{-1}$. Therefore, if we denote $w=h_1-h_2$ we have that $w$ solves 
\[
\Delta w+\kappa\cdot\nabla w=0, 
\]
where $\kappa=A^{-1} \nabla h_1+ A \nabla h_2$, which is a bounded function. At this point we can apply Liouville's Theorem to conclude $w$ is constant, from here the conclusion follows. Therefore in this particular case the harmonic function $u$ and the invariant measure $\rho$ are just the constant function, and the result becomes the local version.

\medskip

Naturally, the first step in order to prove Theorem \ref{teo1} is the study of the principal eigenvalue problem associated to the nonlocal operator \eqref{LB}. More precisely, given $\Omega \subset \R^N$ a bounded domain and a potential $V \in L^\infty(\Omega)$, we are interested in the existence of nontrivial solutions $(\phi, \lambda)$ with $\phi > 0$ in $\Omega$ for the problem
\begin{equation}\label{eqintro}
\cL \phi + V\phi = -\lambda \phi \quad \mbox{in} \ \Omega; \qquad u = 0 \quad \mbox{in} \ \Omega^c.
\end{equation}

The integration by parts formula~\eqref{porpartes} makes possible to pose this problem in the weak sense (see Section~\ref{secweak} for a precise definition of the fractional Sobolev space $\mathbb H_0^s(\Omega)$). 

Even though the diffusion $L_K$ is symmetric and self-adjoint, the full operator $\cL$ is not, due to the presence of the transport term $\cB_K(\cdot, h)$. This suggests the study of the \textsl{the principal eigenvalue} associated to $\cL + V$ utilizing the ideas of Berestycki, Nirenberg and Varadhan~\cite{BNV}. We exploit the characterization of the principal eigenvalue given by 
\begin{equation*}
\lambda_1 = \lambda_1(\cL + V, \Omega) = \sup \{ \lambda : \ \exists \ \phi > 0 \ \mbox{in} \ \Omega \ s.t. \ \cL \phi + V\phi \leq -\lambda \phi \ \mbox{in} \ \Omega \},
\end{equation*}
where the equation in the definition is satisfied in the weak sense. This definition allows to develop an iterative method leading to the existence of an eigenpair $(\phi_1, \lambda_1)$ with $\phi_1 > 0$ in $\Omega$, solving~\eqref{eqintro}. 

The main novelty here is the consideration of the transport term $\cB$. Under the assumption that $h$ is sufficiently smooth (say, Lipschitz continuous and bounded), the nonlocal term $\cB(\cdot, h)$ can be regarded as a nonlocal operator of ``order" $\max \{ 2s - 1, 0\}$. Then, by means of energy estimates coming from the ellipticity of $L_K$, we can solve a general Dirichlet problem which is the initial step to solve the eigenvalue problem. Here, uniqueness plays an important role, and noting that $\cB$ competes in terms of ellipticity with $L_K$, we need to impose an extra assumption on $h$, that is  
$$
\osc_{\R^N}(h) := \sup_{\R^N}(h) - \inf_{\R^N}(h) < 1.
$$ 

Such an assumption seems to be natural to keep the full operator $\cL$ ``degenerate elliptic". See the discussion about this in Remark~\ref{rmkcomparison}.

Once the existence of the principal eigenvalue is obtained, we provide a second min-max characterization for it, which is crucial in the proof of Theorem \ref{teo1}.

We exploit this min-max formula for the principal eigenvalue through a dual formulation that involves the operator
$$
I(\mu)=-\inf\limits_{\substack {u>0\\
		u\in C^{\infty}(\R^N)}}\int_{\R^N} \left(\frac{\cL u}{u}\right)d\mu,
$$
where $\mu$ is a probability measure in $\R^N$. Basically, if the principal eigenvalues of two operators $\cL_1, \cL_2$ coincide in the sense of Theorem~\ref{teo1} in every domain, then the associated operators $I_1, I_2$ must coincide for each $\mu$. Thus, we prove the inverse problem by taking a sequence of measures $\mu = f_\lambda(x) dx$ where $f_\lambda$ is a density approaching Dirac deltas as the parameter $\lambda \to 0$. In doing so, we require to obtain a ``first order expansion" of the operator $I$ and control the error terms, see Lemmas~\ref{lemaerrornolocal} and~\ref{errorpositivo} below. In this task, we mention here that the nonlocal nature of the operator makes the problem significantly different from the second-order case treated in~\cite{DV}, and new ideas must be brought into the analysis.

\medskip

The paper is organized as follows. In Section~\ref{secweak} we present the notion of weak solution, prove the maximum principle and the existence of the principal eigenvalue, as well as its min-max characterization. In Section~\ref{Secsindrift} we study the case without nonlocal transport term, which sheds some light about how to deal with the general operator in Section~\ref{secmainteo}, where we prove our main Theorem~\ref{teo1}. The last section is an Appendix, where we prove some technical results used in the article, and that we present them for completeness.

\section{The eigenvalue problem.}
\label{secweak}

Throughout this section we assume~\eqref{eliptic} holds.

For $\Omega \subseteq \R^N$, $p \geq 1$ and $\sigma \in (0,1)$, we define
$$
W_p^\sigma(\Omega) = \Big{\{}  u \in L^p(\Omega) : [u]_{W_p^\sigma(\Omega)} := \iint_{\Omega \times \Omega} \frac{|u(y) - u(x)|^p}{|x - y|^{N + \sigma p}} dx dy < +\infty \Big{\}},
$$
and we consider the norm $\| u \|_{W_p^\sigma(\Omega)} = \| u \|_{L^p(\Omega)} + [u]_{W_p^\sigma(\Omega)}$. We consider $W_p^\sigma(\Omega)$ the functions $u \in L^p(\Omega)$ making $\| u \|_{W_p^\sigma(\Omega)} < + \infty$. 

Since the Dirichlet problem is at scope, we require the following space $\mathbb W^{\sigma, p}(\Omega)$ the space of measurable functions $u : \R^N \to \R$ such that $u|_\Omega \in L^p(\Omega)$ and that
$$
(x,y) \mapsto \frac{|u(x) - u(y)|^p}{|x - y|^{N + \sigma p}} \in L^1(Q),  
$$
where $Q = \R^{2N} \setminus (\Omega^c \times \Omega^c) = \Omega \times \R^N \cup \Omega^c \times \Omega$. This is a Banach space endowed with the norm
\begin{equation*}
\| u \|_{\mathbb W^{\sigma, p}(\Omega)}^p = \| u \|_{L^p(\Omega)}^p + [u]_{\mathbb W^{\sigma, p}(\Omega)}^p := \| u \|_{L^p(\Omega)}^p + \iint_Q \frac{|u(x) - u(y)|^p}{|x - y|^{N + \sigma p}} dydx.
\end{equation*}

We clearly have $\mathbb W^{\sigma, p}(\Omega) \subset W^{\sigma, p}(\Omega)$.

In the case $p=2$ we write $\mathbb H^{\sigma}(\Omega)$. We write $\mathbb H_0^\sigma(\Omega)$ the completion of $C_c^1(\Omega)$ with the norm in $\mathbb H^\sigma(\Omega)$. It is a Hilbert space with the inner product
\begin{equation*}
(u,v)_{\mathbb H_0^\sigma(\Omega)} = (u,v)_{L^2(\Omega)} + \iint_Q \frac{(u(x) - u(y))(v(x) - v(y))}{|x - y|^{N + 2 \sigma}} dydx
\end{equation*}

For $\sigma\geq 1$, all the above definitions can be extended using weak derivatives, see~\cite{RS}.

In the Dirichlet problem we will study the standard framework is to work in the space $\mathbb H_0^s(\Omega)$. We denote $2_s^* = 2N/(N - 2s)$ the fractional Sobolev critical exponent.

\subsection{Weak solutions.}
In this first part, we are going to consider $\Omega \subset \R^N$ a bounded smooth domain and Dirichlet problems of the form
\begin{align}\label{Dirichlet}
\cL u + Vu = f \quad \mbox{in} \ \Omega; \qquad u = 0 \quad \mbox{in} \ \Omega^c,
\end{align}
where $f \in L^2(\Omega)$, $h \in H^s(\R^N) \cap L^\infty(\R^N)$ and $V \in L^\infty(\Omega)$.

For simplicity of the notation, we omit the index $K$ in $L$ and $\cB$.

We look for weak solutions to~\eqref{Dirichlet}, whose precise notion is provided in the following
\begin{definition}(Weak Solutions)
	We say that $u \in \mathbb H_0^s(\Omega)$ is a weak subsolution to the Dirichlet problem~\eqref{Dirichlet} if
	\begin{equation*}
	\int_{\R^{N}} \cB(u,v) dx - \int_{\R^N} \cB(h,u) vdx + \int_{\R^N} Vuvdx \leq -\int_{\R^N} fvdx,
	\end{equation*}
	for all $v \in C^1_c(\Omega)$ with $v \geq 0$. In analogous way it can be defined weak supersolution and a solution.
\end{definition}

The expression above make sense since $\cB(u,h) \in L^2(\Omega)$. Our first result is a very standard application of Lax-Milgram Theorem in the case $h$ is regular enough.
\begin{proposition}\label{propexistence}
	Let $\Omega$ be a bounded domain, $h \in L^\infty(\R^N) \cap \mathrm{Lip}(\R^N)$ and $V \in L^\infty(\Omega)$. There exists $C_0 \in \R$ such that, for every $C \geq C_0$, and for every $f \in L^2(\Omega)$, there exists a unique weak solution to the problem
	\begin{equation*}
	\cL u + Vu - Cu = f \quad \mbox{in} \ \Omega, \qquad u = 0 \quad \mbox{in} \ \Omega^c.
	\end{equation*}
\end{proposition}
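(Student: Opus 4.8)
The plan is to set up the bilinear form associated with the operator $\cL + V - C$ on the Hilbert space $\mathbb H_0^s(\Omega)$ and verify the hypotheses of the Lax–Milgram theorem. Concretely, for $u, v \in \mathbb H_0^s(\Omega)$ I would define
\[
a(u,v) = \int_{\R^N} \cB(u,v)\, dx - \int_{\R^N} \cB(h,u)\, v\, dx + \int_{\R^N} (V - C) u v\, dx,
\]
and let $F(v) = -\int_{\R^N} f v\, dx$, which is clearly a bounded linear functional on $\mathbb H_0^s(\Omega)$ since $f \in L^2(\Omega)$ and $\mathbb H_0^s(\Omega) \hookrightarrow L^2(\Omega)$. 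A weak solution is then exactly a $u$ with $a(u,v) = F(v)$ for all $v$ in a dense subspace, so the result follows once $a$ is shown to be bounded and coercive.

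First I would check boundedness of $a$. The term $\int \cB(u,v)\,dx$ is controlled by $\Gamma [u]_{\mathbb H^s}[v]_{\mathbb H^s}$ using \eqref{eliptic} and Cauchy–Schwarz on $\Omega \times \R^N \cup \Omega^c \times \Omega$. For the transport term, the key point — already flagged in the introduction — is that $\cB(h,\cdot)$ behaves like an operator of order $\max\{2s-1,0\}$ when $h$ is Lipschitz and bounded: splitting the defining integral into $|x-y|<1$ and $|x-y|\geq 1$, on the near region one uses $|h(y)-h(x)| \leq \mathrm{Lip}(h)|x-y|$ to gain a power of $|x-y|$, and on the far region one uses $\|h\|_\infty$; this yields $\|\cB(h,u)\|_{L^2(\Omega)} \leq C(\|h\|_\infty, \mathrm{Lip}(h), N, s)\, \|u\|_{\mathbb H^s}$, so $\left|\int \cB(h,u) v\,dx\right| \lesssim \|u\|_{\mathbb H^s}\|v\|_{L^2}$. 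The potential term is bounded by $(\|V\|_\infty + |C|)\|u\|_{L^2}\|v\|_{L^2}$. Hence $a$ is bounded on $\mathbb H_0^s(\Omega) \times \mathbb H_0^s(\Omega)$.

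For coercivity, I would estimate $a(u,u)$ from below. The diffusion term gives $\int \cB(u,u)\,dx \geq \frac{\gamma}{2}[u]_{\mathbb H^s}^2$ (up to the normalizing constant absorbed into $\gamma$), and via the fractional Poincaré inequality on the bounded domain $\Omega$ this also dominates $c\|u\|_{L^2(\Omega)}^2$, so in fact $\int \cB(u,u)\,dx \geq c_1 \|u\|_{\mathbb H_0^s(\Omega)}^2$. The transport term $\int \cB(h,u) u\, dx$ is the dangerous one: by the order-$\max\{2s-1,0\}$ bound above and Young's inequality it is controlled by $\e [u]_{\mathbb H^s}^2 + C_\e \|u\|_{L^2}^2$ for any $\e>0$; choosing $\e$ small enough relative to $\gamma$ absorbs the gradient part into the diffusion term. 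Finally the term $-C\|u\|_{L^2}^2$, for $C$ sufficiently large (this is where $C_0$ enters), dominates the leftover $+\|V\|_\infty\|u\|_{L^2}^2$ and the $C_\e\|u\|_{L^2}^2$ coming from the transport estimate. Altogether $a(u,u) \geq \frac{c_1}{2}\|u\|_{\mathbb H_0^s(\Omega)}^2$ once $C \geq C_0$, and Lax–Milgram delivers a unique $u \in \mathbb H_0^s(\Omega)$ with $a(u,v)=F(v)$ for all $v$, hence in particular for all $v \in C_c^1(\Omega)$.

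The main obstacle is the transport term $\cB(h,u)$: one must prove the $L^2$-boundedness statement that it is an operator of order $\max\{2s-1,0\}$, which requires care in the regime $s \geq 1/2$ where only the boundedness of $h$ (not its Lipschitz bound) prevents a logarithmic or divergent loss, and then the absorption argument must be quantitative enough that the thresholds on $\e$ and $C_0$ depend only on $\gamma$, $\|h\|_\infty$, $\mathrm{Lip}(h)$, $\|V\|_\infty$, $N$, $s$ and $\Omega$. Note that at this stage the hypothesis $\osc h < 1$ is \emph{not} needed — Lax–Milgram gives existence and uniqueness for $C$ large regardless — the oscillation condition only becomes relevant later for the comparison principle and for uniqueness at $C = 0$; see Remark~\ref{rmkcomparison}.
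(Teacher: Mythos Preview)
Your proposal is correct and follows essentially the same route as the paper: set up the bilinear form, apply Lax--Milgram, with the only nontrivial step being to show that the transport term $\int \cB(h,u)\,u\,dx$ is bounded by $\epsilon[u]_{\mathbb H^s}^2 + C_\epsilon\|u\|_{L^2}^2$ using the Lipschitz regularity of $h$, Cauchy--Schwarz, and Young's inequality, then absorb via small $\epsilon$ and large $C_0$. The paper estimates $\int \cB(h,u)\,u\,dx$ directly (splitting the $y$-integration over $\Omega$ and $\Omega^c$) rather than first bounding $\|\cB(h,u)\|_{L^2}$ as you do, but this is a cosmetic difference; your observation that $\osc h < 1$ is not needed here is also correct.
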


\begin{proof}
	For $u, v \in \mathbb H_0^s(\Omega)$, we define the bilinear form
	\begin{equation*}
	M(u,v) = \int_{\R^{N}} \cB(u,v) dx - \int_{\R^N} \cB(h,u) vdx + \int_{\R^N} (C - V)uvdx.
	\end{equation*}
	
	%%	We consider $H^s_0(\Omega)$ endowed with the norm
	%%	$$
	%%	|u|_{H^s(\Omega)}^2 = |u|_{L^2(\Omega)}^2 + \int \int_{Q} (u(y) - u(x))^2|x - y|^{-(N + 2s)}dydx,
	%%	$$
	
	We start assuming $C > \| V \|_\infty$. The proof follows along classical lines as soon as we can control the transport term to conclude the coercivity of $M$ and use Lax-Milgram Theorem. In fact, we see that for $u \in \mathbb H_0^s(\Omega)$ we have
	\begin{align*}
	\int_{\R^N} \cB(h,u) udx = & \int_\Omega \int_{\Omega} (u(y) - u(x))(h(y) - h(x))K(x, y)dy u(x)dx \\
	& - \int_\Omega \int_{\Omega^c}(h(y) - h(x))K(x, y)dy u^2(x)dx \\
	=: & B_1 + B_2. 
	\end{align*}
	
	For $B_2$ we set $\epsilon_0 > 0$ small and $\delta_0 > 0$ such that if $|x - y| \leq \delta_0$, then $|h(x) - h(y)| \leq \epsilon_0$. Thus, we see that
	\begin{align*}
	B_2 \leq & \epsilon_0 \int_\Omega \int_{\Omega^c \cap B_{\delta_0}(x)} K(x, y)dy u^2(x)dx + C C\delta_0^{1 - 2s}  \int_\Omega u^2(x)dx \\
	\leq & \epsilon_0 [u]_{\mathbb H_0^s(\Omega)}^2 + C\delta_0^{1-2s} |u|_{L^2}^2.
	\end{align*}
	
	On the other hand, we see that
	\begin{align*}
	B_1 \leq & \mathrm{Lip}(h) \int_\Omega \int_{\Omega} |u(y) - u(x)| |x - y|K(x, y)dy u(x) dx \\
	\leq & C \mathrm{Lip}(h) \int_{\Omega} \Big{(}\int_{\Omega} \frac{|u(y) - u(x)|^2}{|x, y|^{N + 2s}} dy  \Big{)}^{1/2} \Big{(}\int_{\Omega} |x - y|^{2 -N - 2s} dy  \Big{)}^{1/2} u(x) dx \\
	\leq & C \mathrm{Lip}(h) \int_{\Omega} \Big{(}\int_{\Omega} \frac{|y(y) - u(x)|^2}{|x - y|^{N + 2s}} dy  \Big{)}^{1/2} u(x) dx,
	\end{align*}
	and using Young's inequality, we conclude that
	$$
	B_1 \leq \epsilon_0 [u]_{\mathbb H_0^s(\Omega)}^2 + C \epsilon_0^{-1} |u|_{L^2(\Omega)}^2, 
	$$
	and from this point, using the above estimates, we conclude the coercivity by taking $\epsilon_0$ small and $C_0$ large.
\end{proof}

\medskip

The proper term $C$ in~\eqref{Dirichlet} also plays an important role in the Maximum Principle presented next
\begin{proposition}\label{maxp}
	Assume
	\begin{equation}\label{h0}
	\osc_{\R^N}(h) =: h_0 < 1,
	\end{equation}
	and $C \geq 0$. If $u \in \mathbb H_0^s(\Omega)$ is a weak subsolution to
	\begin{equation*}
	\cL u - Cu\geq 0\quad \mbox{in} \ \Omega;  \qquad u = 0 \quad \mbox{in} \ \Omega^c, 
	\end{equation*}	
	then $u \leq 0$ in $\Omega$.
\end{proposition}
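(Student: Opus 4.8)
\emph{Strategy.} I would test the weak (sub)solution inequality against $v=u^{+}$ and derive a coercive estimate for $u^{+}$ in $\mathbb H_0^{s}(\Omega)$. First one must check that $u^{+}$ is an admissible test function: since $|u^{+}(x)-u^{+}(y)|\le|u(x)-u(y)|$ and $u^{+}=0$ on $\Omega^{c}$, we have $u^{+}\in\mathbb H_0^{s}(\Omega)$, and because $h\in L^{\infty}(\R^{N})$ every term in the definition of weak subsolution extends continuously to $\mathbb H_0^{s}(\Omega)$ (in particular $\cB(h,u)\in L^{2}(\Omega)$), so $u^{+}$ may be used as a test function by density of $C^1_c(\Omega)$ in $\mathbb H_0^{s}(\Omega)$. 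Using it, together with $u\,u^{+}=(u^{+})^{2}$, yields
\[
\int_{\R^{N}}\cB(u,u^{+})\,dx-\int_{\R^{N}}\cB(h,u)\,u^{+}\,dx+C\int_{\R^{N}}(u^{+})^{2}\,dx\le 0 .
\]

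\emph{Where \eqref{h0} enters.} Note that $\cL u(x)=\int_{\R^{N}}(u(y)-u(x))\,\widetilde K(x,y)\,dy$ with the (non-symmetric) kernel $\widetilde K(x,y):=\bigl(1+\tfrac12(h(y)-h(x))\bigr)K(x,y)$, whose symmetric part in $(x,y)$ is exactly $K$. By \eqref{h0} one has $\tfrac12<1+\tfrac12(h(y)-h(x))<\tfrac32$, so $\widetilde K\ge 0$ and $\tfrac12K\le\widetilde K\le\tfrac32K$; in particular $\widetilde K$ obeys a two-sided bound of the form \eqref{eliptic}. Combining the elementary pointwise inequality $(u(y)-u(x))\,u^{+}(x)\le(u^{+}(y)-u^{+}(x))\,u^{+}(x)$ (an equality where $u^{+}(x)=0$, while otherwise $u(x)=u^{+}(x)$ and $u(y)\le u^{+}(y)$) with $\widetilde K\ge 0$, and symmetrizing the resulting double integral in $x\leftrightarrow y$, the inequality above becomes
\[
\int_{\R^{N}}\cB_{K}(u^{+})\,dx+C\int_{\R^{N}}(u^{+})^{2}\,dx\;\le\;\mathcal R,\qquad
\mathcal R:=\frac14\iint_{Q}\bigl(u^{+}(y)^{2}-u^{+}(x)^{2}\bigr)\bigl(h(y)-h(x)\bigr)\,K(x,y)\,dy\,dx .
\]

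\emph{The transport remainder --- main obstacle.} It remains to show that $\mathcal R$ cannot overcome the coercive term $\int_{\R^{N}}\cB_{K}(u^{+})\,dx\ge\frac{\gamma c_{N,s}}{2}[u^{+}]_{\mathbb H_0^{s}(\Omega)}^{2}$, and this is precisely where the strict bound $\osc_{\R^{N}}h<1$ is used quantitatively. Writing $\mathcal R=\tfrac14\iint_{Q}(u^{+}(y)-u^{+}(x))(u^{+}(y)+u^{+}(x))(h(y)-h(x))K$ and using $|h(y)-h(x)|\le\osc_{\R^{N}}h<1$ together with the symmetry of $K$ (and the boundedness/regularity of $h$ to treat the near-diagonal region, where $h(y)-h(x)$ cannot simply be pulled out of the singular integral), a Cauchy--Schwarz/Young argument should give, for every $\varepsilon>0$, an estimate $\mathcal R\le\varepsilon[u^{+}]_{\mathbb H_0^{s}(\Omega)}^{2}+C_{\varepsilon}\|u^{+}\|_{L^{2}(\Omega)}^{2}$; absorbing the first term into $\int_{\R^{N}}\cB_{K}(u^{+})\,dx$ and handling the $L^{2}$ term with the fractional Poincar\'e inequality on $\mathbb H_0^{s}(\Omega)$ (this is where $\osc_{\R^{N}}h<1$ keeps the resulting constant strictly admissible) forces $[u^{+}]_{\mathbb H_0^{s}(\Omega)}=0$. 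Hence $u^{+}$ is constant, and since $u^{+}=0$ on $\Omega^{c}$ we conclude $u^{+}\equiv0$, i.e.\ $u\le 0$ in $\Omega$. The hardest step is the quantitative control of $\mathcal R$: the competition between the transport term and the diffusion is genuine, and it is the condition $\osc_{\R^{N}}h<1$ (equivalently, the uniform ellipticity of $\widetilde K$) that tips the balance in favour of the diffusion.
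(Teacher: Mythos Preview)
Your reduction is clean and the algebra up to the display
\[
\int_{\R^{N}}\cB_{K}(u^{+})\,dx+C\int_{\R^{N}}(u^{+})^{2}\,dx\;\le\;\mathcal R
\]
is correct: the observation that $\widetilde K=(1+\tfrac12(h(y)-h(x)))K\ge 0$ under~\eqref{h0}, together with the pointwise inequality $(u(y)-u(x))u^{+}(x)\le(u^{+}(y)-u^{+}(x))u^{+}(x)$, is exactly the right way to pass from $u$ to $u^{+}$, and the symmetrization producing $\mathcal R$ is fine.

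The gap is the closing step. An estimate of the form $\mathcal R\le\varepsilon[u^{+}]^{2}+C_{\varepsilon}\|u^{+}\|_{L^{2}}^{2}$ (which is indeed obtainable by the Cauchy--Schwarz/Young argument you sketch, using the Lipschitz regularity of $h$ near the diagonal) only yields
\[
\Big(\tfrac{\gamma c_{N,s}}{2}-\varepsilon\Big)[u^{+}]^{2}\le C_{\varepsilon}\|u^{+}\|_{L^{2}}^{2},
\]
and the fractional Poincar\'e inequality $\|u^{+}\|_{L^{2}}^{2}\le C_{P}[u^{+}]^{2}$ points in the \emph{same} direction. Combining them gives $(\tfrac{\gamma c_{N,s}}{2}-\varepsilon)[u^{+}]^{2}\le C_{\varepsilon}C_{P}[u^{+}]^{2}$, which is vacuous unless $C_{\varepsilon}C_{P}<\tfrac{\gamma c_{N,s}}{2}-\varepsilon$. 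There is no mechanism for this: $C_{\varepsilon}$ depends on $\mathrm{Lip}(h)$ and blows up as $\varepsilon\to 0$, while $C_{P}$ depends on $\Omega$; the condition $\osc_{\R^{N}}h<1$ does not enter these constants at all (you have already spent it to make $\widetilde K\ge 0$). Equivalently, $\mathcal R=-\tfrac12\int (u^{+})^{2}L_{K}h$, so the best pointwise bound is $|\mathcal R|\le \tfrac12\|L_{K}h\|_{\infty}\|u^{+}\|_{L^{2}}^{2}$, which again cannot be played against Poincar\'e.

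The paper closes the argument differently: instead of testing only with $u^{+}$, it tests with the whole family $v_{l}=(u-l)^{+}$ for $0\le l<\sup_{\Omega}u$ (in the spirit of Theorem~8.1 of Gilbarg--Trudinger). The oscillation bound $h_{0}<1$ is used to absorb a nonnegative ``cross'' term $I_{12}$ arising from the region $\{u>l\}\times\{u\le l\}$, yielding
\[
[v_{l}]_{H^{s}_{K}(\{u>l\})}\le C\,\|v_{l}\|_{L^{2}(\{u>l\})}.
\]
Now the fractional \emph{Sobolev} inequality (not Poincar\'e) and H\"older give $|\{u>l\}|\ge c>0$ with $c$ independent of $l$; letting $l\uparrow\sup_{\Omega}u$ forces the set of maxima to have positive measure while $v_{l}\to 0$ there, a contradiction. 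The missing idea in your argument is precisely this level-set (De~Giorgi) iteration; a single test with $u^{+}$ cannot conclude.
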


\begin{proof}
	Here we follow the directions of Theorem 8.1 in~\cite{GT}.
	Assume that $u > 0$ at some point in $\Omega$, and let $0 \leq l < \sup_{\Omega} \{ u \}$. Let $v = (u - l)^+ \in \mathbb H_0^s(\Omega)$ and $v \geq 0$. Then, the weak formulation of the problem leads to
	\begin{equation}\label{testineq}
	\int \cB(u,v) dx \leq \int \cB(h,u) vdx,
	\end{equation}
	where we have assumed that $C = 0$ which is the less favorable case.
	
	Now we estimate each term separately. By definition of $v$, we see that
	\begin{align}\label{difusion}
	\int_{\R^{N}} \cB(u,v) dx = 2 I_0 + I_1,
	\end{align}
	where 
	\begin{align*}
	I_0 := & \int_{\Omega} u(x) v(x) \int_{\Omega^c} K(x,y)dy dx \\
	I_1 := & \int_{\Omega^2} (u(y) - u(x))(v(y) - v(x))K(x,y)dydx.
	\end{align*}
	
	Notice that $I_0 \geq 0$. 
	
	For $I_1$, a new splitting of the integral drives us to
	\begin{equation*}
	I_1 = I_{11} + 2I_{12}
	\end{equation*} 
	where 
	\begin{equation}\label{I12}
	\begin{split}
	I_{11} = & \int_{\{ u(x) > l \}} \int_{\{ u(y) > l \}} (u(y) - u(x))(v(y) - v(x)) K(x,y)dy dx \\
	= & \int_{\{ u(x) > l \}} \int_{\{ u(y) > l \}} (v(y) - v(x))^2 K(x,y)dy dx, \\
	I_{12} = & -\int_{\{ u(x) > l \}} v(x) \int_{\{ u(y) \leq l \}} (u(y) - u(x))K(x,y)dydx,
	\end{split}
	\end{equation}
	and where for $\{ u(x) > l\}$ we mean $\{ x \in \Omega : u(x) > l\}$. Notice that both terms are nonnegative.
	
	For a measurable set $A \subset \R^N$ we introduce the notation
	$$
	[v]_{H_K^s(A)}^2 := \int_{A} \int_{A} (v(y) - v(x))^2 K(x,y)dy dx.
	$$
	
	Noticing that $I_{11} = [v]_{H_K^s(u > l)}^2$, from~\eqref{difusion} we conclude that
	\begin{equation}\label{hola}
	\int \cB(u,v)dx \geq 2I_0 + I_{12} + [v]_{H_K^s(u > l)}^2.
	\end{equation}
	
	\medskip

	Now we deal with the nonlocal transport term. We see that
	\begin{align*}
	\int_{\R^N} \cB(h,u) vdx = & \frac{1}{2} \int_{\Omega} \int_{\R^N} (u(y) - u(x))(h(y) - h(x))K(x, y)dy v(x)dx \\
	= & -\frac{1}{2} \int_{\{ u(x) > l \}} u(x) v(x) \int_{\Omega^c} (h(y) - h(x))K(x, y)dy dx \\
	& + \frac{1}{2} \int_{\{ u(x) > l \}} v(x) \int_{\Omega} (u(y) - u(x))(h(y) - h(x))K(x, y)dy dx \\
	=: & B_0 + B_1.
	\end{align*}
	
	Using~\eqref{h0} we deduce
		\begin{align}\label{B0}
		B_0 \leq & \int_{\{ u(x) > l \}} u(x) v(x) \int_{\Omega^c}K(x, y)dy dx = I_0%+ C\delta_0^{1 - 2s} \int_{\{ u(x) > l \}} u(x) v(x) dx,
		\end{align}
		%from which we conclude that
		%\begin{equation}\label{B0}
		%B_0 \leq \frac{\epsilon_0}{2} I_0 + C\delta_0^{1 - 2s} \int_{\{ u(x) > l \}} u(x) v(x) dx,
		%\end{equation}
		%where $C$ just depends on $\Gamma, N, s$.
		
		Proceeding in a similar way for the term $B_1$, we get
		\begin{align*}
		B_1 = & \int_{\{ u(x) > l \}} v(x) \int_{\{ u(y) > l \}} (u(y) - u(x)) (h(y) - h(x)) K(x, y)dy dx \\
		& + \int_{\{ u(x) > l \}} v(x) \int_{\{ u(y) \leq l \}} (u(y) - u(x)) (h(y) - h(x)) K(x, y)dy dx \\
		=: & B_{11} + B_{12}.
		\end{align*}
		
		Let us estimate $B_{12}$. From \eqref{h0} we have
		\begin{equation}\label{B12}
		B_{12} \leq h_0 \int_{\{ u(x) > l \}} v(x) \int_{\{ u(y) \leq l \}} |u(y) - u(x)| K(x, y)dy dx \leq h_0 I_{12},
		\end{equation}
		where $I_{12}$ is defined in~\eqref{I12}.
	
	For $B_{11}$ we use H\"older inequality to get
	\begin{align*}
	B_{11} \leq & C_h \int_{\{ u(x) > l \}} v(x) \int_{\{ u(y) > l \}} |v(y) - v(x)| |x - y| K(x, y)dy dx \\
	\leq & C_h \int_{\{ u(x) > l \}} v(x) \Big{(}\int_{\{ u(y) > l \}} |v(y) - v(x)|^2K(x, y) dy  \Big{)}^{1/2} \Big{(}\int_{\{ u(y) > l \}} |x - y|^{2} K(x, y) dy  \Big{)}^{1/2} dx \\
	\leq & C \int_{\{ u(x) > l \}} v(x) \Big{(}\int_{\{ u(y) > l \}} |v(y) - v(x)|^2K(x, y) dy  \Big{)}^{1/2} dx,
	\end{align*}
	where $C > 0$ depends on $h$, $\Gamma$ and $\Omega$. Using H\"older again, we conclude
	\begin{equation}\label{B11}
	B_{11} \leq C \| v \|_{L^2(u>l)} [v]_{H^s_K(u>l)}.
	\end{equation}
	
	%For $B_{12}$ we have
	%\begin{align*}
	%B_{12} \leq & \epsilon_0\int_{\{ u(x) > l \}} v(x) \int_{\{ u(y) \leq l \} \cap B_{\delta_0}(x)} |u(y) - u(x)| K(x - y)dy dx \\
	%& {\red + \int_{\{ u(x) > l \}} v(x) \int_{\{ u(y) \leq l \} \cap B_{\delta_0}^c(x)} (u(y) - u(x)) (h(y) - h(x)) K(x, y)dy dx} \\
	%\leq & \epsilon_0 I_{12} + {\red B_{121}}.
	%\end{align*}
	
	From \eqref{B0},~\eqref{B12} and~\eqref{B11} we conclude that
	\begin{equation*}\label{hola1}
	\int \cB(h,u) vdx \leq h_0 I_{12} + I_0 + C \| v \|_{L^2(u>l)} [v]_{H_K^s(u>l)}
	\end{equation*}
	
	Then, replacing this last estimate and~\eqref{hola} into~\eqref{testineq}, the condition $h_0 < 1$ allows us to absorb the term $I_{12}$ to conclude that
	\begin{equation*}
	[v]_{H^s(u > l)}^2 \leq C \| v \|_{L^2(u>l)} [v]_{H^s(u>l)}.
	\end{equation*}
	
	%{\red Asumiendo que podemos controlar $B_{121}$ o absorberlo con el termino propio, llegamos a
	%	$$
	%	[v]_{H^s(u > l)}^2 \leq C \| v \|_{L^2(u>l)} [v]_{H^s(u>l)},
	%	$$
	%	y de ac\'a se sigue la demostraci\'on de la parte final de~\cite{GT}, p\'agina 180...} 
	
	We use the fractional Sobolev inequality to deduce
	\begin{equation*}
	|v|_{L^{\frac{2N}{N - 2s}}(u > l)} \leq C |\{ u > l \}|^{1/N} |v|_{L^{\frac{2N}{N - 2s}}(u > l)},
	\end{equation*}
	from which we conclude that the function $v = v_l$ satisfies $|\{ v > 0 \}| \geq C > 0$, with $C$ independent of $l$. If $\sup_\Omega u = +\infty$, then we arrive at a contradiction with the integrability of $v$. Assuming that $\sup_\Omega u < +\infty$, we conclude that the set of maxima of $u$ has a positive measure and it is contained in the support of each $v_l$, that vanishes. This leads to a contradiction.
\end{proof}

%%%%%%%%%%%%%%%%%%%%%%%%%%%%%%%%%%%%%%%%%%%%%%%%

\medskip

\begin{remark}\label{rmkcomparison}
	It is possible to find $h$ smooth and bounded such that the ``purely integro-differential" operator
	$$
	\Delta^s u + \cB (h,u)
	$$
	does not satisfy the maximum principle. In fact, let $u(x) = (1 - x^2)_+^{1 + s}$, $x \in \R$. Then, using the explicit computations in~\cite{D}, we have
	$$
	-\Delta^s u(x) = c(1 - (1 + 2s)x^2)_+, \quad x \in (-1,1).
	$$
	
	Let $h: \R \to \R_+$ be a smooth and bounded function such that $h = 0$ in $(-1,1)$. Then, we see that
	\begin{align*}
	B(h,u)(x) = &  \int_{\R} (u(y) - u(x))(h(y) - h(x)) |x - y|^{-(1 + 2s)}dy \\
	= & - u(x) \int_{[-1,1]^c} h(y) |x - y|^{-(1 + 2s)}dy.
	\end{align*}
	
	Thus, for $x$ such that $1 - (1 + 2s)x^2 \leq 0$ we have
	$$
	-\Delta^s u(x) + B(h,u)(x) \leq 0.
	$$
	
	On the other hand, if $1 > (1 + 2s)x^2$ then there exists $a_s > 0$ such that $u(x) \geq a_s$ for those $x$. Then, for these points $x$ we have
	$$
	-\Delta^s u(x) + B(h,u)(x) \leq c - a_s \int_{[-1,1]^c} h(y) |x - y|^{-(1 + 2s)}dy,
	$$
	and we can take $h$ large enough outside $(-1,1)$ in order to get 
	$$
	-\Delta^s u(x) + B(h,u)(x) \leq 0 \quad \mbox{en} \ (-1,1),
	$$
	with $u = 0$ in $(-1,1)^c$. But $u(0) > 0$ and maximum principle fails.
\end{remark}

%\subsection{From weak to classical solutions.}
The following result is a consequence of H\"older inequality and available interior regularity estimates.
%\begin{lemma}\label{trivlema}
%Let $\Omega$ be a bounded domain, $p \in (1, +\infty)$, $q$ its H\"older conjugate. For $s \in (1/2,1)$, let $2s - 1 < \sigma < 2s$. Then, if $u \in W^{\sigma, p}(\Omega)$ and $h \in W^{2s - \sigma}(\Omega)$, then $\cB(u,h) \in L^p(\Omega)$ and
%\begin{equation*}
%\| \cB(u,h) \|_{L^p} \leq C [u]_{W^{p, \sigma}} [h]_{W^{q, 2s - \sigma}},
%\end{equation*}	
%for some constant $C > 0$ depending on the dimension and $\Omega$ and $p$.
%\end{lemma} 
%This result in junction with previous available results leads to the following interpolation result
\begin{lemma}\label{lemaLp}
	Let $\Omega$ be a bounded domain, $f \in L^r(\Omega)$ with $r \geq 2$. Let $h \in C^{\alpha}_b(\R^N)$ for some $\alpha > 0$. If $u \in \mathbb H_0^{s}(\Omega)$ is a weak solution to~\eqref{Dirichlet}, then $u \in W_{loc}^{s,p}(\Omega)$ for each $p$ satisfying 
	$
	2 < p < \min \{ r, 2_s^*, p_\alpha \} 
	$
	with 
	$$
	p_\alpha = \frac{2N}{N - 2(2s - \max\{ 0, 2s - \alpha - 1 \})}.
	$$
	%	\begin{equation}
	%	L_K u + \cB_K (h,u) - Cu = f \quad \mbox{in} \ \Omega; \quad u = 0 \ \mbox{in} \ \Omega^c,
	%	\end{equation}
\end{lemma}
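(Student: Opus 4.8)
The strategy is a bootstrap argument: we start with $u \in \mathbb H_0^s(\Omega)$ (so $u \in L^{2_s^*}_{loc}$ by the fractional Sobolev embedding) and improve the integrability by treating the equation $L_K u = f - Vu - \cB(h,u)$ as a fractional Poisson-type equation with right-hand side in some $L^q_{loc}$, then invoking the available interior regularity estimates for $L_K$ (of the Caffarelli–Silvestre / Ros-Oton–Serra type) to gain $W^{s,p}_{loc}$ regularity for $u$. The key point is to identify the integrability of the forcing term $g := f - Vu - \cB(h,u)$. Since $f \in L^r(\Omega)$ and $V \in L^\infty(\Omega)$, the terms $f$ and $Vu$ contribute integrability $\min\{r, 2_s^*\}$ (the latter from $u \in L^{2_s^*}_{loc}$). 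The nontrivial term is $\cB(h,u)$, and the heart of the argument is the claim that, when $h \in C^\alpha_b(\R^N)$, one has $\cB(h,u) \in L^{p_0}_{loc}(\Omega)$ for $p_0$ related to the integrability of $u$ with a loss of $\max\{0, 2s - \alpha - 1\}$ derivatives' worth of integrability — which is exactly what produces the exponent $p_\alpha$ in the statement.

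First I would establish the mapping property of $\cB(h,\cdot)$. Writing
$$
\cB(h,u)(x) = \frac{1}{2}\int_{\R^N} (h(y)-h(x))(u(y)-u(x)) K(x,y)\,dy,
$$
I split the integral into $|x-y| \le 1$ and $|x-y| > 1$. On the far region $|x-y|>1$, using $h \in L^\infty$, the kernel bound \eqref{eliptic}, and $u \in L^{2_s^*}_{loc} \cap \mathbb H_0^s$, one controls this contribution in $L^p_{loc}$ for every $p < 2_s^*$ by Minkowski's integral inequality. On the near region, one uses $|h(y)-h(x)| \le C|x-y|^\alpha$ together with the kernel bound, so that the effective kernel behaves like $|x-y|^{-(N + 2s - \alpha)}$ acting on the increment $u(y)-u(x)$; when $2s - \alpha - 1 \le 0$ this is an honest smoothing operator of nonnegative order and one gains integrability, while when $2s-\alpha-1>0$ it is a nonlocal operator of order $2s-\alpha-1 \in (0,1)$ applied to $u$, which by the fractional Sobolev-type / Hardy–Littlewood–Sobolev estimates maps $L^{2_s^*}_{loc}$ (or, at later stages of the bootstrap, the current space) into $L^{p}_{loc}$ with $p$ up to the Sobolev conjugate dictated by the order loss; this is precisely the origin of $p_\alpha$. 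Combining, $g \in L^{p_1}_{loc}$ with $p_1 < \min\{r, 2_s^*, p_\alpha\}$, and the interior estimate for $L_K$ then gives $u \in W^{2s, p_1}_{loc} \hookrightarrow W^{s, p_1}_{loc}$; since $2s > s$, this already closes the argument at the first step (no iteration is actually needed beyond bookkeeping the exponents), yielding $u \in W^{s,p}_{loc}(\Omega)$ for all $p$ in the stated range.

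I would carry out the steps in this order: (1) record the fractional Sobolev embedding $\mathbb H_0^s(\Omega) \hookrightarrow L^{2_s^*}(\Omega)$; (2) prove the mapping estimate $\cB(h,\cdot): L^q_{loc} \cap \mathbb H_0^s \to L^{q^\sharp}_{loc}$ with $q^\sharp$ computed from $\alpha$ and $s$ as above, via the near/far splitting; (3) conclude $g = f - Vu - \cB(h,u) \in L^{p_1}_{loc}$ for the appropriate $p_1$; (4) apply the interior regularity estimate for the equation $L_K u = g$ to obtain $u \in W^{s,p}_{loc}(\Omega)$. The main obstacle is Step (2): making rigorous the claim that $\cB(h,u)$, despite involving the "full-strength" kernel $K \sim |x-y|^{-(N+2s)}$, behaves like an operator of order only $\max\{0, 2s - \alpha - 1\}$ because the Hölder modulus of $h$ absorbs $\alpha$ powers and the increment structure $u(y)-u(x)$ absorbs one more power near the diagonal — and correctly tracking how this translates into the Sobolev-conjugate exponent $p_\alpha$ when $2s - \alpha - 1 > 0$. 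The far-field tail (where $u$ is only known to be in $\mathbb H_0^s$, i.e. vanishing outside $\Omega$) and the fact that the estimates are interior (so cutoff commutator terms with $L_K$ appear) are routine but must be handled with care.
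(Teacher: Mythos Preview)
Your overall architecture—move $\cB(h,u)$ to the right-hand side, estimate it in $L^p_{loc}$, then invoke interior regularity for $L_K$—matches the paper's. But Step~(2) as you describe it has a genuine gap. To bound the near-diagonal part of $\cB(h,u)$ in $L^p_{loc}$ for $p>2$, knowing only $u\in L^{2_s^*}\cap\mathbb H_0^s$ is not enough: the heuristic that ``the increment $u(y)-u(x)$ absorbs one more power'' has no content for a function that is merely in $L^{2_s^*}$, and the $\mathbb H_0^s$ information is $L^2$-based, so by itself it does not yield $L^p$ control of the singular integral for $p>2$. (Relatedly, after absorbing the $C^\alpha$ modulus of $h$ the near part looks like a fractional-Laplacian-type operator of order $2s-\alpha$, not $2s-\alpha-1$; the increment does not knock the order down by a further unit unless you already know differentiability of $u$.) A Hardy--Littlewood--Sobolev argument applied to $u\in L^{2_s^*}$ alone will not produce the exponent $p_\alpha$.

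The paper closes this gap with an extra bootstrap step you are missing. First one checks $\cB(h,u)\in L^2$ directly from $u\in \mathbb H_0^s$; then Cozzi's interior regularity for $L_K u=\tilde f\in L^2_{loc}$ gives $u\in H^{2s-\epsilon}_{loc}(\Omega)$; the embedding $H^{2s-\epsilon}\hookrightarrow W^{\sigma',p}_{loc}$ (with $2s-\epsilon-\tfrac N2=\sigma'-\tfrac Np$) then promotes $u$ to a \emph{fractional Sobolev} space $W^{\sigma',p}_{loc}$ with $p>2$. Only with this $W^{\sigma',p}$ regularity in hand does the paper estimate the near part of $\cB(h,u)$ in $L^p$, via a H\"older splitting that places the seminorm $[u]_{W^{\sigma',p}}$ on one factor and $[h]_{C^\alpha}$ on the other; the admissible range of $\sigma'$ relative to $\alpha$ is exactly what produces $p_\alpha$. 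A final application of the $L^p$ interior estimates for $L_K$ (from \cite{LPPS}) then yields $u\in W^{s,p}_{loc}$. In short: your plan needs the intermediate passage through $H^{2s-\epsilon}_{loc}\hookrightarrow W^{\sigma',p}_{loc}$ before the $L^p$ bound on $\cB(h,u)$ can go through.
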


\begin{proof}
	It is easy to see that by the assumptions on $h$ we have $\cB(h,u) \in L^2(\R^N)$ if $u$ is a weak solution to the problem. Then, we have $u$ is in $\mathbb H^s_0(\Omega) \cap H_{loc}^{2s - \epsilon}(\Omega)$ for each $\epsilon \in (0,1)$ by the result of Cozzi~\cite{C}. By Sobolev inequality, we have $u \in L^p(\Omega)$ for all $p \leq 2_s^*$.
	
Recall the following embeddings
	\begin{equation*}
	W^{\sigma}_r 
	%= F_{r,r}^\sigma 
	\hookrightarrow 
	%F_{p, p}^{\sigma'} = 
	W_{p}^{\sigma'},
	\end{equation*}
	with $\sigma' < \sigma$ and $r < p$ satisfying
	\begin{equation}\label{embed}
	\sigma - \frac{N}{r} = \sigma' - \frac{N}{p},
	\end{equation} 
	see Runst and Sickel~\cite{RS}, section 2.2.3. %We will write $\sigma = 2s- \epsilon$ for some $\epsilon \in (0,1)$ small to be fixed. 
	We consider $\sigma = 2s - \epsilon$ with $\epsilon$ small enough and $r=2$ in~\eqref{embed}. We have the existence of $\sigma' \in (0,2s)$ such that there exists $p > 2$ satisfying
	\begin{equation*}
	2s - \epsilon - \frac{N}{2} = \sigma' - \frac{N}{p},
	\end{equation*}
	that is
	\begin{equation*}
	p := \frac{2N}{N - 2(2s- (\sigma' + \epsilon))}. %=: \frac{2N}{N - 2^-} > 2,
	\end{equation*}

	%Then, $u \in W^{p_1, \sigma'}(\R^N)$ (traza??). 
	
	%From Lemma~\ref{trivlema} we conclude that $\cB(u,h) \in L^{p_1}(\Omega)$.
	
	Now we consider $\Omega' \subset \subset \Omega$ and let $4R = \mbox{dist}(\partial \Omega', \partial \Omega) > 0$. For each $x \in \Omega'$ we can write
	\begin{equation*}
	\cB(h,u)(x) = \cB[B_{R}](h,u)(x) + \cB[B_{R}^c](h,u)(x),
	\end{equation*}
	with
	$$
	\cB[A](h,u)(x) \int_{A} \frac{(u(x + z) - u(x))(h(x + z) - h(x))}{|z|^{N + 2s}}dz,
	$$
	for each measurable set $A \subseteq \R^N$.
	
	For the second integral, using Jensen's inequality we have that
	\begin{align*}
	|\cB[B_R^c](h,u)(x)|^p \leq & C \| h \|_\infty^p  \Big{(} \int_{B_R^c} |u(x + z) - u(x)| |z|^{-(N + 2s)}dz \Big{)}^p \\
	\leq & C_R \| h \|_\infty^p \Big{(} \int_{B_R^c(x) \cap \Omega} |u(y)|^p |x - y|^{-(N + 2s)}dy + |u(x)|^p \Big{)},
	\end{align*}
	where we have used that $u = 0$ in $\Omega^c$. Taking $p \leq 2_s^*$, we have $u \in L^p$ and therefore we arrive at
	\begin{equation*}
	\int_{\Omega'} |\cB[B_R^c](h,u)(x)|^p dx \leq C_R \| h \|_\infty^p \| u \|_{L^p(\Omega)}^p.
	\end{equation*}
	
	Now, for the first integral, using H\"older inequality we have
	\begin{equation*}\label{truco}
	\begin{split}
	|\cB[B_R](h,u)(x)|^p \leq &  
	\Big{(} \int_{B_R} |\delta u_x(z)|^p |z|^{-(N + p \sigma')} dz\Big{)} \Big{(} \int_{B_R} (\delta h_x(z))^q |z|^{-(N + q (2s - \sigma'))} dz\Big{)}^{p/q} 
	\\
	\leq & 
	C[h]_{C^\alpha}^p \Big{(} \int_{B_R} |\delta u_x(z)|^p |z|^{-(N + p\sigma')} dz\Big{)} \Big{(} \int_{B_R} |z|^{-N - q (2s - \sigma' - \alpha)} dz\Big{)}^{p/q} 
	%\Big{(} \int |z|^{-N + q \epsilon} dz\Big{)}^{p/q},
	%\int |\delta u_x(y)|^p |\delta h_x(y)|^{\alpha r} K(x,y)dy \ \Big{(} \int |\delta h_x(y)|^{(1 - \alpha) r/(r - 1)} K(x,y)dy \Big{)}^{1/(r - 1)}.
	\end{split}
	\end{equation*} 
	where $p, q$ are H\"older conjugates. Then, we take $\sigma' \in (\max \{ 0, 2s - \alpha - 1 \}, 2s)$ (sufficiently close to $\max \{ 0, 2s - \alpha - 1\}$ in order to maximize $p$) such that the second integral in the last expression is finite. Then 
	\begin{align*}
	& |\cB(h,u)(x)|^p \\
	\leq & \ C [h]_{C^\alpha}^p \Big{(} \int_{B_R(x) \cap \Omega} |u(y) - u(x)|^p |x - y|^{-(N + p\sigma')} dy + |u(x)|^p \int_{B_R(x) \cap \Omega^c} |x - y|^{-(N + p\sigma')} dy \Big{)},
	\end{align*}
	and using that $u \in W_{loc}^{p,\sigma'}(\Omega)$ and that $\Omega$ is a bounded domain, we arrive at
	\begin{equation*}
	\int_{\Omega'}|\cB(h,u)(x)|^p dx \leq C_R [h]_{C^\alpha}^p [u]_{W^{p, \sigma'}(\Omega' + B_R)}.
	\end{equation*}
	
	Combining the above inequalities we conclude that $\cB(u,h) \in L^p_{loc}(\Omega)$. Then, we have that $u$ is a weak solution to the problem $L_K u = \tilde f$ with $\tilde f \in L_{loc}^p(\Omega)$. 
	By the regularity estimates of~\cite{LPPS}, we conclude that $u \in W^{s, p}_{loc}(\Omega)$.
	%Now, we employ the regularity estimates in~\cite{LPPS} to get that $u \in W^{s,p_2}$ with $p_2$ such that 
	%$$
	%p_1 = \frac{p_2 N}{N + p_2 s},
	%$$
	%that is
	%$$
	%p_2 = \frac{2N}{N - 2^- - 2s} > p_1.
	%$$
\end{proof}

\subsection{Principal eigenvalue problem.} Once classical solutions are at hand, we can readily use the standard machinery to conclude the existence of the principal eigenvalue. We denote here
$$
\cL_{V} u := \cL u + Vu.
$$

\begin{proposition}\label{propeigen}
	Assume $K$ satisfying~\eqref{eliptic}, $h \in \mathrm{Lip}(\R^N) \cap L^\infty(\R^N)$ with~\eqref{h0} and $V \in C^\alpha(\Omega) \cap L^\infty(\Omega)$. Then, there exists $(\phi_1, \lambda_1) \in \mathbb H_0^s(\Omega) \cap C^{2s + \alpha}(\Omega) \times (0,+\infty)$ solving the problem 
	\begin{equation}\label{eigen}
	\cL_{V} \phi_1  = - \lambda_1 \phi_1 \quad \mbox{in} \ \Omega; \qquad \phi_1^+ = 0 \quad \mbox{in} \ \Omega^c.
	\end{equation}
	
	Denote $\Phi_+(\Omega) = \{ u \in \mathbb H_0^s(\R^N) \ : \ u > 0 \ \mbox{in} \ \Omega \}$. The number $\lambda_1$ is characterized as
	\begin{equation*}
	\lambda_1 = \sup \{ \lambda : \ \exists \ \phi \in \Phi_+(\Omega) \ s.t. \ \cL_{V} \phi \leq -\lambda \phi \ \mbox{in} \ \Omega \},
	\end{equation*}
	where the inequality in the definition is understood in the weak sense. 
\end{proposition}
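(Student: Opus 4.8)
The plan is to construct the eigenpair by inverting $\cL_V$ up to a large constant, applying the Krein--Rutman theorem to the resulting compact positive operator, and then to identify $\lambda_1$ with the supremum in the statement by a Berestycki--Nirenberg--Varadhan type comparison against the adjoint eigenfunction. Fix $C \geq C_0$, with $C_0$ as in Proposition~\ref{propexistence} and enlarged so that a routine extension of Proposition~\ref{maxp} applies to $u \mapsto \cL u + Vu - Cu$ (replacing the constant $C$ there by the nonnegative function $C - V$); this is the step that uses~\eqref{h0}. For $f \in L^2(\Omega)$ let $Sf \in \mathbb H_0^s(\Omega)$ be the unique weak solution of $\cL_V u - Cu = -f$ in $\Omega$ with $u = 0$ in $\Omega^c$, provided by Proposition~\ref{propexistence}. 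The coercivity bound $\| Sf \|_{\mathbb H_0^s(\Omega)} \leq C \| f \|_{L^2(\Omega)}$ together with the compact embedding $\mathbb H_0^s(\Omega) \hookrightarrow L^2(\Omega)$ shows $S \colon L^2(\Omega) \to L^2(\Omega)$ is compact, while Proposition~\ref{maxp} applied to $-Sf$ shows $S$ is positive and in fact positivity improving ($f \geq 0$, $f \not\equiv 0 \Rightarrow Sf > 0$ in $\Omega$), using the strong maximum principle for $\cL - C$, which is legitimate because $\cB(\cdot,h)$ is a lower-order term and $C - V \in L^\infty(\Omega)$. One also checks that the spectral radius $r(S)$ is positive by comparison with the torsion function of $\Omega$. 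By the Krein--Rutman/Jentzsch theorem, $\mu_1 := r(S)$ is a simple eigenvalue with eigenfunction $\phi_1 \geq 0$, $\phi_1 \not\equiv 0$; setting $\lambda_1 := \mu_1^{-1} - C$ gives $\cL_V \phi_1 = -\lambda_1 \phi_1$ weakly in $\Omega$ with the Dirichlet exterior condition, and since $\mu_1$ is the largest eigenvalue of $S$, $\lambda_1$ is the smallest eigenvalue of $-\cL_V$ (the expected sign of $\lambda_1$ being checked as in the classical case).

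The regularity assertion follows by bootstrap: from $\phi_1 \in \mathbb H_0^s(\Omega) \subset L^{2_s^*}(\Omega)$ and the equation written as $L_K \phi_1 = -\lambda_1 \phi_1 - \cB(\phi_1,h) - V\phi_1$, Lemma~\ref{lemaLp} gives $\phi_1 \in W^{s,p}_{loc}(\Omega)$ for some $p > 2$; iterating raises the integrability until $\phi_1 \in C^{\beta}_{loc}(\Omega)$, whence $\cB(\phi_1,h) \in C^{\beta'}_{loc}(\Omega)$ and $V\phi_1 \in C^{\alpha}_{loc}(\Omega)$, and interior Schauder estimates for $L_K$ then yield $\phi_1 \in C^{2s+\alpha}_{loc}(\Omega)$. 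Being continuous, nonnegative and nontrivial, $\phi_1$ is strictly positive in $\Omega$ by the interior Harnack inequality for the operator $\cL_V + \lambda_1$, so $\phi_1 \in \Phi_+(\Omega)$.

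It remains to prove $\lambda_1 = \lambda^* := \sup\{ \lambda : \exists\, \phi \in \Phi_+(\Omega) \text{ with } \cL_V \phi \leq -\lambda\phi \text{ weakly in } \Omega \}$. The inequality $\lambda^* \geq \lambda_1$ is immediate since $\phi_1$ is admissible with equality. For $\lambda^* \leq \lambda_1$, apply the construction above to the formal adjoint $\cL^* = L_K + \cB^*(\cdot,h)$, which obeys the same structural hypotheses, obtaining a nonnegative principal eigenfunction $\phi_1^*$ associated to the same eigenvalue since $r(S) = r(S^*)$. Given any $\phi \in \Phi_+(\Omega)$ with $\cL_V \phi \leq -\lambda\phi$, testing with $\phi_1^* \geq 0$ and using the weak integration-by-parts identity $\int (\cL_V \phi)\phi_1^* = \int \phi\, \cL_V^* \phi_1^* = -\lambda_1 \int \phi\phi_1^*$ gives $-\lambda_1 \int_\Omega \phi\phi_1^* \leq -\lambda \int_\Omega \phi\phi_1^*$, and since $\int_\Omega \phi\phi_1^* > 0$ this forces $\lambda \leq \lambda_1$; taking the supremum gives $\lambda^* \leq \lambda_1$.

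I expect the two genuinely delicate points to be the following. First, promoting $\phi_1 \geq 0$ to $\phi_1 > 0$ requires a strong maximum principle and Harnack inequality for the \emph{full} operator $\cL_V$, not merely for $L_K$: the drift $\cB(\cdot,h)$ must be absorbed, and this is exactly where its lower-order nature and the condition $\osc_{\R^N} h < 1$ (through Proposition~\ref{maxp}) come in. Second, justifying the pairing $\int (\cL_V \phi)\phi_1^* = \int \phi\, \cL_V^* \phi_1^*$ at the regularity $\phi, \phi_1^* \in \mathbb H_0^s(\Omega)$ needs care; it is handled by a density argument using the symmetry of the $L_K$-form and the interior smoothness of both eigenfunctions established in the second step.
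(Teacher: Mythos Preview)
Your argument is correct, but the paper takes a different route. The paper \emph{defines} $\lambda_1$ as the Berestycki--Nirenberg--Varadhan supremum from the outset, so the characterization is tautological; the work is to show this supremum is attained by an eigenfunction. Following Birindelli--Demengel, the paper first proves (Lemma~\ref{lemalambda<}) that for every $\lambda < \lambda_1$ and $f \geq 0$ the problem $\cL_V u = -\lambda u - f$ has a unique nonnegative solution, via monotone iteration starting from a positive supersolution supplied by the definition of $\lambda_1$. Then it takes $\lambda_k \nearrow \lambda_1$, solves $\cL_V u_k = -\lambda_k u_k - 1$, shows $\|u_k\|_{\mathbb H_0^s} \to \infty$ (else a limit would contradict the maximality of $\lambda_1$), and extracts a weak limit of $u_k/\|u_k\|_{\mathbb H_0^s}$ as the eigenfunction.

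Your Krein--Rutman construction followed by adjoint testing is a legitimate alternative. Note that the adjoint drift is $\cB^*(\cdot,h) = -\cB(\cdot,h)$ (a short computation using the product rule~\eqref{product} and self-adjointness of $L_K$), so $\cL^*$ is literally the operator~\eqref{LB} with $h$ replaced by $-h$ and your structural claim is justified. The trade-offs: your approach yields simplicity of $\lambda_1$ immediately from Krein--Rutman, but needs the separate adjoint argument for the BNV characterization and relies essentially on linearity; the paper's blow-up method gives the characterization for free and extends verbatim to fully nonlinear settings where no resolvent operator is available, at the cost of not directly delivering simplicity (which the paper defers to Remark~\ref{rmkeigen}). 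Both proofs are somewhat informal about the strict positivity $\phi_1 > 0$ and the interior $C^{2s+\alpha}$ regularity, and your sketch of the bootstrap via Lemma~\ref{lemaLp} is in fact more explicit than what the paper writes.
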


First we study the problem
\begin{lemma}\label{lemalambda<}
	Let $f \in L^2(\bar \Omega)$ nonnegative, and consider $\lambda < \lambda_1$. Then, there exists a unique  nonnegative solution weak solution $u \in \mathbb H_0^s(\Omega)$ to the Dirichlet  problem
	\begin{equation}\label{eqlambda}
	\cL_{V} u = -\lambda u - f \quad \mbox{in} \ \Omega; \qquad u = 0 \quad \mbox{in} \ \Omega^c.
	\end{equation}
\end{lemma}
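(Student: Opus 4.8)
\smallskip

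\noindent\textit{Proof sketch.} The plan is to reduce the lemma to a \emph{refined maximum principle} valid below $\lambda_1$ together with a Fredholm argument. First I would establish that, for $\lambda<\lambda_1$, every $w\in\mathbb H_0^s(\Omega)$ with $\cL_V w+\lambda w\ge 0$ in the weak sense satisfies $w\le 0$ in $\Omega$. Using the sup-characterization of $\lambda_1$, fix $\lambda'\in(\lambda,\lambda_1)$ and $\phi\in\Phi_+(\Omega)$ with $\cL_V\phi\le-\lambda'\phi$ weakly; by Lemma~\ref{lemaLp} and the available interior regularity, both $\phi$ and $w$ are smooth inside $\Omega$, while a Hopf-type bound $\phi\ge c\,\dist(\cdot,\partial\Omega)^s$ and the boundary behaviour of $w$ should force $\sigma:=\sup_\Omega(w/\phi)$, if positive, to be attained at an interior point $x_0$ (the Berestycki--Nirenberg--Varadhan-type step, here in the nonlocal setting). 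At such $x_0$ the function $z:=\sigma\phi-w\ge 0$ vanishes, so $L_Kz(x_0)=\int z(y)K(x_0,y)\,dy\ge 0$; since $z\ge 0$ and $\osc_{\R^N}h<1$, the estimate of $B_0$ in the proof of Proposition~\ref{maxp} gives $\cB(z,h)(x_0)\ge-\tfrac{h_0}{2}L_Kz(x_0)$, whence $\cL_Vz(x_0)=L_Kz(x_0)+\cB(z,h)(x_0)\ge(1-\tfrac{h_0}{2})L_Kz(x_0)\ge 0$; but the supersolution property yields $\cL_Vz+\lambda z\le\sigma(\lambda-\lambda')\phi<0$ in $\Omega$, a contradiction, so $\sigma\le 0$.

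Granting this, I would finish by a soft argument. Choose $C\ge C_0$ as in Proposition~\ref{propexistence}, in addition with $C>\|V\|_\infty$ and $C+\lambda>0$, and let $\mathcal{T} g$ be the unique weak solution of $\cL_V w-Cw=-g$ in $\Omega$, $w=0$ in $\Omega^c$; by Proposition~\ref{propexistence} this is a bounded linear map $L^2(\Omega)\to\mathbb H_0^s(\Omega)$, hence, composed with the compact embedding $\mathbb H_0^s(\Omega)\hookrightarrow L^2(\Omega)$, a compact operator on $L^2(\Omega)$. A function $u\in\mathbb H_0^s(\Omega)$ solves~\eqref{eqlambda} if and only if $(I-(C+\lambda)\mathcal{T})u=\mathcal{T}f$. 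If $\chi$ lies in the kernel of $I-(C+\lambda)\mathcal{T}$, then $\chi=(C+\lambda)\mathcal{T}\chi\in\mathbb H_0^s(\Omega)$ solves $\cL_V\chi+\lambda\chi=0$ weakly, so the refined maximum principle applied to $\chi$ and to $-\chi$ gives $\chi\equiv 0$; by the Fredholm alternative $I-(C+\lambda)\mathcal{T}$ is invertible with bounded inverse, which yields the existence and uniqueness of $u=(I-(C+\lambda)\mathcal{T})^{-1}\mathcal{T}f$. Finally $u\ge 0$: since $f\ge 0$, $-u$ is a weak subsolution of $\cL_V v+\lambda v=0$ in $\mathbb H_0^s(\Omega)$, so $-u\le 0$ by the refined maximum principle.

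The step I expect to be the main obstacle is the refined maximum principle itself — equivalently, the coercivity of $\cL_V+\lambda$ below $\lambda_1$. Its proof has to combine the energy estimates and the absorption of the non-self-adjoint term $\cB(\cdot,h)$ already used in Propositions~\ref{propexistence}--\ref{maxp} (again using $\osc h<1$), the interior regularity needed to legitimise the pointwise touching argument, and the Hopf-type boundary behaviour confining $\sup(w/\phi)$ to the interior; since the data $f$ is only $L^2$ and solutions are weak, no purely pointwise comparison with $\phi$ is available. Should one prefer to avoid the Fredholm step, an alternative is a monotone iteration $u_0=0$, $\cL_V u_{n+1}-Cu_{n+1}=-(C+\lambda)u_n-f$, whose iterates are nonnegative and nondecreasing by Proposition~\ref{maxp}, with the refined maximum principle again supplying the uniform $L^2$-bound (a normalized limit would solve the homogeneous problem); or one may identify $(C+\lambda_1)^{-1}$ with the spectral radius of $\mathcal{T}$ via Krein--Rutman theory and conclude through a Neumann series. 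In all of these routes, the refined maximum principle is the essential point.
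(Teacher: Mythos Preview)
Your outline is sound, but the paper takes a more direct and shorter route. Instead of proving a refined maximum principle below $\lambda_1$ and then invoking Fredholm, the paper runs a monotone iteration \emph{from above}: using the very definition of $\lambda_1$ it picks $u_0\in\Phi_+(\Omega)$ with $\cL_V u_0\le -\lambda u_0 - f$ (this is where $\lambda<\lambda_1$ enters), then solves $\cL_V u_{k+1}=-\lambda u_k-f$ with Proposition~\ref{propexistence} and uses only the \emph{basic} maximum principle of Proposition~\ref{maxp} to obtain $0\le u_{k+1}\le u_k\le u_0$; compactness of the decreasing, a priori bounded sequence finishes the job. No refined comparison, no pointwise touching, no Fredholm step is needed.

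Your approach buys you more---the refined maximum principle and uniqueness below $\lambda_1$ drop out as corollaries, and the Fredholm/Krein--Rutman viewpoint is conceptually cleaner---but it also costs more: the touching argument needs interior regularity for both $w$ and the test function $\phi$, and a Hopf-type lower bound for $\phi$ to confine $\sup(w/\phi)$ to the interior. Note that the only place such a boundary bound is recorded in the paper is Remark~\ref{rmkeigen}, and it is stated for the eigenfunction whose existence is precisely what this lemma is used to prove; for an arbitrary $\phi\in\Phi_+(\Omega)$ coming from the sup-definition of $\lambda_1$ you have no such control. The paper's decreasing iteration avoids this circularity entirely. Your ``alternative'' increasing iteration from $u_0=0$ is closer in spirit, but still needs the refined principle for the uniform bound, whereas starting from a supersolution gives the bound for free.
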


\begin{proof}
	Replacing $\mathcal L_{V} u$ by $\mathcal L_{V} u - Cu$ for $C$ large enough, we can assume that the operator $\mathcal L_{V}$ satisfies the maximum principle and that $\lambda_1 > 0$. By definition, there exists $u_0 \in \Phi_+(\Omega)$ such that $\cL_{V} u_0 \leq -\lambda u_0 - f$ in $\Omega$, in the weak sense. Then, for each $k \geq 0$ we define $u_k \in \mathbb H_0^s(\Omega)$ the unique weak solution to the problem
	\begin{equation*}
	\cL_{V} u_{k + 1} = -\lambda u_k - f \quad \mbox{in} \ \Omega.
	\end{equation*}
	
	By maximum principle we see that $0 \leq u_{k + 1} \leq u_k \leq u_0$ in $\Omega$, and for $k$ large enough the solution, by a bootstrap argument using Lemma~\ref{lemaLp}, we have the sequence $\{ u_k \}_k$ is uniformly bounded in $\mathbb H^s_0(\Omega)$. Then, after extracting a subsequence, we conclude that $u_k \rightharpoonup u$ in $\mathbb H_0^s(\Omega)$ as $k \to \infty$, and the limit function is a weak solution to~\eqref{eqlambda}. %Since this solution is bounded, it is possible to construct barriers with the distance function (see Lemma~\ref{lemarescale} in the Appendix) to conclude that $u \in C(\R^N)$ when it is extended by zero outside $\Omega$.
\end{proof}

Now we are in position present the

\medskip
\noindent
{\bf \textit{Proof of Proposition~\ref{propeigen}:}} 
	Here we follow the arguments of Birindelli and Demengel~\cite{BD}. We assume without loss of generality that $\lambda_1 > 0$. Consider a sequence $\lambda_k \nearrow \lambda_1$, and $u_k$ the unique nonnegative weak solution to
	\begin{equation*}
	\cL_{V} u_k = -\lambda_k u_k - 1 \quad \mbox{in} \ \Omega; \quad u_k = 0 \quad \mbox{in} \ \Omega^c,
	\end{equation*}
	given by Lemma~\ref{lemalambda<}. %In particular we have $\cL_K u_k \leq -1$ in $\Omega$ and {\color{red} appropriate barriers using Lemma~\ref{lemarescale}} we conclude that $u_k \geq c_{\Omega'} > 0$ for some $\Omega' \subset \subset \Omega$. 
	
	We claim that $\|u_k \|_{\mathbb H_0^s(\Omega)} \to +\infty$ as $k \to \infty$. If not, there exists a constant $C > 0$ such that, up to subsequences, we have $\| u_k \|_{\mathbb H_0^s(\Omega)} \leq C$. Then, we can extract a subsequence converging weakly to some $\bar u \in \mathbb H_0^s(\Omega)$, weak solution to the problem 
	$$
	\cL_{V} \bar u = -\lambda_1 \bar u - 1 \quad \mbox{in} \ \Omega; \qquad \bar u = 0 \quad \mbox{in} \ \Omega^c,
	$$
	but this contradicts the definition of $\lambda_1$ and the claim follows.
	
	Considering $v_k = u_k/\| u_k\|_{\mathbb H_0^s(\Omega)}$ we have $v_k$  weakly converges (up to subsequences) to a nontrivial solution to the eigenvalue problem. %By appropriate upper barriers using the Appendix, it is possible to conclude the limit solution is non trivial. 
\qed

\begin{remark}\label{rmkeigen}
	Assume $V \in L^\infty(\Omega) \cap C^\alpha(\Omega)$ and that $\partial\Omega$ is of class $C^2$. Denote 
	$$
	\tilde \Phi_+(\Omega) = \{ u \in \mathbb H_0^s(\R^N) \cap L^\infty(\Omega) \ : \ u > 0 \ \mbox{in} \ \Omega \}
	$$
	and define $\tilde \lambda_1 = \sup \{ \lambda \ : \ \exists \ \phi \in \tilde \Phi_+(\Omega) \ s.t. \ \cL_{V} \phi \leq \lambda \phi \ \mbox{in} \ \Omega\}.$
	
	It is possible to prove the existence of an eigenpair $(\tilde \phi_1, \tilde \lambda_1) \in C^{2s + \alpha}(\Omega) \cap C(\R^N) \times (0, +\infty)$ solving the eigenvalue problem 
	\begin{equation}\label{eigen2}
	\cL_{V} \tilde \phi_1  = - \tilde \lambda_1 \tilde \phi_1 \quad \mbox{in} \ \Omega; \qquad \phi_1^+ = 0 \quad \mbox{in} \ \Omega^c.
	\end{equation}
	
	In fact, it is possible to prove that the principal eigenvalue is simple in the sense that if $(\phi, \tilde \lambda_1)$ solves~\eqref{eigen2}, then $\phi = t \tilde \phi_1$ for some $t \in \R$, and unique in the sense that if $(\phi, \lambda)$ solves~\eqref{eigen2} with $\lambda > \tilde \lambda_1$, then $\phi$ changes sign in $\Omega$. See~\cite{DQT}. Moreover, using the computations in Lemma~\ref{lemarescale}, it is possible to construct barriers leading to the boundary estimate $c d^{s + \epsilon}(x) \leq \tilde \phi_1(x) \leq C d^{s - \epsilon}(x)$ for all $x \in \Omega$.
\end{remark}

%	{\color{red}Simplicity and uniqueness by sweeping}
%
%
%***The principal eigenvalue
%is simple, in the sense that if $(\lambda_1^+, \phi)$ solves~\eqref{eigen}, then $\phi = c \phi_1^+$; and it is unique in the sense that if $(\lambda, \phi)$ solves~\eqref{eigen} with $\lambda \neq \lambda_1^+$, then $\phi$ changes sign in $\Omega$.

\medskip

We conclude this section with the following
\begin{proposition}\label{Char}
Under the assumptions of Proposition~\ref{propeigen}, the principal eigenvalue can be characterized as
\begin{equation*}
\lambda_1(\cL_{V}, \Omega) = \min_{\mu \in \mathcal P(\Omega)} \sup_{\tiny{\begin{array}{c}\phi \in C^{\sigma + \alpha}(\Omega) \\ \phi > 0 \end{array}}} \int_{\Omega} \frac{-\cL_{V} \phi (x)}{\phi(x)} d\mu(x),
\end{equation*}
where $\mathcal P (\Omega)$ is the set of Radon probability measures in $\Omega$.
\end{proposition}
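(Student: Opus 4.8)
The plan is to establish the two inequalities separately. The bound $\lambda_1(\cL_V,\Omega)\le$ (right-hand side) is immediate by testing with the principal eigenfunction; the reverse bound is obtained by exhibiting a concrete minimizer $\mu^*$, built from the principal eigenfunctions of $\cL_V$ and of its $L^2$-adjoint, and then invoking Jensen's inequality for the ``ground-state transform'' of $\cL_V$.

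\emph{Easy bound.} By Proposition~\ref{propeigen} there is an eigenpair $(\phi_1,\lambda_1)$ with $\phi_1\in C^{2s+\alpha}(\Omega)$, $\phi_1>0$ in $\Omega$ and $\cL_V\phi_1=-\lambda_1\phi_1$, so $-\cL_V\phi_1/\phi_1\equiv\lambda_1$ in $\Omega$. Since $\phi_1$ is an admissible competitor, $\sup_\phi\int_\Omega(-\cL_V\phi/\phi)\,d\mu\ge\int_\Omega\lambda_1\,d\mu=\lambda_1$ for every $\mu\in\mathcal P(\Omega)$; taking the infimum over $\mu$ yields $\lambda_1(\cL_V,\Omega)\le\inf_\mu\sup_\phi$.

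\emph{The minimizer.} First observe that $\cL u=L_Ku+\cB_K(u,h)=\text{P.V.}\int_{\R^N}(u(y)-u(x))\tilde K(x,y)\,dy$ with $\tilde K(x,y):=(1+\tfrac12(h(y)-h(x)))K(x,y)$, and $\tilde K>0$ because $\osc_{\R^N}h<1$. Using the product rule~\eqref{product} and~\eqref{porpartes} one computes the $L^2$-adjoint $\cL_V^*v=L_Kv-\cB_K(v,h)-(L_Kh)v+Vv=L_Kv+\cB_K(v,-h)+(V-L_Kh)v$; since $\osc_{\R^N}(-h)=\osc_{\R^N}h<1$ and (using that $h$ is smooth) $V-L_Kh\in C^\alpha(\Omega)\cap L^\infty(\Omega)$, Proposition~\ref{propeigen} applies to $\cL_V^*$ and produces $(\psi_1,\lambda_1^*)$ with $\psi_1>0$ in $\Omega$, $\psi_1=0$ in $\Omega^c$. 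Pairing $\cL_V\phi_1=-\lambda_1\phi_1$ against $\psi_1$ and $\cL_V^*\psi_1=-\lambda_1^*\psi_1$ against $\phi_1$ (both licit since $\phi_1,\psi_1\in\mathbb H_0^s(\Omega)$) and using $\int_\Omega\phi_1\psi_1>0$ forces $\lambda_1^*=\lambda_1$. Since $\phi_1,\psi_1\in L^{2_s^*}(\Omega)$ and $|\Omega|<\infty$, the measure $\mu^*:=(\int_\Omega\phi_1\psi_1)^{-1}\phi_1\psi_1\,dx$ is a well-defined element of $\mathcal P(\Omega)$.

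\emph{Reverse bound.} Fix a positive competitor $\phi$ and write $\phi=w\phi_1$ in $\Omega$. Introduce $\hat L w:=\phi_1^{-1}\cL_V(\phi_1w)+\lambda_1w$, where $\phi_1 w$ is extended by $0$ outside $\Omega$. Expanding $\cL_V(\phi_1w)$ with the formulas above and using $\cL_V\phi_1=-\lambda_1\phi_1$ one finds $\hat L w(x)=\text{P.V.}\int_{\R^N}(w(y)-w(x))\hat K(x,y)\,dy$ with $\hat K(x,y):=\tfrac{\phi_1(y)}{\phi_1(x)}\tilde K(x,y)\ge0$ (and $\hat K(x,\cdot)$ supported in $\Omega$), so $\hat L$ is a conservative Markov-type nonlocal operator, i.e.\ $\hat L 1=0$. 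Its invariant measure is $\phi_1\psi_1\,dx$: for bounded $g$, $\int_\Omega\hat L g\,\phi_1\psi_1\,dx=\int_\Omega\cL_V(\phi_1g)\psi_1\,dx+\lambda_1\int_\Omega\phi_1g\psi_1\,dx=\int_\Omega\phi_1g(\cL_V^*\psi_1+\lambda_1\psi_1)\,dx=0$. Now the concavity of $\log$ gives the pointwise bound $\tfrac{w(y)}{w(x)}-1\ge\log w(y)-\log w(x)$, hence $\hat L w/w\ge\hat L(\log w)$ on $\Omega$; moreover $\cL_V\phi\ge\cL_V(\phi_1w)$ in $\Omega$ (the difference being $\int_{\Omega^c}\phi\,\tilde K\,dy\ge0$), so after dividing by $\phi>0$ and integrating against $\mu^*$,
\begin{align*}
\int_\Omega\frac{-\cL_V\phi}{\phi}\,d\mu^*
&\le\Big(\int_\Omega\phi_1\psi_1\Big)^{-1}\int_\Omega\Big(\lambda_1-\frac{\hat L w}{w}\Big)\phi_1\psi_1\,dx \\
&\le\lambda_1-\Big(\int_\Omega\phi_1\psi_1\Big)^{-1}\int_\Omega\hat L(\log w)\,\phi_1\psi_1\,dx=\lambda_1 .
\end{align*}
Taking $w\equiv1$ gives equality, so $\sup_\phi\int_\Omega(-\cL_V\phi/\phi)\,d\mu^*=\lambda_1$; together with the easy bound the minimum over $\mathcal P(\Omega)$ is attained at $\mu^*$ and equals $\lambda_1(\cL_V,\Omega)$.

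\emph{Main obstacle.} The delicate point is the rigorous justification of the identities $\int_\Omega\hat L g\,\phi_1\psi_1=0$ (invariance) and $\int_\Omega\hat L(\log w)\,\phi_1\psi_1=0$ when $w=\phi/\phi_1$, and hence $\log w$, is unbounded near $\partial\Omega$ (where $\phi_1,\psi_1$ behave like $d^{s\pm\epsilon}$ by the barriers of Remark~\ref{rmkeigen}): one runs the whole argument first with $w$ replaced by the truncation $w_k:=(w\wedge k)\vee k^{-1}$, for which all the pairings are elementary and all terms bounded, obtaining the displayed inequality for $w_k\phi_1$, and then lets $k\to\infty$ using monotone/dominated convergence, the pointwise Jensen inequality being stable under truncation. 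A secondary, routine issue is checking that the competitors supplied by the definition (positive, in $C^{\sigma+\alpha}(\Omega)$) are compatible with the substitution $\phi=w\phi_1$ and with the evaluation of $\cL_V\phi$; here the interior regularity $\phi_1\in C^{2s+\alpha}(\Omega)$ and the two-sided boundary estimates on $\phi_1$ from Remark~\ref{rmkeigen} are used.
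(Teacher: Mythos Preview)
Your argument is correct in spirit but takes a genuinely different route from the paper. The paper proceeds abstractly: starting from the Berestycki--Nirenberg--Varadhan characterization $\lambda_1=\sup_\phi\inf_x(-\cL_V\phi/\phi)=\sup_\phi\inf_\mu\int(-\cL_V\phi/\phi)\,d\mu$, it performs the change of variables $\phi=e^u$, observes that the resulting functional $T(u,\mu)$ is convex in $u$ and linear in $\mu$, and then invokes Sion's minimax theorem to exchange the $\sup$ and the $\inf$. No minimizing measure is ever identified.

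Your approach is the classical constructive Donsker--Varadhan strategy: build the optimal measure explicitly as $\mu^*=c\,\phi_1\psi_1\,dx$, where $\psi_1$ is the principal eigenfunction of the adjoint, and verify optimality via the ground-state (Doob) transform $\hat L$ together with the pointwise Jensen inequality $\hat Lw/w\ge\hat L(\log w)$. This buys you more information---the minimizer is exhibited, and the Markov/invariant-measure structure of $\hat L$ is made transparent---at the cost of the boundary analysis you flag: $w=\phi/\phi_1$ blows up like $d^{-s\pm\epsilon}$ near $\partial\Omega$, and the invariance identity $\int_\Omega\hat L(\log w)\,\phi_1\psi_1=0$ must be justified through the truncation $w_k$. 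Your sketch of that step is plausible, but the passage to the limit (monotone/dominated convergence for $\int\hat Lw_k/w_k\,d\mu^*$) would need a dominating function, which is not entirely obvious given that $\hat K(x,y)=\phi_1(y)\phi_1(x)^{-1}\tilde K(x,y)$ itself degenerates as $x\to\partial\Omega$. The paper's Sion-based argument bypasses all of this, which is why it is considerably shorter; conversely, your argument, once the truncation is made rigorous, would also yield the min (not just inf) in the statement by exhibiting $\mu^*$.

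One minor point: for the adjoint eigenvalue problem you need $V-L_Kh\in C^\alpha\cap L^\infty$, which uses more than $h\in\mathrm{Lip}\cap L^\infty$ (the hypothesis of Proposition~\ref{propeigen}); this is fine under the smoothness assumptions on $h$ used elsewhere in the paper, but is worth noting.
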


\begin{proof}
The existence of the eigenvalue allows to get
\begin{equation*}
\lambda_1(\cL_{V}, \Omega) = \sup_{\tiny{\begin{array}{c}\phi \in C^{\sigma + \alpha}(\Omega) \\ \phi > 0 \end{array}}} \inf_{x \in \Omega} \frac{-\cL_{V} \phi (x)}{\phi(x)}
= \sup_{\tiny{\begin{array}{c}\phi \in C^{\sigma + \alpha}(\Omega) \\ \phi > 0 \end{array}}} \inf_{\mu \in \mathcal P(\Omega)} \int_\Omega \frac{-\cL_{V} \phi (x)}{\phi(x)} d\mu(x),
\end{equation*}
where in the last equality we have used that $\cL_{V} \phi$ is continuous. Now, using the change of variables $\phi = e^u$ we see that 
\begin{align*}
\frac{\cL_{V} \phi (x)}{\phi(x)} = & \pv \int_{\Omega} [e^{\delta_xu(y)} - 1]K(x,y)dy + \int_{\Omega} [e^{\delta_x u(y)} - 1]\delta_x h(y) K(x,y)dy \\
& + \int_{\Omega^c} (1 + \delta_x h(y)) K(x,y)dy +  V(x)\\
= &\int_{\Omega} [e^{\delta_xu(y)} - 1 - \mathbf{1}_{B(x,d(x))} Du(x) \cdot (y - x)] (1 + \delta_x h(y))K(x,y)dy  \\
& + b(x) \cdot Du(x) + \int_{\Omega^c} (1 + \delta_x h(y)) K(x,y)dy + V(x),
\end{align*}
where $b(x) = \int_{B_{d(x)}(x)} \delta_x h(y) (y - x)K(x,y)dy$. Notice that $b \in C^\alpha(\Omega, \R^N)$. Thus, using the notation
\begin{align*}
T(u, \mu) = \int_{\Omega} \Big{\{}\int_{\Omega} & [e^{\delta_xu(y)} - 1 - \mathbf{1}_{B(x,d(x))} Du(x) \cdot (y - x)] (1 + \delta_x h(y))K(x,y)dy  \\
& + b(x) \cdot Du(x) + \int_{\Omega^c} (1 + \delta_x h(y)) K(x,y)dy \Big{\}} d\mu(x) + \int_{\Omega} V(x) d\mu(x),
\end{align*}

we see that 
\begin{equation*}
\lambda_1(\cL_{V}, \Omega) = \sup_{u \in C^{\sigma + \alpha}} \inf_{\mu \in \mathcal P(\Omega)} T(u, \mu).
\end{equation*}

The map $T(u, \mu)$ is convex in $u$ and linear in $\mu$. Sion's Theorem~\cite{S} let us conclude the result.
%\begin{equation*}
%\lambda_1(\cL_K, \Omega) = \sup_{\tiny{\begin{array}{c}\phi \in C^{\sigma + \alpha}(\Omega) \\ \phi > 0 \end{array}}} \inf_{\mu \in \mathcal P(\Omega)} \frac{-\cL_K \phi (x)}{\phi(x)} d\mu(x).
%\end{equation*}  
\end{proof}

%%%%%%%%%%%%%%%%%%%%%%%%%%%%%%%%%%%%%%%%%%%%%%%%%%%%%%%%%%%%%%%%%%%%%%%%%%%%%%%%%%%%

\section{Variational Formulation of the Eigenvalue Problem for $L_K$}\label{Secsindrift}

Let $\mu$ be a probability measure in $\R^N$ and denote
\begin{align}\label{generalI}
I(\mu)=-\inf\limits_{\substack {u>0\\
u\in C^{\infty}(\R^N)}}\int_{\R^N} \left(\frac{\cL u}{u}\right)d\mu.
\end{align}

Thanks to Proposition~\ref{Char} and following Theorem 2.3 in~\cite{DV} we have that 
for all $\mu \in \mathcal P(\Omega)$ we can write
\begin{equation}\label{charact2}
I(\mu) = - \inf_{V \in C^\infty(\R^N)} \Big{\{} -\lambda_1(\mathcal L_{V}) - \int_{\R^N} V(x) d\mu(x) \Big{ \}}.
\end{equation}

This allows us to move our attention from the eigenvalue $\lambda_1$ to the the map $I(\mu)$ in order to investigate the operator $\cL$.

The aim of this Section is to understand the minimization problem stated in \eqref{generalI} in the special case $h \equiv 0$. This particular case is of special interest, since it will suggest an appropriate change of variables to get some information in the general case (see Lemma \ref{errornolocal}).

%the eigenvalue can be rewritten as
%\[
%\lambda_1(\cL_K, \Omega)=\min\limits_{\mu\in\mathcal P(\Omega) } I(\mu).
%\]
%
%Moreover we have the following representation for $I(\mu)$.
%\begin{lemma}
%For any $\mu$ with $\text{supp}(\mu)\subset \bar \Omega$ 
%\begin{align}\label{Dual}
%I(\mu)=-\inf\limits_{V}
%\end{align}
%\end{lemma}
%{\red Toma el inf en $V$?}

Next we consider the case when $\mu$ is a measure with density $f$. % and for fixed $K$ denote
%\begin{align}\label{generalI}
%I(\mu)=-\inf\limits_{\substack {u>0\\
%u\in C^{\infty}(\R^N)}}\int_{\R^N} \left(\frac{L_Ku}{u}\right)f(x)dx.
%\end{align}
We have the following preliminary result.
\begin{lemma}\label{sinb}
Assume $K$ satisfies~\eqref{eliptic} and denote $L=L_K$. If $\mu$ is a probability measure with such that $\mu(dx)=f(x)dx$, where $f\in C^\infty (\R^N)$, and if $\sqrt f\in C^{2s+\alpha}(\R^N)$ for some $\alpha>0$, then 
\[
I(\mu)=\int_{\R^N}\cB(\sqrt f)dx
\]
\end{lemma}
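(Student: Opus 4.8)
The plan is to compute the infimum in \eqref{generalI} directly by exploiting the exponential substitution $u = e^w$ already used in the proof of Proposition~\ref{Char}, and to recognize that with $h \equiv 0$ the functional $u \mapsto \int_{\R^N} \frac{L u}{u}\, d\mu$ becomes, after this substitution, a strictly convex functional of $w$ whose critical point can be identified with $w = \tfrac12 \log f$. First I would write, for $u = e^w > 0$ smooth,
\begin{equation*}
\frac{L u(x)}{u(x)} = \pv \int_{\R^N} \big( e^{w(y) - w(x)} - 1 \big) K(x,y)\, dy,
\end{equation*}
so that
\begin{equation*}
-\int_{\R^N} \frac{L u}{u}\, d\mu = -\int_{\R^N} \int_{\R^N} \big( e^{\delta_x w(y)} - 1 \big) K(x,y)\, dy\, f(x)\, dx,
\end{equation*}
writing $\delta_x w(y) = w(y) - w(x)$. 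The idea is then to show that this quantity is maximized (over smooth $w$, equivalently over smooth positive $u$) precisely when $w = \tfrac12 \log f$, i.e. $u = \sqrt f$, and that the maximal value is $\int_{\R^N} \cB(\sqrt f)\, dx$.

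For the lower bound $I(\mu) \ge \int \cB(\sqrt f)$ (i.e. the infimum is $\le -\int \cB(\sqrt f)$): plug in the candidate $u = \sqrt f$, which is admissible by the regularity hypothesis $\sqrt f \in C^{2s+\alpha}$, and compute. Using the product rule \eqref{product} with $u = v = \sqrt f$ we get $L f = 2\sqrt f\, L\sqrt f + 2\cB(\sqrt f)$, hence
\begin{equation*}
\frac{L \sqrt f}{\sqrt f} = \frac{L f}{2 f} - \frac{\cB(\sqrt f)}{f}.
\end{equation*}
Integrating against $f\, dx$ gives $\int \frac{L\sqrt f}{\sqrt f}\, d\mu = \tfrac12 \int L f\, dx - \int \cB(\sqrt f)\, dx$, and since $\int_{\R^N} L f\, dx = 0$ by \eqref{porpartes} with test function $v \equiv 1$ (using integrability of $f$), this yields $-\int \frac{L\sqrt f}{\sqrt f}\, d\mu = \int \cB(\sqrt f)\, dx$, so $I(\mu) \ge \int \cB(\sqrt f)\, dx$. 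For the reverse inequality I would use the pointwise convexity estimate $e^t - 1 \ge t + $ (nothing useful directly), so instead I would argue via the substitution: with $u = e^w$ and the elementary inequality $e^a - 1 \ge a \cdot \frac{a}{2}$-type bounds being too crude, the cleaner route is to use the convexity inequality $e^a \ge 1 + a + \tfrac12 a^2$ only when it helps, but in fact the right tool is the tangent-line trick at the optimizer: writing $w = \tfrac12\log f + \psi$ and using $e^{\delta_x w(y)} = e^{\delta_x(\frac12\log f)(y)} e^{\delta_x\psi(y)} \ge e^{\delta_x(\frac12\log f)(y)}(1 + \delta_x\psi(y))$, one obtains after a symmetrization in $x,y$ (legitimate because $K$ is symmetric and $\mu$ has density $f$, so that $e^{\delta_x(\frac12\log f)(y)} f(x) = \sqrt{f(x) f(y)}$ is symmetric) that
\begin{equation*}
-\int \frac{L e^w}{e^w}\, d\mu \le -\int \frac{L \sqrt f}{\sqrt f}\, d\mu + (\text{a term that vanishes by symmetry}),
\end{equation*}
giving $I(\mu) = \int \cB(\sqrt f)\, dx$.

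In more detail, the symmetrization in the last step works as follows: the difference of the two functionals is
\begin{equation*}
\iint_{\R^N \times \R^N} e^{\delta_x(\frac12\log f)(y)}\big( e^{\delta_x\psi(y)} - 1\big) K(x,y)\, f(x)\, dy\, dx = \iint \sqrt{f(x)f(y)}\,\big(e^{\delta_x\psi(y)} - 1\big) K(x,y)\, dy\, dx,
\end{equation*}
and symmetrizing $x \leftrightarrow y$ (using $K(x,y)=K(y,x)$ and $\psi(x)-\psi(y) = -\delta_x\psi(y)$) this equals $\tfrac12\iint \sqrt{f(x)f(y)}\,\big(e^{\delta_x\psi(y)} + e^{-\delta_x\psi(y)} - 2\big) K(x,y)\, dy\, dx \ge 0$ since $e^t + e^{-t} - 2 \ge 0$. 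Hence the candidate $u=\sqrt f$ really is the minimizer and $I(\mu) = \int_{\R^N}\cB(\sqrt f)\, dx$. The main obstacle I anticipate is purely one of integrability and justification of the manipulations: one must check that $\int L f\, dx = 0$ and the interchange of integrals / symmetrization are valid under the stated hypotheses (that $f, \sqrt f$ are smooth with the appropriate decay so that all the nonlocal integrals converge absolutely), and that a general competitor $u \in C^\infty(\R^N)$, $u>0$, can be written as $e^w$ with $w$ admissible in these computations — i.e. handling the tail behavior of $u$ relative to the kernel $K$ and the measure $\mu$. The algebraic heart of the argument, by contrast, is the short product-rule computation plus the convexity-symmetrization trick.
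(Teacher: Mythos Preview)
Your argument is correct and in fact gives a cleaner proof of global optimality than the paper's own. The paper proceeds by computing the first variation of $\mathcal I(u)=\int (Lu/u)f\,dx$, deriving the Euler--Lagrange equation, and then manipulating it through repeated applications of the product rule~\eqref{product} until it reduces to $2\frac{f}{u}Lu=Lf-2\cB(f/u,u)$; one then checks by inspection that $u=\sqrt f$ solves this, and evaluates the functional there. Strictly speaking this only identifies $\sqrt f$ as a critical point, with minimality left implicit. Your route --- the substitution $u=e^w$, the decomposition $w=\tfrac12\log f+\psi$, and the symmetrization exploiting that $e^{\delta_x(\frac12\log f)(y)}f(x)=\sqrt{f(x)f(y)}$ is symmetric in $(x,y)$ --- yields directly the inequality $\int(Lu/u)\,d\mu\ge \int(L\sqrt f/\sqrt f)\,d\mu$ for every competitor, via the nonnegativity of $\cosh(\delta_x\psi(y))-1$. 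This is essentially the $h\equiv0$ specialization of what the paper does later in Lemmas~\ref{lemaerrornolocal} and~\ref{errorpositivo} (the substitution $u=\sqrt f\,v$, $v=e^w$), so you have found the argument the authors use for the general drift case and applied it where they instead chose a first-variation computation. What the paper's approach buys is that it motivates the ansatz $u=\sqrt f$ rather than requiring one to guess it; what yours buys is an immediate proof of global minimality and far less algebra. The integrability caveats you flag (the need to keep the principal value until after symmetrization when $s\ge 1/2$, and the positivity of $f$ so that $\log f$ makes sense) are real but minor, and are handled in the paper's later lemmas exactly as you suggest, by first treating $f>0$ and then approximating.
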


\begin{proof}
We will compute the first variation of 
\[
\mathcal I(u)=\int_{\R^N}\frac{Lu}{u}f(x)dx,
\]
for $u$ smooth and strictly positive. Since the operator $L$ is linear we have 
\begin{align*}
\left.\frac{d\mathcal{I} (u+tv)}{dt}\right\rvert_{t=0}&=\int_{\R^N}\left(\frac{-v}{u^2}Lu+\frac{1}{u}Lv\right)f(x)dx\\
&=\int_{\R^N}\left(\frac{-v}{u^2}Lu+vL\left(\frac{f}{u}\right)\right)dx.
\end{align*}
Therefore imposing the first order condition
\[
\left.\frac{d\mathcal{I} (u+tv)}{dt}\right\rvert_{t=0}=0\ \  \forall v,
\]
we deduce 
\begin{align}\label{1storder}
\frac{f}{u^2}Lu=L\left(\frac{f}{u}\right).
\end{align}
Now, using \eqref{product} we write
\[
L\left(\frac{f}{u}\right)=\frac{1}{u}Lf+fL\left(\frac{1}{u}\right)+2\cB\left(f,\frac{1}{u}\right),
\]
and therefore we deduce from \eqref{1storder}
\begin{align}\label{eqmin}
\frac{f}{u}Lu=Lf+2u\cB\left(f,\frac{1}{u}\right)+ufL\left(\frac{1}{u}\right).
\end{align}
Observe that the left hand side of the previous equation is the expression defining the infimum in \eqref{generalI}. We decompose now
\begin{align*}
Lf&=L\left(\frac{uf}{u}\right)\\
&=ufL\left(\frac{1}{u}\right)+2\cB\left(uf,\frac{1}{u}\right)+\frac{1}{u}L(uf),
\end{align*}
which implies
\begin{align*}
ufL\left(\frac{1}{u}\right)&=Lf-2\cB\left(uf,\frac{1}{u}\right)-\frac{1}{u}L(uf)\\
&=Lf-2\cB\left(uf,\frac{1}{u}\right)-\frac{1}{u}\left(uLf+fLu+2\cB(u,f)\right)\\
&=Lf-2\cB\left(uf,\frac{1}{u}\right)-Lf-\frac{f}{u}Lu-\frac{2}{u}\cB(u,f)\\
&=-2\cB\left(uf,\frac{1}{u}\right)-\frac{f}{u}Lu-\frac{2}{u}\cB(u,f).
\end{align*}
where in the second line we applied \eqref{product} to $L(uf)$. Use this last relation in \eqref{eqmin} to deduce
\[
\frac{f}{u}Lu=Lf+2u\cB\left(f,\frac{1}{u}\right)-2\cB\left(uf,\frac{1}{u}\right)-\frac{f}{u}Lu-\frac{2}{u}\cB(u,f),
\]
or equivalently 
\begin{align}
\nonumber 2\frac{f}{u}Lu&=Lf+2u\cB\left(f,\frac{1}{u}\right)-2\cB\left(uf,\frac{1}{u}\right)-\frac{2}{u}\cB(u,f)\\
\label{eqmin2}&=Lf+B(u,f),
\end{align}
where 
\[
B(u,f)=2u\cB\left(f,\frac{1}{u}\right)-2\cB\left(uf,\frac{1}{u}\right)-\frac{2}{u}\cB(u,f)
\]
Let us analyze now the nonlocal transport terms. We have 
\begin{align*}
B(u,f)&=u(x)c_{N,s}\int_{\R^N}(f(x)-f(y))\left(\frac{1}{u(x)}-\frac{1}{u(y)}\right)K(x,y)dy\\
&-c_{N,s}\int_{\R^N}(u(x)f(x)-u(y)f(y))\left(\frac{1}{u(x)}-\frac{1}{u(y)}\right)K(x,y)dy\\
&-c_{N,s}\frac{1}{u(x)}\int_{\R^N}(f(x)-f(y)(u(x)-u(y)))K(x,y)dy\\
&=c_{N,s}\int_{\R^N}u(x)(f(x)-f(y))\frac{(u(y)-u(x))}{u(x)u(y)}K(x,y)dy\\
&-c_{N,s}\int_{\R^N}(u(x)f(x)-u(y)f(y))\frac{(u(y)-u(x))}{u(x)u(y)}K(x,y)dy\\
&+c_{N,s}\int_{\R^N}(f(x)-f(y))u(y)\frac{(u(y)-u(x))}{u(x)u(y)}K(x,y)dy.
\end{align*}
Note that all the integrals have common factor $\frac{(u(y)-u(x))}{u(x)u(y)}$. On the other hand 
\begin{align*}
u(x)(f(x)-f(y))-(u(x)f(x)-u(y)f(y))+(f(x)-f(y))u(y)=u(y)f(x)-u(x)f(y),
\end{align*}
and therefore
\begin{align*}
B(u,f)&=c_{N,s}\int_{\R^N}(u(y)f(x)-u(x)f(y))\frac{(u(y)-u(x))}{u(x)u(y)}dyK(x,y)\\
&=-c_{N,s}\int_{\R^N}\left(\frac{f(x)}{u(x)}-\frac{f(y)}{u(y)}\right)(u(x)-u(y))K(x,y)dy\\
&=-2\cB\left(\frac{f}{u},u\right).
\end{align*}
Coming back to \eqref{eqmin2} we get
\begin{align}\label{eqmin3}
2\frac{f}{u}Lu=Lf-2\cB\left(\frac{f}{u},u\right).
\end{align}
Finally note that, by inspection, a solution of \eqref{eqmin3} is given by $u=\sqrt f$, since by \eqref{product} we have (note that $f>0$)
\begin{align*}
Lf&=L(\sqrt f\cdot \sqrt f)\\
&=2\sqrt fL(\sqrt f) +2\cB(\sqrt f).
\end{align*}
Finally using that the inf in \eqref{generalI} is achieved at $\sqrt f$ and integrating by parts we have  
\begin{align*}
I(\mu)&=-\inf \int_{\R^N}\left(\frac{Lu}{u}\right)f(x)dx\\
&=-\int_{\R^N}\left(\frac{L\sqrt f}{\sqrt f}\right)f(x)dx\\
&=\int_{\R^N}\cB(\sqrt f)dx
\end{align*}
\end{proof}
\begin{remark}
It is quite surprising that the minimum is still achieved at the same point as in the local case, that is, $\sqrt f$. Also note that 
\begin{align*}
\lim\limits_{s\to 2}\cB(\sqrt f)&=|\nabla \sqrt f|^2\\
&=\frac{|\nabla f|^2}{4f},
\end{align*}
and therefore the previous lemma recovers the estimates in \cite{DV}.
\end{remark}

\section{Nonlocal Drift case}
\label{secmainteo}

%\begin{align}
%I(\mu)=-\inf\limits_{\substack {u>0\\
%u\in C^{\infty}(\R^N)}}\int_{\R^N} \left(\frac{\mathcal L_Ku}{u}\right)f(x)dx.
%\end{align} 
%where $\mu$ is a probability measure with density $f$.

We devote this section to prove our main Theorem~\ref{teo1}. Following the dual variational formulation presented in~\eqref{charact2}, we obtain Theorem~\ref{teo1} as a Corollary of the following theorem.
\begin{theorem}\label{mainteo}
Let $A_i(\cdot, \cdot)\in \S^n$ be symmetric, positive definite matrices satisfying $A_i(x,y)=A_i(y,x)$ for $i=1,2$. Denote the associated kernels $K_i$ given by
\[
K_i(x,y)=\frac{1}{|(x-y)^tA_i(x,y)(x-y)|^{(N+2s)/2}}
\]
For $i=1,2$, let $h_i$ be smooth functions and $\mathcal L_{K_i}$ be defined by \eqref{LB} and assume $\osc_{\R^N}h_i<1$. Finally denote by $I_i(\mu)$ the operator \eqref{generalI} associated to $\mathcal L_{K_i}$. Then, if $I_1(\mu)=I_2(\mu)$ for all  measures $\mu$ having compact support then 
\[
A_1(x_0,x_0)=A_2(x_0,x_0).
\]
If in addition the matrices $A_i$ can be decomposed as
\begin{align}\label{sproduct}
A_i(x,y)=\tilde A_i(x)\tilde A_i(y)+\tilde A_i(y)\tilde A_i(x)
\end{align}
then 
\begin{itemize}
\item[i.-] $\tilde A_1=\tilde A_2$
\item[ii.-] There exists a constant $C$ such that $h_1(x)=h_2(x)+C$.
\end{itemize} 
\end{theorem}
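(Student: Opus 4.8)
The plan is to use the dual characterization~\eqref{charact2}--\eqref{generalI} and to test the hypothesis $I_1(\mu)=I_2(\mu)$ against a one-parameter family of compactly supported, absolutely continuous measures $\mu_\lambda=f_\lambda(x)\,dx$ concentrating at an arbitrary point $x_0$ as $\lambda\to0^+$, say $f_\lambda=g_\lambda^2$ with $g_\lambda(x)=\lambda^{-N/2}g((x-x_0)/\lambda)$, $g\in C_c^\infty(\R^N)$ nonnegative and $\|g\|_{L^2}=1$. The aim is to produce a two-sided asymptotic expansion of $I_i(\mu_\lambda)$ in $\lambda$ and to match coefficients for $i=1,2$: the top-order coefficient will only see the ``frozen'' kernel at $x_0$, forcing $A_1(x_0,x_0)=A_2(x_0,x_0)$; under~\eqref{sproduct}, the next coefficient sees the drifts $h_i$ and gives (i) and (ii).

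For the expansion I would build on Lemma~\ref{sinb}. When $h\equiv0$ that lemma---together with the fact that $u\mapsto\int(L_Ku/u)\,d\mu$ is convex along $\log u$, so that the critical point it exhibits is a global minimizer---gives $I(\mu)=\int_{\R^N}\cB_K(\sqrt f)\,dx$. For general $h_i$, the substitution dictated by the local computation recalled after Theorem~\ref{localDV} is $u=\tilde u\,e^{-h_i/2}$: expanding $h_i$, $e^{-h_i/2}$ and $\tilde u$ inside the kernel and using the product rule~\eqref{product}, one checks that the weight $e^{-h_i/2}$ is precisely the one cancelling, to leading order, the $\tilde u$-dependent part of the transport term $\cB_{K_i}(\cdot,h_i)$. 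This reduces the minimization to the drift-free one for $\tilde u$ plus a zeroth-order correction $R_i(h_i)$ built from $L_{K_i}h_i$ and $\cB_{K_i}(h_i)$ (the nonlocal counterpart of $\tfrac12\Delta h+\tfrac14|\nabla h|^2$), so that for $\mu=f\,dx$
\[
I_i(\mu)=\int_{\R^N}\cB_{K_i}(\sqrt f)\,dx+\int_{\R^N}R_i(h_i)(x)\,f(x)\,dx+\mathcal E_i(\mu),
\]
with $\mathcal E_i(\mu)$ a genuinely lower-order nonlocal remainder, of ``order'' $\max\{2s-1,0\}$ relative to $L_{K_i}$ by the Lipschitz and boundedness assumptions on $h_i$. \emph{Making this rigorous is the main obstacle}: unlike the second-order case of~\cite{DV}, the substitution $u=\tilde u e^{-h_i/2}$ does not produce an exact identity, so one must show both that $\sqrt{f_\lambda}\,e^{-h_i/2}$ is asymptotically optimal in~\eqref{generalI} and that $\mathcal E_i(\mu_\lambda)$ stays strictly subleading as $\lambda\to0$; this needs new nonlocal estimates, of the energy type already used in Proposition~\ref{maxp} and Lemma~\ref{lemaLp}, and is the content of the error lemmas that follow.

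Granting the expansion, the conclusion proceeds as follows. Rescaling $x=x_0+\lambda\xi$, $y=x_0+\lambda\eta$ and using the continuity of $A_i$, one finds that $\int_{\R^N}\cB_{K_i}(\sqrt{f_\lambda})\,dx$ equals $\lambda^{-2s}$ times the Gagliardo-type quadratic form $Q_{A_i(x_0,x_0)}(g)$ associated with the frozen kernel $\kappa_A(z)=|z^tAz|^{-(N+2s)/2}$, up to lower-order terms, whereas $\int_{\R^N}R_i(h_i)\,f_\lambda\,dx=R_i(h_i)(x_0)+o(1)$ stays bounded. Hence the $\lambda^{-2s}$-coefficient of $I_i(\mu_\lambda)$ depends only on $A_i(x_0,x_0)$, and matching it for all admissible profiles $g$ forces $\kappa_{A_1(x_0,x_0)}\equiv\kappa_{A_2(x_0,x_0)}$, hence $z^tA_1(x_0,x_0)z=z^tA_2(x_0,x_0)z$ for all $z$, i.e. $A_1(x_0,x_0)=A_2(x_0,x_0)$. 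As $x_0$ is arbitrary this holds on all of $\R^N$. If moreover~\eqref{sproduct} holds, then $A_i(x_0,x_0)=2\tilde A_i(x_0)^2$, so $\tilde A_1(x_0)^2=\tilde A_2(x_0)^2$ and, taking positive square roots of these positive definite matrices, $\tilde A_1=\tilde A_2=:\tilde A$; in particular $A_1\equiv A_2$ and $K_1\equiv K_2=:K$, which is (i).

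With $K_1\equiv K_2=K$ the diffusion terms $\int_{\R^N}\cB_K(\sqrt{f_\lambda})\,dx$ now coincide and cancel in the identity $I_1(\mu_\lambda)=I_2(\mu_\lambda)$, leaving $\int R_1(h_1)\,f_\lambda\,dx=\int R_2(h_2)\,f_\lambda\,dx$ up to subleading errors; letting $\lambda\to0$ yields $R_1(h_1)(x_0)=R_2(h_2)(x_0)$ for every $x_0\in\R^N$. Writing $w=h_1-h_2$ and using bilinearity and symmetry of $\cB_K$ (so that $\cB_K(h_1)-\cB_K(h_2)=\cB_K(w,h_1+h_2)$), this becomes the linear nonlocal equation
\[
L_K w+\cB_K\!\left(w,\frac{h_1+h_2}{2}\right)=0\qquad\text{in }\R^N,
\]
i.e. $w$ is a bounded entire solution of $\cL'w=0$ for an operator $\cL'$ of type~\eqref{LB} with drift potential $(h_1+h_2)/2$, whose oscillation is $<1$ by~\eqref{h0}. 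A Liouville-type theorem for such degenerate-elliptic nonlocal operators---the analogue of the argument recalled after Theorem~\ref{localDV}, obtainable from the interior regularity estimates used in Lemma~\ref{lemaLp} together with the maximum principle of Proposition~\ref{maxp}---then forces $w$ to be constant, which is (ii). The two steps I expect to be most delicate are the uniform two-sided control of the expansion of $I_i(\mu_\lambda)$ and the direct proof of the Liouville property for $\cL'$.
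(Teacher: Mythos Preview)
Your scaling-and-matching strategy is the paper's, but the change of variables you choose diverges from it in a way that creates the very two obstacles you flag as delicate, while the paper's choice removes them.

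The paper does not use $u=\tilde u\,e^{-h/2}$. Motivated by Lemma~\ref{sinb}, it substitutes $u=\sqrt f\, v$ into~\eqref{generalI} and obtains an \emph{exact} identity (Lemma~\ref{lemaerrornolocal}):
\[
I(\mu)=\int_{\R^N}\cB_K(\sqrt f)\,dx-\tfrac{1}{2}\int_{\R^N}\cB_K(f,h)\,dx-\mathcal{E},
\]
with $\mathcal{E}$ an explicit infimum over $v$. A second substitution $v=e^w$ (Lemma~\ref{errorpositivo}) rewrites $\mathcal{E}$ so that $0\le\mathcal{E}\le\iint\sqrt f(x)\sqrt f(y)\,\delta^2(h,x,y)\,K\,dy\,dx$, and this bound scales like $\lambda^{2-2s}$; the two-sided control of $I(\mu_\lambda)$ is then automatic. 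Your substitution, by contrast, leaves cross terms such as $\cB_K(\tilde u,e^{-h/2})$ and $\cB_K(\tilde u\,e^{-h/2},h)$ that do \emph{not} decouple into a drift-free minimization in $\tilde u$ plus a pure $R_i(h_i)$; showing that the residual $\tilde u$-dependence is negligible \emph{uniformly over the minimizing class} is exactly the hard lemma you are trying to sidestep, and it is not clear it holds.

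The second payoff of the paper's route is at the drift step. Since $\int\cB_K(f_\lambda,h)\,dx=\int f_\lambda\, L_K h\,dx\to L_Kh(x_0)$, once the diffusions are matched one obtains directly $L_K h_1=L_K h_2$ in $\R^N$, i.e.\ $L_K w=0$ for $w=h_1-h_2$ bounded, and Liouville follows from scaling plus the Harnack inequality for the pure diffusion $L_K$ (Lemma~\ref{Liouville}). Your route instead lands on $L_Kw+\cB_K\!\big(w,(h_1+h_2)/2\big)=0$, for which a Liouville theorem is not established here: Proposition~\ref{maxp} is a comparison principle on bounded domains, and no Harnack inequality for operators of the form~\eqref{LB} with nonlocal drift is proved. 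Finally, the step ``$Q_{A_1(x_0,x_0)}(g)=Q_{A_2(x_0,x_0)}(g)$ for all $g$ implies $A_1(x_0,x_0)=A_2(x_0,x_0)$'' is less immediate than you indicate; in the paper this is Lemma~\ref{Fourier}, proved by writing the frozen energy via the Fourier symbol $|\mathrm{Det}\,A|^{-1/2}\langle A^{-1}\xi,\xi\rangle^s$ and testing with anisotropically rescaled and rotated profiles to extract all entries of $A^{-1}$.
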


\begin{remark}
Observe that in general $A(x)A(y)\neq A(y)A(x)$ and therefore the decomposition in the statement of Theorem \ref{mainteo} is the symmetrization of the separable variable case. 
\end{remark}

The proof of Theorem \ref{mainteo} consists on several steps. The first step is to establish an estimate of $I(\mu)$ based on the fact that, in the absence of the nonlocal drift ($h=0$), the minimizer is achieved at $\sqrt f$ whenever $\sqrt f$ is smooth, see Lemma \ref{sinb}.  

Throughout this section we will use repeatedly the following identity. Given a symmetric function $k: \R^N \times \R^N \to \R$, then 
\begin{align}\label{symid}
\iint_{\R^{2N}} k(x,y) f(x)dx dy = \frac{1}{2} \iint_{\R^{2N}} k(x,y) (f(x) + f(y))dx dy,
\end{align}
each time $k$ and $f$ satisfy appropriate assumptions to perform Fubini's Theorem.

The next two lemmas are the generalization of Lemma \ref{sinb} and combined are the nonlocal analogous of Lemma 3.3 in \cite{DV}. We point out that Lemma \ref{lemaerrornolocal} and Lemma \ref{errorpositivo} hold for general symmetric  kernels $K$.

\medskip

\begin{lemma}\label{lemaerrornolocal}
Let $\mathcal L$ be given by \eqref{LB} and $I$ as in~\eqref{generalI}, then 
\[
I(\mu)=\int_{\R^N}\left(\cB(\sqrt f)(x)-\frac{1}{2}\cB(f,h))\right)dx-\mathcal{E},
\]
where 
\begin{equation}\label{errornolocal}
\begin{split}
\mathcal{E}=\inf \limits_{\substack {v>0\\
		v\in C^{\infty}(\R^N)}} & \left\{ \frac{1}{2}\int_{\R^{2N}}\frac{(v(x)-v(y))^2}{v(x)v(y)}\sqrt f(x)\sqrt f(y)K(x,y)dydx\right.\\
&\left.+\frac{1}{4}\int_{\R^{2N}}\frac{\sqrt{f}(x)\sqrt{f}(y)(v^2(x)-v^2(y))(h(x)-h(y))}{v(x)v(y)}K(x,y)dydx\right\}
\end{split}
\end{equation}
\end{lemma}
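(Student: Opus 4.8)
\emph{Plan.} The plan is to carry out a ground-state--type substitution $u=\sqrt f\, v$ in the functional defining $I(\mu)$ and then reorganize the resulting double integrals by means of the symmetrization identity~\eqref{symid}. Write $\mu=f\,dx$ and $g=\sqrt f$. As in Lemma~\ref{sinb} we work under the standing smoothness and positivity assumptions on $g$ which make $u\mapsto u/g$ a bijection of $\{u\in C^\infty(\R^N):u>0\}$ onto itself; consequently
\[
I(\mu)=-\inf_{\substack{v>0\\ v\in C^\infty(\R^N)}}\int_{\R^N}\frac{\mathcal L(gv)}{gv}\,f\,dx ,
\]
and everything reduces to computing the integrand $\frac{\mathcal L(gv)}{gv}f=\frac{L_K(gv)}{gv}f+\frac{\cB(gv,h)}{gv}f$ and identifying the pieces.

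For the diffusion term, using only the definition of $L_K$ and the identity $f/(gv)=g/v$,
\[
\int_{\R^N}\frac{L_K(gv)}{gv}\,f\,dx=\iint_{\R^{2N}}\bigl(g(y)v(y)-g(x)v(x)\bigr)\frac{g(x)}{v(x)}\,K(x,y)\,dy\,dx ,
\]
the inner integral being a principal value. Applying~\eqref{symid} symmetrizes the integrand to $\tfrac12\bigl(g(y)v(y)-g(x)v(x)\bigr)\bigl(\tfrac{g(x)}{v(x)}-\tfrac{g(y)}{v(y)}\bigr)$, which is $O(|x-y|^2)$ near the diagonal, so the double integral becomes absolutely convergent. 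A short algebraic manipulation (essentially completing the square) gives
\[
\bigl(g(y)v(y)-g(x)v(x)\bigr)\Bigl(\tfrac{g(x)}{v(x)}-\tfrac{g(y)}{v(y)}\Bigr)=g(x)g(y)\frac{(v(x)-v(y))^2}{v(x)v(y)}-(g(x)-g(y))^2 ,
\]
and since $\tfrac12\iint(g(x)-g(y))^2K=\int\cB(\sqrt f)\,dx$ we obtain
\[
\int_{\R^N}\frac{L_K(gv)}{gv}\,f\,dx=\frac12\iint_{\R^{2N}}\frac{(v(x)-v(y))^2}{v(x)v(y)}\sqrt f(x)\sqrt f(y)K(x,y)\,dy\,dx-\int_{\R^N}\cB(\sqrt f)\,dx .
\]
(The same expression can also be reached via the product rule~\eqref{product} and the computation in Lemma~\ref{sinb}, but the direct symmetrization is shorter.)

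For the transport term I would proceed identically: expanding $\cB(gv,h)$, using $f/(gv)=g/v$ and symmetrizing via~\eqref{symid} turn $\int_{\R^N}\frac{\cB(gv,h)}{gv}f\,dx$ into
\[
\frac14\iint_{\R^{2N}}\Bigl[g(x)g(y)\frac{v^2(y)-v^2(x)}{v(x)v(y)}+f(y)-f(x)\Bigr](h(y)-h(x))\,K(x,y)\,dy\,dx .
\]
The part carrying $f(y)-f(x)$ equals exactly $\tfrac12\int_{\R^N}\cB(f,h)\,dx$, while the part carrying $g(x)g(y)\tfrac{v^2(y)-v^2(x)}{v(x)v(y)}$ is, after rewriting $(v^2(y)-v^2(x))(h(y)-h(x))=(v^2(x)-v^2(y))(h(x)-h(y))$, precisely the second integral appearing in $\mathcal E$. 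Adding the two contributions gives
\[
\int_{\R^N}\frac{\mathcal L(gv)}{gv}\,f\,dx=-\int_{\R^N}\Bigl(\cB(\sqrt f)-\tfrac12\cB(f,h)\Bigr)dx+\mathcal E(v),
\]
where $\mathcal E(v)$ is exactly the bracketed functional in~\eqref{errornolocal}; taking $-\inf_v$ of both sides yields the stated formula.

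The computations above are elementary, so the only real work is the analytic bookkeeping, which is the step I expect to be the main obstacle: one must check that $\cB(\sqrt f)$, $\cB(f,h)$ and the two integrals defining $\mathcal E$ are finite and that Fubini and the symmetrization~\eqref{symid} are legitimate despite the singular kernel~\eqref{eliptic} (the principal values being handled by symmetrizing over $\{|x-y|>\varepsilon\}$ first and letting $\varepsilon\to0$). This rests on the fact that every integrand, once symmetrized, vanishes to second order on the diagonal, so that the singularity $|x-y|^{-N-2s}$ is integrable for $s<1$, together with the decay of $f$ and the boundedness and $C^\alpha$ (or Lipschitz) regularity of $h$. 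One must also make sure the substitution $u=\sqrt f\,v$ does not change the value of the infimum, which is exactly where the smoothness and strict positivity of $\sqrt f$ built into the hypotheses (cf. Lemma~\ref{sinb}) are used.
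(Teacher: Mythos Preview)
Your proof is correct and follows essentially the same route as the paper: the ground-state substitution $u=\sqrt f\,v$ followed by the symmetrization identity~\eqref{symid} to isolate the $v$-independent pieces $\int\cB(\sqrt f)$ and $\tfrac12\int\cB(f,h)$. The only cosmetic difference is that for the diffusion part the paper expands $L_K(\sqrt f\,v)$ via the product rule~\eqref{product} and then integrates by parts before symmetrizing, whereas you symmetrize the raw double integral first and apply the single algebraic identity $(g(y)v(y)-g(x)v(x))\bigl(\tfrac{g(x)}{v(x)}-\tfrac{g(y)}{v(y)}\bigr)=g(x)g(y)\tfrac{(v(x)-v(y))^2}{v(x)v(y)}-(g(x)-g(y))^2$; your shortcut is a bit cleaner but the content is the same.
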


\begin{proof} 
Assume first that $f > 0$ in $\R^N$, the general case will follow by an approximation argument. Let
\begin{align*}
I := \int \frac{L_K u(x)}{u(x)} f(x)dx,
\end{align*}	
and consider the change of variables $u = \sqrt{f} v$ for $v > 0$ smooth. By the product rule~\eqref{product} we have
\begin{equation}\label{I}
\begin{split}
I  
%& \int \frac{L_K (v \sqrt{f})(x)}{v(x)} \sqrt f(x)dx \\
%= & \int \frac{v(x)L_K (\sqrt{f})(x) + \sqrt{f}(x) L_K(v)(x) + 2\cB(\sqrt{f}, v)(x)}{v(x)} \sqrt f(x)dx \\
= & \int L_K (\sqrt{f})(x)\sqrt f(x)dx + \int \frac{f}{v}(x) L_K(v)(x)dx + 2 \int \cB(\sqrt{f}, v)(x) \frac{\sqrt f}{v}(x)dx \\
=: & \int L_K (\sqrt{f})(x)\sqrt f(x)dx + I_1.
\end{split}
\end{equation}

%Therefore we can concentrate in the first term, since the second goes to zero by the dominated convergence Theorem. Moreover as $\varepsilon\to 0$ we have
%\begin{align*}
%\lim\limits_{\varepsilon\to 0}\int_{\R^N}\frac{L_K(v\sqrt{f+\varepsilon})(x)}{v(x)}\sqrt{f(x)+\varepsilon}dx=\int_{\R^N}\frac{L_K(v\sqrt{f})(x)}{v(x)}\sqrt{f(x)}dx.
%\end{align*}
%Now we apply the product rule to deduce
%\begin{align*}
%\int_{\R^N}\frac{L_K(v\sqrt{f})(x)}{v(x)}\sqrt{f(x)}dx&=\int_{\R^N} \left[f(x)\frac{L_Kv(x)}{v(x)}+\sqrt{f(x)}L_K\sqrt{f}(x)\right]dx\\
%&+2\int_{\R^N}\cB_K\left(v\sqrt{f},\frac{\sqrt{f}}{v}\right)(x)dx.
%\end{align*}

Note that the first term in the right hand side is independent of $v$. After integrating by parts we get 
\begin{align}\label{termino2}
\int \sqrt{f}(x)L_K \sqrt{f}(x)dx = -\int\cB_K(\sqrt f)(x)dx. 
\end{align}
We now concentrate on $I_1$ in~\eqref{I}, that is, the integrals depending on $v$. Integrating by parts once again we get
\begin{align*}
\int_{\R^N} f(x)\frac{L_Kv(x)}{v(x)}dx=-\int_{\R^N}\cB_K\left(v,\frac{f}{v}\right)(x)dx.
\end{align*} 

On the other hand, using the symmetry identity \eqref{symid}, we have
\begin{align*}
& 2\int_{\R^N}\cB_K(\sqrt{f},v)(x)\frac{\sqrt{f}}{v}(x)dx \\
= & \int_{\R^{2N}} (v(x)-v(y))(\sqrt{f}(x)-\sqrt{f}(y))\left(\frac{\sqrt{f}}{v}(x)+\frac{\sqrt{f}}{v}(y)\right)K(x,y)dydx.
\end{align*}

Therefore, combining the last two terms we get
\begin{align*}
I_1 = %&\int_{\R^N} f(x)\frac{L_Kv(x)}{v(x)}dx+2\int_{\R^N}\cB_K\left(v\sqrt{f},\frac{\sqrt{f}}{v}\right)(x)\frac{\sqrt f(x)}{v(x)}dx=\\
&-\frac{1}{2}\int_{\R^{2N}}(v(x)-v(y))\left\{\frac{f(x)}{v(x)}-\frac{f(y)}{v(y)}-(\sqrt{f(x)}-\sqrt{f(y)})\left[\frac{\sqrt{f(x)}}{v(x)}+\frac{\sqrt{f(y)}}{v(y)}\right]\right\} K(x,y)dydx\\ &=-\frac{1}{2}\int_{\R^{2N}}(v(x)-v(y))\left[\frac{\sqrt{f(x)}\sqrt{f(y)}}{v(x)}-\frac{\sqrt{f(x)}\sqrt{f(y)}}{v(y)}\right]K(x,y)dydx\\
\nonumber &=-\frac{1}{2}\int_{\R^{2N}}(v(x)-v(y))\sqrt{f(x)}\sqrt{f(y)}\left(\frac{v(y)-v(x)}{v(y)v(x)}\right)K(x,y)dydx\\
&=\frac{1}{2}\int_{\R^{2N}}\frac{(v(x)-v(y))^2}{v(x)v(y)}\sqrt{f(x)}\sqrt{f(y)}K(x,y)dydx.
\end{align*}

Taking into account this last computation and~\eqref{termino2}, and replacing them into~\eqref{I}, we conclude that
\begin{equation}\label{Idif}
I = -\int_{\R^N} \cB_K(\sqrt f)(x)dx + \frac{1}{2}\int_{\R^{2N}}\frac{(v(x)-v(y))^2}{v(x)v(y)}\sqrt{f}(x)\sqrt{f}(y)dydx.
\end{equation}
	
\medskip

Now we deal with the term involving the nonlocal transport. Recall that $u = \sqrt f v$, we have
\begin{equation*}
II = \int \cB(\sqrt{f}v,h)(x)\frac{\sqrt{f}(x)}{v(x)}dx.
\end{equation*}

Use identity \eqref{symid} to get

\begin{align*}
II =&\frac{1}{4}\int_{\R^{2N}}\left(\frac{\sqrt{f}(x)}{v(x)}+\frac{\sqrt{f}(y)}{v(y)}\right)(\sqrt{f}(x)v(x)-\sqrt{f}(y)v(y))(h(x)-h(y))K(x,y)dydx\\
=&\frac{1}{4}\int_{\R^{2N}}\frac{(\sqrt{f}(x)v(y)+\sqrt{f}(y)v(x))(\sqrt{f}(x)v(x)-\sqrt{f}(y)v(y))(h(x)-h(y))}{v(x)v(y)}K(x,y)dydx\\
=&\frac{1}{4}\int_{\R^{2N}}\frac{\sqrt{f}(x)\sqrt{f}(y)(v^2(x)-v^2(y))(h(x)-h(y))}{v(x)v(y)} K(x, y)dydx\\
&+ \frac{1}{4}\int_{\R^{2N}}(f(x)-f(y))(h(x)-h(y))K(x,y)dydx,
\end{align*}
from which, by the definition of $\cB_K$, we conclude that
\[
II = \frac{1}{4}\int_{\R^{2N}}\frac{\sqrt{f}(x)\sqrt{f}(y)(v^2(x)-v^2(y))(h(x)-h(y))}{v(x)v(y)} K(x,y)dydx + \frac{1}{2}\int_{\R^N}\cB(f,h)dx.
\]

The last equality combined with~\eqref{Idif} concludes the estimate~\eqref{errornolocal}.

When $f$ does not have compact support one can perform the change $u=\sqrt{f+\varepsilon} v$ for $\varepsilon>0$ and proceed in an analogous way.

%This change of variables leads to terms of the form 
%\begin{align*}
%\int_{\R^N}\frac{L_K(v\sqrt{f+\varepsilon})(x)}{v(x)\sqrt{f(x)+\varepsilon}}f(x)dx=\int_{\R^N}\frac{L_K(v\sqrt{f+\varepsilon})(x)}{v(x)}\left(\sqrt{f(x)+\varepsilon}-\frac{\varepsilon}{\sqrt{f(x)+\varepsilon}}\right)dx,
%\end{align*}
%and observe that the last term in bounded by $c^{-1}\sqrt \varepsilon$, {(\red assuming $v>c$)}. Therefore we can concentrate in the first term, since the second goes to zero by the dominated convergence Theorem. The first term is of the form 
%\[
%\int_{\R^N}\frac{L_K(v f_\varepsilon)(x)}{v(x)}\sqrt{f_\varepsilon}(x)dx
%\]
%and can be analyzed as in the first steps in the proof, see \eqref{I}. Afterwards one may take $\varepsilon\to 0$ to deduce the desired result. Also note that an analogous treatment can be done to the nonlocal transport term.
\end{proof}

A key step in the proof of Theorem \ref{mainteo} is taking a sequence of measures converging to a Dirac delta. In order to pass to the limit we need to able to control the error term in Lemma \ref{lemaerrornolocal}. To do so, we first refine the estimate on the error in Lemma \ref{lemaerrornolocal} and rewrite it as a positive error plus an additional term.

Let us introduce some notation. For any function $u$ and $x, y\in\R^N$ we denote $\delta(u,x,y)=u(x)-u(y)$, and $\delta^2(u,x,y)=(u(x)-u(y))^2$

\begin{lemma}\label{errorpositivo}
Let $\mathcal{E}$ be given by \eqref{errornolocal}. If $\mathrm{osc}_{\R^N} (h) < 1$, then 
\begin{equation}
\begin{split}
\mathcal{E}=&\int_{\R^{2N}}\sqrt{f}(x)\sqrt{f}(y)\delta^2(h,x,y)K(x,y)dydx \\
& -\inf\limits_{w\in C^\infty(\R^N)}\int_{\R^{2N}}\sqrt{f}(x)\sqrt{f}(y)Q(h,w,x,y)K(x,y)dxdy,
\end{split}
\end{equation}
where $Q\geq 0$ is given by
\[
Q(h,w,x,y)=\delta^2(h,x,y)+\sinh(\delta(w,x,y))\delta(h,x,y)+\cosh(\delta(w,x,y))-1.
\]
\end{lemma}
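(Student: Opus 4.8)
The plan is to start from the expression for $\mathcal E$ in \eqref{errornolocal}, perform the substitution $v = e^{w/2}$ (equivalently $w = 2\log v$, admissible since $v>0$ is smooth), and recognize the two integrands as hyperbolic functions of $\delta(w,x,y)$. Indeed, with $v(x)/v(y) = e^{(w(x)-w(y))/2}$ one computes
\[
\frac{(v(x)-v(y))^2}{v(x)v(y)} = e^{(w(x)-w(y))/2} - 2 + e^{-(w(x)-w(y))/2} = 2\big(\cosh(\delta(w,x,y)) - 1\big),
\]
and similarly
\[
\frac{v^2(x)-v^2(y)}{v(x)v(y)} = e^{(w(x)-w(y))} - e^{-(w(x)-w(y))}\cdot\frac{1}{?}
\]
— more precisely $\dfrac{v^2(x)-v^2(y)}{v(x)v(y)} = \dfrac{v(x)}{v(y)} - \dfrac{v(y)}{v(x)} = 2\sinh(\delta(w,x,y))$. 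Substituting these into \eqref{errornolocal} and using the symmetry of $K$ and of $\sqrt f(x)\sqrt f(y)$ to symmetrize the $\sinh$ term, the infimum over $v>0$ smooth becomes an infimum over $w\in C^\infty(\R^N)$ of
\[
\int_{\R^{2N}} \sqrt f(x)\sqrt f(y)\Big(\tfrac12\cdot 2(\cosh\delta(w,x,y)-1) + \tfrac14\cdot 2\sinh(\delta(w,x,y))\delta(h,x,y)\Big)K(x,y)\,dy\,dx,
\]
i.e. $\mathcal E = \inf_w \int_{\R^{2N}}\sqrt f(x)\sqrt f(y)\big(\cosh\delta(w,x,y) - 1 + \tfrac12\sinh(\delta(w,x,y))\delta(h,x,y)\big)K(x,y)\,dy\,dx$.

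Next I would add and subtract the fixed quantity $\int_{\R^{2N}}\sqrt f(x)\sqrt f(y)\delta^2(h,x,y)K(x,y)\,dy\,dx$ (which does not depend on $w$, so it can be pulled out of the infimum), to obtain
\[
\mathcal E = \int_{\R^{2N}}\sqrt f(x)\sqrt f(y)\delta^2(h,x,y)K\,dy\,dx \;-\; \sup_w \int_{\R^{2N}}\sqrt f(x)\sqrt f(y)\big(\delta^2(h,x,y) - \cosh\delta(w,x,y) + 1 - \tfrac12\sinh(\delta(w,x,y))\delta(h,x,y)\big)K\,dy\,dx.
\]
The bracket is exactly $\delta^2(h,x,y) - \big(\cosh\delta(w,x,y)-1\big) - \tfrac12\sinh(\delta(w,x,y))\delta(h,x,y)$; replacing $w$ by $-w$ (which is a bijection of $C^\infty$ and flips the sign of the odd $\sinh$ term while leaving $\cosh$ fixed) turns the $\sup_w$ into $-\inf_w$ of the integrand $Q(h,w,x,y) = \delta^2(h,x,y) + \sinh(\delta(w,x,y))\delta(h,x,y) + \cosh(\delta(w,x,y)) - 1$, up to a factor of $\tfrac12$ on the middle term which I would absorb by rescaling $h$'s contribution or, more carefully, by tracking constants through the computation of $II$ in Lemma~\ref{lemaerrornolocal} — this bookkeeping of the factor $\tfrac12$ versus $1$ is the one place I would be most careful. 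This gives the stated identity.

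It remains to prove $Q(h,w,x,y)\ge 0$. Writing $a = \delta(h,x,y)$ and $t = \delta(w,x,y)$, we must show $a^2 + a\sinh t + \cosh t - 1 \ge 0$ for all $t\in\R$ whenever $|a| \le \osc_{\R^N}(h) < 1$. View this as a quadratic in $a$: its discriminant is $\sinh^2 t - 4(\cosh t - 1) = \cosh^2 t - 1 - 4\cosh t + 4 = \cosh^2 t - 4\cosh t + 3 = (\cosh t - 1)(\cosh t - 3)$. For $1\le \cosh t\le 3$ the discriminant is $\le 0$, so $Q\ge 0$ for every $a$. For $\cosh t > 3$ the quadratic has two real roots, both of the same sign as $-\sinh t / 2\cdot(\text{something})$; one checks the roots satisfy $|a|\ge 1$: indeed the product of the roots is $\cosh t - 1 > 2 > 1$ and their sum is $-\sinh t$, so if both had absolute value $< 1$ their product would be $< 1$, a contradiction — hence $Q(h,w,x,y)\ge 0$ holds on the region $|a|<1$, which is guaranteed by $\osc_{\R^N}(h)<1$. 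The main obstacle is not conceptual but the careful substitution and constant-tracking through the two hyperbolic identities; the positivity of $Q$ is then a short elementary estimate. Throughout, the integrability needed to manipulate these integrals and apply Fubini follows from the smoothness and boundedness hypotheses on $f$ and $h$ together with the ellipticity \eqref{eliptic}, exactly as in Lemma~\ref{lemaerrornolocal}.
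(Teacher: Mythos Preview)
Your approach is essentially the paper's: substitute $v=e^w$, recognize the two ratios as $2(\cosh\delta w-1)$ and $2\sinh\delta w$, then add and subtract $\delta^2(h,x,y)$. Two small slips: you write $v=e^{w/2}$ but then compute as if $v=e^w$ (the latter is what gives $\cosh(\delta w)$ rather than $\cosh(\delta w/2)$), and the step ``replacing $w$ by $-w$ turns the $\sup_w$ into $-\inf_w$'' is not a valid manipulation. What actually happens is simply $\mathcal E=\inf_w\int\Theta=-\int\delta^2(h)+\inf_w\int(\Theta+\delta^2(h))$, which already has the right structure; your worry about the coefficient $\tfrac12$ versus $1$ on the $\sinh$ term is well-founded, since the computation genuinely produces $\tfrac12$ and the paper's own positivity argument in fact works with that coefficient.

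Where your argument differs from the paper is in proving $Q\ge 0$. The paper treats $q(r,h)=\cosh r-1+\tfrac12 h\sinh r+Ch^2/2$ as a function on $\R\times[-1,1]$ and locates its minima via a Lagrange-multiplier analysis, choosing $C$ so that the minimum is zero. Your discriminant approach is more direct, but the final step is incomplete: from $a_-a_+=\cosh t-1>1$ you only conclude that \emph{at least one} root has modulus $\ge 1$, which does not by itself exclude a root in $(-1,1)$. The clean fix is to note that $q(\pm 1,t)=e^{\pm t}>0$ and that the vertex of the parabola sits at $a=-\tfrac12\sinh t$; when $\cosh t>3$ one has $|\sinh t|>2\sqrt2>2$, so the vertex lies outside $[-1,1]$, $q$ is monotone on $[-1,1]$, and positivity at the endpoints gives $q\ge 0$ there. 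With that patch your argument is complete and arguably simpler than the paper's.
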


\begin{proof}
Let $v>0$ be any smooth function and rewrite it as $v(x)=e^{w(x)}$ for some smooth function $w$. Then we have 
\begin{align*}
\frac{(v(x)-v(y))^2}{v(x)v(y)}&=\left(\frac{v(x)}{v(y)}+\frac{v(x)}{v(y)}-2\right)\\
&=(e^{w(x)-w(y)} +e^{w(y)-w(x)}-2)\\
&=2(\cosh(\delta(w,x,y))-1).
\end{align*}
On the other hand
\begin{align*}
\frac{(v^2(x)-v^2(y)))}{v(x)v(y)}&=\left(\frac{v(x)}{v(y)}-\frac{v(x)}{v(y)}\right)\\
&=\left(e^{w(x)-w(y)}-e^{w(y)-w(x)}\right)\\
&=2\sinh(\delta(w,x,y)).
\end{align*}

Taking these computations into consideration in formula \eqref{errornolocal} we get
\begin{align*}
\mathcal{E}=-\inf\limits_{w\in C^{\infty}(\R^N)}\int_{\R^{2N}}\Theta(x,y) \sqrt f(x)\sqrt f(y) K(x,y)dxdy,
\end{align*}
where
\[
\Theta(x,y)= \cosh(\delta(w,x,y))-1 + \frac{1}{2}\sinh(\delta(w,x,y))\delta(h,x,y) 
\]

Let $C > 0$, to be fixed, and let us analyze the function 
$$
q(r,h) = \cosh(r) - 1 + \frac{1}{2} \sinh(r)h + Ch^2/2, \quad (r,h) \in \R \times [-1,1].
$$ 

Notice that $q(r,h) \to +\infty$ as $r \to \infty$ uniformly in $h$, and therefore it attains its global minima in $\R \times [-1,1]$. The necessary optimality conditions on $q$ lead to the existence of a tuple $\bar X = (\bar r, \bar h, \mu_1, \mu_2)$ with multipliers $\mu_1, \mu_2 \geq 0$ satisfying the set of equations
\begin{align*}
& \sinh(\bar r) + \frac{1}{2} \cosh(\bar r) \bar h = 0, \\
& \frac{1}{2} \sinh(\bar r) +  C\bar h - \mu_1 + \mu_2 = 0, \\
& \mu_1(\bar h - 1) = 0; \quad \mu_2 (\bar h + 1) = 0.
\end{align*}

Notice that $\bar X = (0,0,0,0)$ is an interior critical point for all $C$, with critical value equal to zero. On the other hand, taking $C > \frac{1}{2\sqrt{3}}$, we have no interior critical point. Therefore we consider such a $C$ from now on, from which we necessarily have $\bar h = \pm 1$. Then, taking $C = 2$, we conclude that the minima of $q$ is equal to zero. We add and subtract $\delta^2(h,x,y)$ inside the square brackets in the last integral expression for $\mathcal E$ to conclude the result.

\end{proof}

The next step is to take a sequence of measures $d\mu$ concentrating to a Dirac delta. This step will allow us to localize the coefficients $A_i(x,y)$ of Theorem \ref{mainteo}.

\begin{lemma}\label{difusionigual}
Assume that the kernels $K_i$ satisfy the general hypothesis of Theorem \ref{mainteo}, that is
\[
K_i(x,y)=\frac{1}{|(x-y)^t A_i(x,y)(x-y)|^{N+2s}},
\]
where $A_i(x,y)=A_i(y,x)$ is a symmetric matrix and also satisfies
\[
c_1|\xi|^2\leq \xi^tA_i(x,y)\xi\leq C_1|\xi|^2.
\] 
Then, under the hypothesis of Theorem \ref{mainteo} we have that 
\[
\int_{\R^{N}}\cB_{A_1(x_0,x_0)}(\sqrt f)(x)dx=\int_{\R^{N}}\cB_{A_2(x_0,x_0)}(\sqrt f)(x)dx,
\]
for every smooth probability density $f$ with compact support with $\sqrt f$ a $C^{2s+\alpha}(\R^N)$ for some $\alpha>0$.

Here 
\[
\int_{\R^{N}}\cB_{A_i(x_0,x_0)}(\sqrt f)(x)dx=\int_{\R^{2N}}\frac{(\sqrt{f}(x)-\sqrt{f}(y))^2dydx}{|(x-y)^t A_i(x_0,x_0)(x-y)|^{N+2s}}.
\]

\end{lemma}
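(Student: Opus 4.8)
The plan is to combine Lemmas~\ref{lemaerrornolocal} and~\ref{errorpositivo} with a suitable rescaling argument. Fix $x_0 \in \R^N$ and a smooth compactly supported probability density $f$ with $\sqrt f \in C^{2s+\alpha}$. For a parameter $\e > 0$ consider the rescaled densities $f_\e(x) := \e^{-N} f((x - x_0)/\e)$, which are again smooth probability densities concentrating at $x_0$ as $\e \to 0$. By hypothesis $I_1(\mu_\e) = I_2(\mu_\e)$ where $d\mu_\e = f_\e \, dx$, so using the formula from Lemma~\ref{lemaerrornolocal} for both operators and subtracting, the difference reduces to
\begin{equation*}
\int_{\R^N} \cB_{K_1}(\sqrt{f_\e}) \, dx - \int_{\R^N} \cB_{K_2}(\sqrt{f_\e}) \, dx = \big(\text{transport terms}\big) + (\mathcal E_1 - \mathcal E_2),
\end{equation*}
and the first task is to show that after the change of variables $x \mapsto x_0 + \e x$ and appropriate normalization, every term except the leading diffusion term is of lower order in $\e$. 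First I would carry out the scaling in $\cB_{K_i}(\sqrt{f_\e})$: writing $g = \sqrt f$, the quantity $\int \cB_{K_i}(\sqrt{f_\e})\,dx$ becomes, after substitution,
\begin{equation*}
\iint_{\R^{2N}} \frac{(g(x) - g(y))^2}{|\,\e(x-y)^t A_i(x_0 + \e x, x_0 + \e y)(x-y)\,|^{(N+2s)/2} \cdot \e^{N+2s} \cdot \e^{-N} \cdots} \, dy\, dx,
\end{equation*}
so the correct normalization constant pulls out a power of $\e$ and what remains converges, by continuity of $A_i$ and dominated convergence, to $\int \cB_{A_i(x_0,x_0)}(g)\,dx$ (the frozen-coefficient kernel). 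This is the content of the displayed ``Here'' formula in the statement.

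Next I would estimate the error and transport terms. For the transport term $\tfrac12 \int \cB_{K_i}(f_\e, h_i)\,dx$: since $h_i$ is Lipschitz and bounded, $\delta(h_i, x, y)$ contributes an extra factor $|x-y|$ (locally) and after rescaling this produces a net positive power of $\e$, so it vanishes in the limit relative to the leading term — the same bookkeeping already used in the proof of Proposition~\ref{propexistence} and Lemma~\ref{lemaLp}. For the error terms $\mathcal E_i$, I would use the representation from Lemma~\ref{errorpositivo}:
\begin{equation*}
\mathcal E_i = \iint_{\R^{2N}} \sqrt{f_\e}(x)\sqrt{f_\e}(y)\, \delta^2(h_i,x,y) K_i(x,y)\,dy\,dx - \inf_{w} \iint_{\R^{2N}} \sqrt{f_\e}(x)\sqrt{f_\e}(y)\, Q(h_i,w,x,y) K_i(x,y)\,dx\,dy.
\end{equation*}
The first term again carries a $\delta^2(h_i,x,y) \lesssim |x-y|^2$ factor and hence a positive power of $\e$ after rescaling, so it is negligible. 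For the infimum term, the key point is that $Q(h,w,x,y) \geq 0$ and $Q(h,0,x,y) = \delta^2(h,x,y)$, so the infimum over $w$ is squeezed: it is nonnegative, and bounded above by the $w = 0$ value, which we just argued is $o(1)$ at the relevant scale. Hence $\mathcal E_i = o(1)$ in the appropriate normalization and both errors disappear in the limit.

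Putting these pieces together, after dividing by the $\e$-power extracted above and letting $\e \to 0$, the identity $I_1(\mu_\e) = I_2(\mu_\e)$ collapses to
\begin{equation*}
\int_{\R^N} \cB_{A_1(x_0,x_0)}(\sqrt f)\,dx = \int_{\R^N} \cB_{A_2(x_0,x_0)}(\sqrt f)\,dx
\end{equation*}
for the arbitrary fixed density $f$, which is exactly the claim. The main obstacle I anticipate is making the interchange of the infimum (over $v$, or $w$) with the limit $\e \to 0$ rigorous: the error $\mathcal E_i$ is defined through an infimum, so one only gets the upper bound $\mathcal E_i \le (\text{value at } w=0)$ for free, and for the lower bound one needs $\mathcal E_i \ge 0$, which indeed follows from $Q \ge 0$ — but one must check that these two one-sided bounds, together with the rescaling, genuinely pinch $\mathcal E_i$ to the right order uniformly, and that no uniformity in $w$ is secretly required. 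A secondary technical point is verifying that $\sqrt{f_\e}$ stays in the regularity class $C^{2s+\alpha}$ needed to apply Lemmas~\ref{lemaerrornolocal}--\ref{errorpositivo} (it does, with constants depending on $\e$, but only the qualitative applicability matters here since the quantitative estimates are done by hand after rescaling), and that the dominated-convergence passage in the leading diffusion term is justified using the two-sided ellipticity bound $c_1|\xi|^2 \le \xi^t A_i(x,y)\xi \le C_1|\xi|^2$ to dominate the integrand uniformly in $\e$ by the fractional Laplacian kernel of $\sqrt f$.
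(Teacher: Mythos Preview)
Your proposal is correct and follows essentially the same approach as the paper: rescale the density $f_\e(x)=\e^{-N}f((x-x_0)/\e)$, identify the leading $\e^{-2s}$ diffusion term via dominated convergence with the frozen kernel $A_i(x_0,x_0)$, and show that the transport and error contributions are of lower order. Your treatment of $\mathcal E_i$ via the sandwich $0\le \inf_w \leq (\text{value at } w=0)=\int\sqrt{f_\e}\sqrt{f_\e}\,\delta^2(h_i)K_i$ is exactly the mechanism the paper uses (the paper phrases it with a generic fixed $w$, but $w=0$ is the natural choice and makes the cancellation with the first term in Lemma~\ref{errorpositivo} transparent); the only cosmetic difference is that the paper handles the transport term by integrating by parts to $\int f(x)\,L_{K_i}h_i(\lambda x)\,dx=O(1)$ rather than estimating $\delta(h_i)$ directly.
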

\begin{proof}
Let $f$ be a probability density in $\R^N$ with compact support so that $\sqrt f$ is a $C^{2s+\alpha}$ for some $\alpha>0$. We will prove the result first in the case $x_0=0$. 

For $\lambda>0$ define $f_\lambda(x)=\lambda^{-N}f(x/\lambda)$, which is also a probability density. We need to analyze the behaviour of $I_i(f_\lambda)$ as $\lambda\to0$. For this we use Lemma \ref{errorpositivo} and analyze each term separately. We have
\begin{align*}
\int_{\R^N}\cB_{K_i}(\sqrt{f_\lambda})(x)dx&=\frac{c_{N,s}}{2}\lambda^{-N}\int_{\R^{2N}}\frac{(\sqrt{f}(x/\lambda) -\sqrt{f}(y/\lambda))^2}{|(x-y)^t A_i(x,y)(x-y)|^{N+2s}}dydx\\
&=\frac{c_{N,s}}{2}\lambda^{N}\int_{\R^{2N}}\frac{(\sqrt{f}(x) -\sqrt{f}(y))^2}{|(\lambda x-\lambda y)^t A_i(\lambda x,\lambda y)(\lambda x-\lambda y)|^{N+2s}}dydx\\
&=\frac{c_{N,s}}{2}\lambda^{-2s}\int_{\R^{2N}}\frac{(\sqrt{f}(x) -\sqrt{f}(y))^2}{|(x-y)^t A_i(\lambda x,\lambda y)(x-y)|^{N+2s}}dydx
\end{align*}
%{\red If the kernel is of the form $a(x)K(x-y)$ we obtain a similar result
%\[
%\int_{\R^N}\cB(\sqrt{f_\lambda})(x)dx=\frac{c_{N,s}}{2}\lambda^{-2s}\int_{\R^{2N}}(a(\lambda x)\sqrt{f}(x) -a(\lambda y)\sqrt{f}(y))^2K(x-y)dydx
%\]
%}
On the other hand we have
\begin{align*}
\int_{\R^N}\cB_{K_i}(f_\lambda,h)dx&=\int_{\R^{N}}f_\lambda(x) L_{K_i}h_idx\\
&=\lambda^{-N}\int_{\R^{N}}f(x/\lambda)L_{K_i}h_i(x)dx\\
&=\int_{\R^{N}}f(x)  L_{K_i}h_i(\lambda x)dx,
\end{align*}
%{\red If the kernel is of the form $a(x)K(x-y)$ we obtain a similar result
%\[
%\int_{\R^N}\cB(f_\lambda,h)dx=\int_{\R^{N}}a(\lambda x)f(x)  L_K(h)(\lambda x)dx
%\]
%}
The third term can be rewritten as 
\begin{align*}
&\hspace{-40pt}\int_{\R^{2N}}\sqrt{f_\lambda}(x)\sqrt{f_\lambda}(y)\delta^2(h,x,y)K_i(x,y)dydx\\
&\hspace{80pt}=\lambda^{-N}\int_{\R^{2N}}\frac{\sqrt{f}(x/\lambda)\sqrt{f}(y/\lambda)\delta^2(h,x,y)}{|(x-y)^t A_i( x, y)(x-y)|^{N+2s}}dydx\\
&\hspace{80pt}=\lambda^N\int_{\R^{2N}}\frac{\sqrt{f}(x)\sqrt{f}(y)\delta^2(h,\lambda x,\lambda y)}{|(\lambda x-\lambda y)^t A_i(\lambda x,\lambda y)(\lambda x-\lambda y)|^{N+2s}}\\
&\hspace{80pt}=\lambda^{-2s}\int_{\R^{2N}}\frac{\sqrt{f}(x)\sqrt{f}(y)\delta^2(h,\lambda x,\lambda y)}{|(x-y)^t A_i(\lambda x,\lambda y)(x-y)|^{N+2s}}dydx
\end{align*}
%{\red If the kernel is of the form $a(x)K(x-y)$ we obtain a similar result
%\[
%\int_{\R^{2N}}\sqrt{f_\lambda}(x)\sqrt{f_\lambda}(y)\delta^2(h,x,y)dydx=\lambda^{-2s}\int_{\R^{2N}}\sqrt{a}(\lambda x)\sqrt{a}(\lambda y)\sqrt{f}(x)\sqrt{f}(y)\delta^2(h,\lambda x,\lambda y)K(x-y)dydx
%\]
%}
Since $h$ is smooth we have that 
\[
\delta^2(h,\lambda x,\lambda y)\leq C\lambda^2|x-y|^2,
\] 
and since $\sqrt f$ has compact support, we conclude
\[
\int_{\R^{2N}}\sqrt{f_\lambda}(x)\sqrt{f_\lambda}(y)\delta^2(h,x,y)K_i(x,y)dydx\leq C\lambda^{2-2s}
\]

Finally we need to analyze the error term, note first that 
\[
\inf\limits_{w\in C^{\R^N}}c_{N,s}\int_{\R^{2N}}\sqrt{f}(x)\sqrt{f}(y)Q(h,w,x,y)K(x,y)dxdy\geq 0,
\]  
Now let $w$ be any smooth fixed function and notice that 
\begin{align*}
&\hspace{-40pt}\int_{\R^{2N}}\sqrt{f_\lambda}(x)\sqrt{f_\lambda}(y)Q(h,w,x,y)K(x,y)dxdy\\
&\hspace{80pt}=\lambda^{-N}\int_{\R^{2N}}\frac{\sqrt{f}(x/\lambda)\sqrt{f}(y\lambda)Q(h,w,x,y)}{|(x-y)^t A_i( x, y)(x-y)|^{N+2s}}dydx\\
&\hspace{80pt}=\lambda^{N}\int_{\R^{2N}}\frac{\sqrt{f}(x)\sqrt{f}(y)Q(h,w,\lambda x,\lambda y)}{|(\lambda x-\lambda y)^t A_i(\lambda x,\lambda y)(\lambda x-\lambda y)|^{N+2s}}dydx\\
&\hspace{80pt}=\lambda^{-2s}\int_{\R^{2N}}\frac{\sqrt{f}(x)\sqrt{f}(y)Q(h,w,\lambda x,\lambda y)}{|(x-y)^t A_i(\lambda x,\lambda y)(x-y)|^{N+2s}}dxdy.
\end{align*}
%{\red If the kernel is of the form $a(x)K(x-y)$ we obtain a similar result
%\[
%\int_{\R^{2N}}\sqrt{f_\lambda}(x)\sqrt{f_\lambda}(y)Q(h,w,x,y)dxdy=\lambda^{-2s}\int_{\R^{2N}}\sqrt{a}(\lambda x)\sqrt{a}(\lambda y)\sqrt{f}(x)\sqrt{f}(y)Q(h,w,\lambda x,\lambda y)K(x-y)dxdy
%\]
%}
As before, observe that $Q(h,w,\lambda x,\lambda y)\leq C\lambda^2|x-y|^2$ and since $\sqrt f$ has compact support, we deduce that $\mathcal E= o(\lambda^{2-2s})$.

Taking all the above inequalities into account we deduce that
\[
\lim\limits_{\lambda\to 0}\lambda^{2s}I_i=\int_{\R^{2N}}\frac{\sqrt f (x)-\sqrt f (y))^2}{|(x-y)^t A_i(0,0)(x-y)|^{N+2s}}dydx.
\]
To deduce the same inequality at any $x_0$ we just consider 
\[
f_\lambda(x)=\lambda^{-N}f\left(\frac{x-x_0}{\lambda}\right)
\] and apply the previous argument.
\end{proof}

The previous lemma states that, under the hypothesis of Theorem \ref{mainteo}, the energy functional associated to the pure diffusion must coincide on the set of functions $\{g\in C^2(\R^N),\ g=\sqrt f\}$, where $f$ is a smooth probability density with compact support. 

The next lemma recovers pointwise information on the kernels, based on the previous energy equality. The proof uses the fact that the energy can be rewritten using the Fourier transform.

%%%%%%%%%%%%%%%%%%%%%%%%%%%%%%%%%%%%%%%%%%%%%%%%%%%%%%%%%%%%%%%%%%%%%%%%%%%%%%%%%5

%\section{The Nonlocal DK Theorem}

%\begin{theorem}
%Let 
%\[
%\mathcal L_{K_i}u=L_{K_1}u+\cB(u,h_i), \quad i=1,2
%\]
%and denote by $\lambda_i(\Omega)$ the first eigenvalue associated to $\mathcal L_{K_i}$. Then, if $\lambda_1(\Omega)=\lambda_2(\Omega)$ for all smooth open sets $\Omega\subset\R^N$ then 
%\begin{itemize}
%\item[i.-] $K_1=K_2$
%\item[ii.-] There exists a constant $C$ such that $h_1(x)=h_2(x)+C$.
%\end{itemize}
%\end{theorem}
%

\begin{lemma}\label{Fourier}
Let $A_i\in \mathbb S^{N}$, with $i=1,2$, be constant positive definite matrices and denote 
\[
K_i(x,y) = \frac{1}{|(x-y)^t A_i (x-y)|^{(N + 2s)/2}}.
\] 
If
$$
\int \cB_{K_1}(\sqrt{f})dx = \int \cB_{K_2}(\sqrt{f})dx
$$
for all nonnegative functions $f \in C_0^\infty$ such that $\sqrt{f} \in C_0^{2s\alpha}$ for some $\alpha>0$, then $A_1 = A_2$. 
\end{lemma}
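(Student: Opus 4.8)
The plan is to pass to the Fourier side: the anisotropic Gagliardo energy $g\mapsto\int\cB_{K_i}(g)\,dx$ is a quadratic form whose Fourier multiplier can be computed explicitly in terms of $A_i$, the hypothesis forces the two multipliers to coincide, and then $A_1=A_2$ follows by elementary algebra.

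Write $g=\sqrt f$, so that every nonnegative $g\in C_c^\infty(\R^N)$ is admissible, and put $\Psi_i(g):=\int_{\R^N}\cB_{K_i}(g)\,dx=\tfrac12\iint_{\R^{2N}}(g(x)-g(y))^2K_i(x,y)\,dx\,dy$; the hypothesis is $\Psi_1(g)=\Psi_2(g)$ for all such $g$. Since $A_i$ is constant, $K_i(x,y)=k_i(x-y)$ with $k_i(z)=|z^tA_iz|^{-(N+2s)/2}$; using the translation $z=x-y$, Fubini (legitimate because $\int_{\R^N}(g(y+z)-g(y))^2\,dy\lesssim\min(|z|^2,1)$ and $\int_{\R^N}k_i(z)\min(|z|^2,1)\,dz<\infty$ as $2s<2$) and Plancherel, we obtain
$$\Psi_i(g)=\int_{\R^N}|\widehat g(\xi)|^2\,m_i(\xi)\,d\xi,\qquad m_i(\xi)=\int_{\R^N}\bigl(1-\cos(2\pi\,\xi\cdot z)\bigr)k_i(z)\,dz,$$
with a suitable normalization of $\widehat{\ \cdot\ }$. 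The substitution $z=A_i^{-1/2}w$, with $A_i^{1/2}$ the symmetric positive square root, gives $m_i(\xi)=(\det A_i)^{-1/2}\int_{\R^N}\bigl(1-\cos(2\pi(A_i^{-1/2}\xi)\cdot w)\bigr)|w|^{-(N+2s)}\,dw$, and the last integral, being rotation invariant and homogeneous of degree $2s$ in its argument, equals $\kappa_{N,s}\,|A_i^{-1/2}\xi|^{2s}=\kappa_{N,s}\,(\xi^tA_i^{-1}\xi)^s$ for a fixed $\kappa_{N,s}\in(0,\infty)$ (finiteness uses $0<s<1$). Hence $m_i(\xi)=\kappa_{N,s}(\det A_i)^{-1/2}(\xi^tA_i^{-1}\xi)^s$.

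Next I would extract $m_1\equiv m_2$ by concentrating the spectral mass of admissible functions. Fix $\xi_0\neq0$ and a nonnegative $\phi\in C_c^\infty(\R^N)$ with $\phi\not\equiv0$, and set $g_\lambda(x)=\phi(\lambda x)\bigl(1+\cos(2\pi\,\xi_0\cdot x)\bigr)\geq0$. Then $\lambda^N|\widehat{g_\lambda}|^2$ concentrates on $\{0,\pm\xi_0\}$ as $\lambda\to0^+$; since $D:=m_1-m_2$ is even, continuous and of polynomial growth (so the cross terms between the bumps and the tails are negligible, because $\widehat\phi$ is Schwartz, and the contribution of the bump at the origin is $O(\lambda^{2s})$ because $D$ is $2s$-homogeneous with $D(0)=0$), one gets
$$0=\lambda^N\bigl(\Psi_1(g_\lambda)-\Psi_2(g_\lambda)\bigr)\longrightarrow \tfrac12\|\widehat\phi\|_{L^2}^2\,D(\xi_0)\qquad(\lambda\to0^+),$$
forcing $D(\xi_0)=0$; as $\xi_0$ is arbitrary, $m_1\equiv m_2$. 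Equivalently $(\det A_1)^{-1/2}(\xi^tA_1^{-1}\xi)^s=(\det A_2)^{-1/2}(\xi^tA_2^{-1}\xi)^s$ for all $\xi$; raising to the power $1/s$ shows that the symmetric matrices $(\det A_1)^{-1/(2s)}A_1^{-1}$ and $(\det A_2)^{-1/(2s)}A_2^{-1}$ have equal quadratic forms, hence $A_1^{-1}=\rho\,A_2^{-1}$ with $\rho=(\det A_1/\det A_2)^{1/(2s)}>0$. Taking determinants gives $\det A_2/\det A_1=\rho^{N}$, while by definition $\det A_2/\det A_1=\rho^{-2s}$, so $\rho^{N+2s}=1$, i.e. $\rho=1$ and therefore $A_1=A_2$.

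I expect the main obstacle to be the passage from the integral identity $\int|\widehat g|^2D\,d\xi=0$ to the pointwise identity $D\equiv0$ — that is, making precise in what sense the cone $\{|\widehat g|^2:g\in C_c^\infty,\ g\geq0\}$ is rich enough — together with the attendant book-keeping: the Fubini/Plancherel representation of $\Psi_i$, the finiteness and homogeneity of $\kappa_{N,s}$, and the estimates controlling the concentrating family $g_\lambda$.
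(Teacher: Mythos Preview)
Your proof is correct and shares its backbone with the paper's argument: both pass to the Fourier side, compute the multiplier $m_i(\xi)=c\,(\det A_i)^{-1/2}(\xi^tA_i^{-1}\xi)^s$, and finish with the determinant trick yielding $\rho^{N+2s}=1$.

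The genuine difference is in how the pointwise identity $m_1\equiv m_2$ is extracted from the integral hypothesis. You plant spectral mass at an arbitrary frequency $\xi_0$ via the modulated bump $g_\lambda(x)=\phi(\lambda x)(1+\cos(2\pi\,\xi_0\cdot x))$ and read off $m_1(\xi_0)=m_2(\xi_0)$ directly in the limit; raising to the power $1/s$ then gives equality of the full quadratic forms in one stroke. The paper instead never establishes $m_1\equiv m_2$ as such: it takes tensorized test functions $g(x)=g_1(x_1/\lambda)g_2(x')$, lets $\lambda\to0$ to isolate the single entry $(A_i^{-1})_{11}$, then iterates with coordinate permutations $E_k$ to recover all diagonal entries of $A_i^{-1}$, and finally uses $\pi/4$ rotations $E(k,m)$ to pick out the off-diagonal entries $(A_i^{-1})_{km}$. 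Your route is shorter and conceptually cleaner (one limit, one algebraic step), at the cost of the cross-term estimate for the separating bumps, which you correctly flag but which is routine given that $\hat\phi$ is Schwartz. The paper's route avoids that analytic bookkeeping but pays for it with a longer matrix-entry chase; it also has the mild drawback that its final determinant computation, as written, yields $\rho^{N-2s}=1$ rather than your $\rho^{N+2s}=1$, which is harmless for $N\geq2$ but would be inconclusive at $N=1$, $s=1/2$ --- your version sidesteps that edge case.
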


\begin{proof}
Denote $g = \sqrt f$, where $f$ is a smooth probability density with compact support. By hypothesis we also have that $g$ is smooth. For the rest of the proof let $A$ be symmetric positive definite matrix, write $A = PDP^{t}$ and denote $B = P \sqrt{D}$. Observe also that $A = BB^t$. Denote by $K(x,y)=|(x-y)^t A (x-y)|^{-(N + 2s)/2}$ We have
\begin{align*}
\int \cB_K(g,g) dx = \mathrm{Det}(A)^{-1} [g_{B}]_{H^s}^2,
\end{align*} 
where $g_{B}(x) = g(B^{-t} x)$, where $B^{-t}  = (B^{-1})^t$. Using Proposition 4.2 in~\cite{Hitchhikers}, we get that
\begin{align*}
[g_{B}]_{H^s}^2 = \int |\xi|^{2s} |\hat g_{B}(\xi)|^2 d\xi,
\end{align*}
where $\hat g$ denotes the Fourier transform of $g$. Therefore 
\begin{align*}
\int \cB_K(g,g) dx & = \mathrm{Det}(A)^{-1} \int |\xi|^{2s} \Big{|} \int e^{i \langle x, \xi\rangle} g_{B}(x)dx \Big{|}^2 d\xi \\
& = \mathrm{Det}(A)^{-1}\int |\xi|^{2s} |\int e^{i \langle x, \xi\rangle} g(B^{-t} x)dx|^2 d\xi \\
& = \mathrm{Det}(A)^{-1} |\mathrm{Det}(B)|^2 \int |\xi|^{2s} |\int e^{i \langle B^t y, \xi\rangle} g(y)dy|^2 d\xi \\
& =  \int |\xi|^{2s} |\int e^{i \langle   y,  B \xi\rangle} g(y)dy|^2 d\xi.
\end{align*} 

Make the change $\tilde \xi = B \xi$ to conclude
\begin{align*}
\int \cB_K(g,g) dx  & = |\mathrm{Det}(B^{-1})| \int |B^{-1}\xi|^{2s} |\int e^{i \langle y, \xi\rangle} g(y)dy|^2 d\xi \\
& = |\mathrm{Det}(A)|^{-1/2} \int |B^{-1}\xi|^{2s} \hat g^2(\xi) d\xi.
\end{align*}

Since $|B^{-1} \xi|^2 = \langle B^{-1} \xi, B^{-1} \xi \rangle = \langle A^{-1} \xi, \xi \rangle$ we conclude that
\begin{align}\label{faith}
\int \cB_A(g,g) dx  & = |\mathrm{Det}(A)|^{-1/2} \int \langle A^{-1} \xi, \xi \rangle^{s} \hat g^2(\xi) d\xi.
\end{align}

Now, consider a function $g$ with $g(x) = g_1(x_1/\lambda) g_2(x')$ where $x = (x_1, x')$. Then, writing $A^{-1} = (A^{ij})_{ij}$ and 
\begin{align*}
A^{-1} = \left [  \begin{array}{cc} A^{11} & v^t \\ v & (A^{-1})' \end{array}\right ], %\qquad
%\tilde A = \left [  \begin{array}{cc} \tilde a & \tilde b \\ \tilde b & \tilde d \end{array}\right ]
\end{align*}
we get
\begin{align*}
& \int \langle A^{-1} \xi, \xi \rangle^{s} \hat g^2(\xi) d\xi \\
= & \lambda^2 \int \langle A^{-1} \xi, \xi \rangle^{s} \hat g_1^2(\lambda \xi_1) \hat g^2(\xi') d\xi \\
= & \lambda^2 \int \Big{(} A^{11} \xi_1^2 + 2 \xi_1 \langle v, \xi'  \rangle + \xi' (A^{-1})' \xi' \Big{)}^s \hat g_1^2(\lambda \xi_1) \hat g_2^2(\xi') d\xi \\
= & \lambda^{1-2s} \int \Big{(} A^{11} \xi_1^2 + 2 \lambda \xi_1 \langle v, \xi'  \rangle + \lambda^2 \xi' (A^{-1})' \xi' \Big{)}^s \hat g_1^2(\xi_1) \hat g_2^2(\xi') d\xi
%= & \lambda^{1 + 2s} \int \Big{(} A^{11} \xi_1^2 + 2 \lambda^{-1}\xi_1 \langle v, \xi'  \rangle + \lambda^{-2}\xi' (A^{-1})' \xi' \Big{)}^s \hat g_1^2(\xi_1) \hat g_2^2(\xi') d\xi.
\end{align*}

Then, after a normalization in $\lambda$ and taking $\lambda \to 0$, we conclude that
\begin{align*}
\lim_{\lambda \to 0} \lambda^{2s - 1} \int \cB_A(g,g) dx  & = 
|\mathrm{Det}(A)|^{-1/2} \int \Big{(} A^{11} \xi_1^2 \Big{)}^s \hat g_1^2(\xi_1) \hat g_2^2(\xi') d\xi \\
& = C |\mathrm{Det}(A)|^{-1/2} (A^{11})^s
,
\end{align*}
with 
$$
C = [g_1]_{H^s(\R)} |g_2|_{L^2(\R^{N - 1})}.
$$ 

Now let $A_i$ and $K_i$ as in the statement of the lemma. By hypothesis we have
\begin{equation*}
\int \cB_{K_1}(g,g) dx = \int \cB_{K_2}(g,g) dx,
\end{equation*}
and therefore we conclude
\begin{align*}
|\mathrm{Det}(A_1)|^{-1/2} (A_1^{11})^s = |\mathrm{Det}( A_2)|^{-1/2} (A_2^{11})^s.
\end{align*}

Notice that if $E$ is a change of rows matrix and if $A$ is a positive definite matrix, then $\mathrm{Det}(E A E) = \mathrm{Det}(A)$ since $\mathrm{Det}(E) = -1$. For $k = 2,...,N$, we consider $E_k$ the matrix exchanging the first and k-th rows of the identity matrix. Let $A_k = E_k A E_k$ we have $A_k^{-1} = E_k A^{-1} E_k$ and therefore $(A_k^{-1})_{11} = A^{kk}$. We may apply the procedure done after \eqref{faith} to $A_i$, (replacing $g(x)$ by $g(E_k x)$) to conclude that
\begin{align*}
|\mathrm{Det}(A_1)|^{-1/2} (A_1^{kk})^s = |\mathrm{Det}(A_2)|^{-1/2} ({A_2}^{kk})^s, \quad \mbox{for all} \ k = 1,...,N.
\end{align*}

Denote 
\begin{align}\label{faith2}
\rho =  \left( \frac{|\mathrm{Det}(A_1)|}{|\mathrm{Det}(A_2)|} \right)^{\frac{1}{2s}},
\end{align}
to conclude that
\begin{align}\label{faith1}
\frac{A_1^{ii}}{A_2^{ii}} = \rho, \quad \mbox{for all} \ i.
\end{align}

\medskip

Now, for $k, m \in \{ 1,...,N \}$ with $k < m$, let $E = E(k,m)$ be the rotation matrix in $\pi/4$ over the plane $(k,m)$, that is
\begin{equation*}
E_{ij} = \left \{ \begin{array}{cl} \cos(\pi/4) \quad & \mbox{if} \ i=j=k, \ \mbox{or} \ i=j=m \\ 
-\sin(\pi/4) \quad & \mbox{if} \ i=k, \ j=m \\
\sin(\pi/4) \quad & \mbox{if} \ i=m, \ j = k \\
\delta_{ij} \quad & \mbox{in other case} \end{array} \right .
\end{equation*}

Let $\hat A = E A E^t$ and recall that $E E^t = I$. We also have that $\mathrm{Det}(\hat A) = \mathrm{Det}(A)$ and in addition we have the identity $\hat A^{-1} = E A^{-1} E^t$. Therefore
\begin{align*}
\hat A^{kk} %& = \sum_{\alpha} E_{k \alpha} (A^{-1} E^t)_{\alpha k} \\
%& = \sum_{\alpha} E_{k \alpha} (\sum_{\beta} A^{-1}_{\alpha \beta} E^t_{\beta k}) \\
& = \sum_{\alpha, \beta} A^{\alpha \beta} E_{k \beta} E_{k\alpha} \\
& = \sum_{\alpha} E_{k\alpha}\Big{(} A^{\alpha k } E_{kk} + A^{\alpha m } E_{km}\Big{)} \\
& = E_{kk}\Big{(} A^{k k } E_{kk} + A^{k m } E_{km}\Big{)}
+ E_{km}\Big{(} A^{m k } E_{kk} + A^{m m } E_{km}\Big{)} \\
& = \frac{1}{2} \Big{(} A^{kk} - 2 A^{km} + A^{mm} \Big{)}.
\end{align*}

We may apply this fact combined with  a similar argument as before (replacing $g$  by $g_E(x) := g(Ex)$) to $A_i$ to conclude that
\begin{equation*}
\frac{A_1^{kk} - 2 A_1^{km} + A_1^{mm}}{A_2^{kk} - 2 A_2^{km} + A_2^{mm}}= \frac{\hat A_1^{kk}}{\hat A_2^{kk}} = \rho, \quad \mbox{for all} \ k.
\end{equation*}
Since by \eqref{faith1} we have $A_1^{ii} = \rho A_2^{ii}$ for all $i$, we conclude that
$$
A_1^{ij} = \rho A_2^{ij}, \quad \mbox{for all} \ i, j.
$$

Thus, we conclude that $A_1 = \rho A_2$. This equality in conjunction with \eqref{faith2} let us arrive at
\begin{equation*}
\rho = (\rho^{N})^{\frac{1}{2s}},
\end{equation*}
from which $\rho = 1$ and therefore $A_1 = A_2$.
	\end{proof}

At this point we are able to deduce the first conclusion of Theorem \ref{mainteo}. We state it as the following lemma.
\begin{corollary}\label{extra}
Under the hypothesis of Theorem \ref{mainteo} we have
\[
A_1(x_0,x_0)=A_2(x_0,x_0).
\]
If furthermore the following decomposition holds
\begin{align}\label{sum}
A_i(x,y)=\tilde A_i(x)+\tilde A_i(y)
\end{align}
or\eqref{sproduct} then
\[
\tilde A_1(x)= \tilde A_2(x), \quad \forall x\in\R^N
\]
In particular 
\[
\int_{\R^N}\cB_{K_1}(\sqrt f)(x)dx=\int_{\R^N}\cB_{K_2}(\sqrt f)(x)dx
\]
\end{corollary}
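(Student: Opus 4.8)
The plan is to feed the output of Lemma~\ref{difusionigual} into Lemma~\ref{Fourier} and then read off the functions $\tilde A_i$ from the diagonal values $A_i(x,x)$.

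First I would fix an arbitrary point $x_0 \in \R^N$. Under the hypotheses of Theorem~\ref{mainteo}, Lemma~\ref{difusionigual} gives the equality of the frozen-coefficient energies
\[
\int_{\R^N} \cB_{A_1(x_0,x_0)}(\sqrt f)(x)\,dx = \int_{\R^N} \cB_{A_2(x_0,x_0)}(\sqrt f)(x)\,dx
\]
for every smooth probability density $f$ with compact support and $\sqrt f \in C^{2s+\alpha}(\R^N)$ for some $\alpha>0$. This is precisely the class of test functions required by Lemma~\ref{Fourier}, and $A_1(x_0,x_0)$, $A_2(x_0,x_0)$ are constant symmetric positive definite matrices, so that lemma applies verbatim and yields $A_1(x_0,x_0) = A_2(x_0,x_0)$. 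Since $x_0$ is arbitrary, $A_1(x,x) = A_2(x,x)$ for all $x \in \R^N$, which is the first assertion.

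Next I would specialize to the two decompositions. If $A_i(x,y) = \tilde A_i(x) + \tilde A_i(y)$, then $A_i(x,x) = 2\tilde A_i(x)$, so the diagonal identity just obtained forces $\tilde A_1 \equiv \tilde A_2$. If instead $A_i(x,y) = \tilde A_i(x)\tilde A_i(y) + \tilde A_i(y)\tilde A_i(x)$, then $A_i(x,x) = 2\tilde A_i(x)^2$, hence $\tilde A_1(x)^2 = \tilde A_2(x)^2$ for every $x$; invoking positive definiteness of the $\tilde A_i(x)$ and uniqueness of the positive definite square root, this again gives $\tilde A_1(x) = \tilde A_2(x)$. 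In either case the two-variable coefficients coincide, $A_1(x,y) = A_2(x,y)$ for all $x,y$, whence $K_1 \equiv K_2$, so $\cB_{K_1} = \cB_{K_2}$ as bilinear forms and the last displayed identity of the corollary is immediate.

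I expect the only step requiring genuine care to be the matrix square-root argument in the product case: the implication $\tilde A_1(x)^2 = \tilde A_2(x)^2 \Rightarrow \tilde A_1(x) = \tilde A_2(x)$ is false for arbitrary symmetric matrices, so one really needs the $\tilde A_i(x)$ to be positive (semi)definite to use uniqueness of the square root. Everything else is a direct concatenation of Lemmas~\ref{difusionigual} and~\ref{Fourier} together with elementary evaluation on the diagonal $y=x$.
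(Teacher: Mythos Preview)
Your proposal is correct and follows essentially the same route as the paper: combine Lemma~\ref{difusionigual} with Lemma~\ref{Fourier} to obtain $A_1(x_0,x_0)=A_2(x_0,x_0)$, then evaluate the decompositions~\eqref{sum} and~\eqref{sproduct} on the diagonal and invoke uniqueness of the positive definite square root. The paper's proof is terser but identical in structure, and your explicit flag that positive definiteness of $\tilde A_i(x)$ is what makes the square-root step go through is exactly the point the paper alludes to.
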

\begin{proof}
Thanks to Lemma \ref{difusionigual} and Lemma \ref{Fourier} it is easy to conclude $A_1(x_0,x_0)=A_2(x_0,x_0)$. From here and the extra hypothesis \eqref{product} or \eqref{sum} the conclusion of the corollary follows by noting that since $\tilde A$ is a symmetric positive definite matrix its square root is well defined.
\end{proof}
At this point we have concluded the first part of Theorem \ref{mainteo}. The second part of Theorem \ref{mainteo} is proven in the next lemma.
\begin{lemma}
Under the hypothesis of Theorem \ref{mainteo} and Corollary \ref{extra} we have that
\[
L_Kh_1=L_Kh_2\quad \text{in }\R^N, 
\]
where $K$ is the kernel in Lemma \ref{difusionigual}. As a consequence, we deduce $h_1=h_2+C$ for some constant $C$.
\end{lemma}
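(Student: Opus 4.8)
The plan is to combine the identity of Lemma~\ref{lemaerrornolocal} with a concentration argument: once the diffusion coefficients are known to agree, the diffusion part $\int_{\R^N}\cB(\sqrt f)\,dx$ cancels in the difference $I_1-I_2$, and the surviving terms, evaluated along measures concentrating at a point $x_0$, isolate the value $L_K h_i(x_0)$. First, by Corollary~\ref{extra} and the decomposition~\eqref{sproduct} (or \eqref{sum}) we have $\tilde A_1=\tilde A_2$, hence $A_1(x,y)=A_2(x,y)$ for all $x,y$, so $K_1=K_2=:K$ and $\cB_{K_1}=\cB_{K_2}=:\cB$. Applying Lemma~\ref{lemaerrornolocal} to each operator $\mathcal L_i$ and using $I_1(\mu)=I_2(\mu)$ for every compactly supported $\mu$, the common term $\int_{\R^N}\cB(\sqrt f)\,dx$ drops out and we obtain, for every smooth probability density $f$ with compact support and $\sqrt f\in C^{2s+\alpha}(\R^N)$,
\begin{equation*}
\frac12\int_{\R^N}\big(\cB(f,h_1)-\cB(f,h_2)\big)\,dx=\mathcal E_2-\mathcal E_1.
\end{equation*}
By the integration-by-parts formula~\eqref{porpartes} and the self-adjointness of $L_K$ one has $\int_{\R^N}\cB(f,h_i)\,dx=-\int_{\R^N}f\,L_K h_i\,dx$, so the left-hand side above equals $\frac12\int_{\R^N}f\,(L_K h_2-L_K h_1)\,dx$.

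Next I would localize. Fix $x_0\in\R^N$ and put $f_\lambda(x)=\lambda^{-N}f((x-x_0)/\lambda)$, which is again an admissible density. Since $h_i$ is smooth and bounded, $L_K h_i$ is bounded and continuous, so dominated convergence gives $\int_{\R^N}f_\lambda\,L_K h_i\,dx=\int_{\R^N}f(y)\,L_K h_i(x_0+\lambda y)\,dy\longrightarrow L_K h_i(x_0)$ as $\lambda\to0$. On the other hand, running the scaling identities from the proof of Lemma~\ref{difusionigual} on $\mathcal E_i(f_\lambda)$, and using $\delta^2(h_i,\lambda x,\lambda y)\le C\lambda^2|x-y|^2$, $Q(h_i,w,\lambda x,\lambda y)\le C\lambda^2|x-y|^2$ together with the compact support of $\sqrt f$, one gets $\mathcal E_i(f_\lambda)=O(\lambda^{2-2s})$, hence $\mathcal E_i(f_\lambda)\to0$ because $s<1$. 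Passing to the limit $\lambda\to0$ in the identity of the previous paragraph therefore yields $L_K h_1(x_0)=L_K h_2(x_0)$; as $x_0$ was arbitrary, $L_K h_1=L_K h_2$ in $\R^N$.

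Finally, set $v=h_1-h_2$: it is smooth, bounded, and $L_K v=0$ in all of $\R^N$. A Liouville-type argument forces $v$ to be constant. Indeed, for $R>0$ the function $v_R(x)=v(Rx)$ solves $L_{K^{(R)}}v_R=0$ with a kernel $K^{(R)}(x,y)=R^{N+2s}K(Rx,Ry)$ still satisfying~\eqref{eliptic} with the \emph{same} constants $\gamma,\Gamma$, while $\|v_R\|_{L^\infty(\R^N)}=\|v\|_{L^\infty(\R^N)}$; the available interior H\"older estimates for such operators then give $[v]_{C^\beta(B_R)}\le C R^{-\beta}\|v\|_{L^\infty(\R^N)}$ with $\beta>0$ and $C$ independent of $R$, and letting $R\to\infty$ shows $v$ is constant. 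Hence $h_1=h_2+C$ for some $C\in\R$, which completes the proof of the second part of Theorem~\ref{mainteo}. I expect the only delicate point to be the bookkeeping behind $\mathcal E_i(f_\lambda)=O(\lambda^{2-2s})$ — i.e. that the error does not survive at order one, so that no $\lambda^{2s}$-rescaling is needed here, unlike in Lemma~\ref{difusionigual} — together with the Liouville step, where the boundedness of $h_1,h_2$ (a consequence of the standing hypotheses) is essential.
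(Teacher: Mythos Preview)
Your argument is correct and follows essentially the same route as the paper: cancel the common diffusion term via Corollary~\ref{extra}, concentrate with $f_\lambda(x)=\lambda^{-N}f((x-x_0)/\lambda)$, use the scaling bounds from Lemma~\ref{difusionigual} to kill the error terms $\mathcal E_i(f_\lambda)=O(\lambda^{2-2s})\to 0$, and conclude $L_Kh_1=L_Kh_2$ pointwise before invoking Liouville. The only cosmetic difference is that the paper's Liouville step (Lemma~\ref{Liouville}) is phrased via Harnack rather than your rescaled interior H\"older estimate, but the two arguments are equivalent here.
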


\begin{proof}
We follow the steps of the proof of Lemma \ref{difusionigual}. Observe that thanks to Corollary \ref{extra} we have that 
\[
\int_{\R^N}\cB_{K_1}(\sqrt f)(x)dx=\int_{\R^N}\cB_{K_2}(\sqrt f)(x)dx
\]
and therefore, applying the same arguments as in the proof \ref{difusionigual} we have that
\[
\lim\limits_{\lambda\to 0}I_i=\int_{\R^N}f(x)L_{K_i}h_i(0)dx.
\]
With this and given that $I_1=I_2$ we conclude
\[
\int_{\R^N}f(x)L_{K_i}h_i(0)dx=\int_{\R^N}f(x)L_{K_i}h_i(0)dx.
\]
We can reproduce this argument at every $x\in\R^N$ to deduce 
\[
\int_{\R^N}f(x)L_{K_i}h_i(x)dx=\int_{\R^N}f(x)L_{K_i}h_i(x)dx.
\]
From here, we conclude 
\[
L_Kh_1(x)=L_Kh_2(x), \quad x\in\R^N.
\]
Since $h$ is bounded we can apply now Liouville's theorem, see Lemma \ref{Liouville} to deduce that there exists a constant $C$ such that $h_1=h_2+C$.

\end{proof}

\section{Appendix}

\begin{lemma}\label{lemarescale}
Assume that $\partial\Omega$ is of class $C^2$. There exists a constant $c_{\pm}, \delta > 0$ just depending on $N, s, \gamma, \Gamma$ and $\alpha$ such that
	\begin{equation*}
	c_- d^{\alpha - 2s} \leq L_K d^{\alpha} + \cB(h, d^\alpha) \leq c_+ d^{\alpha - 2s} \quad \mbox{in} \ \Omega_{\delta}.
	\end{equation*}
	
	If $\alpha > s$, then $c_- > 0$; and if $\alpha < s$, then $c_+ < 0$.
\end{lemma}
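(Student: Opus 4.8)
\emph{Proof plan.} The strategy is to reduce, near a boundary point, to an explicit half--space computation for the model kernel, and then absorb all remaining terms as lower--order perturbations by taking $\delta$ small. Since $\partial\Omega\in C^2$ there is $\delta_0>0$ with $d\in C^2(\overline{\Omega_{\delta_0}})$, $|\nabla d|\equiv1$ and $\|D^2 d\|_{L^\infty(\Omega_{\delta_0})}\le M$. Fix $\delta\le\delta_0$, $x_0\in\Omega_\delta$, and set $t:=d(x_0)$, $\nu:=\nabla d(x_0)$. Since $\cB$ is symmetric,
\[
L_K d^\alpha(x_0)+\cB(h,d^\alpha)(x_0)=\pv\int_{\R^N}\bigl(d^\alpha(y)-d^\alpha(x_0)\bigr)\Bigl(1+\tfrac12(h(y)-h(x_0))\Bigr)K(x_0,y)\,dy,
\]
and I work in the range $\alpha\in(0,2s)$, which is the one relevant for the barriers in Remark~\ref{rmkeigen}.

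\emph{Model term.} Let $\ell(y):=\bigl(\nu\cdot(y-x_0)+t\bigr)_+$, the distance to the hyperplane tangent to $\partial\Omega$ at the foot of the perpendicular from $x_0$, so that $\ell(x_0)=t$, $\nabla\ell^\alpha(x_0)=\nabla d^\alpha(x_0)$ and $d(y)=\ell(y)+O(M|y-x_0|^2)$ for $y$ near $x_0$; and let $\bar K(z):=|z^tA(x_0,x_0)z|^{-(N+2s)/2}$ be the kernel with coefficients frozen at $x_0$. Performing the linear change of variables taking $A(x_0,x_0)$ to the identity — exactly as in the proof of Lemma~\ref{Fourier} — turns $\bar K$ into a constant multiple of $|z|^{-N-2s}$ and $\ell$ into the linear distance function of a half--space; integrating out the $N-1$ tangential directions reduces matters to the one--dimensional identity $\Delta^s((x)_+^\alpha)=\kappa_1(\alpha)\,x^{\alpha-2s}$ for $x>0$, which is explicit (see~\cite{D}) and satisfies $\kappa_1(s)=0$, $\kappa_1<0$ on $(0,s)$, $\kappa_1>0$ on $(s,2s)$, with $|\kappa_1(\alpha)|$ bounded below on compact subsets of $(0,2s)\setminus\{s\}$. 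Exploiting the $\alpha$--homogeneity of $\ell^\alpha$ under dilations about that foot point gives
\[
\pv\int_{\R^N}\bigl(\ell^\alpha(y)-\ell^\alpha(x_0)\bigr)\bar K(x_0-y)\,dy=\kappa\,t^{\alpha-2s},
\]
where $\kappa=\kappa(N,s,\alpha,A(x_0,x_0))$ has the sign of $\alpha-s$ and $|\kappa|$ is bounded below uniformly over the ellipticity class~\eqref{eliptic}.

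\emph{Error.} It remains to control $\mathcal R(x_0):=L_K d^\alpha(x_0)+\cB(h,d^\alpha)(x_0)-\kappa t^{\alpha-2s}$, which I split at $|y-x_0|=t/2$. Writing the integrand as the model integrand plus a telescoping difference, the near part is a sum of three terms estimated respectively by $\int_{|z|<t/2}t^{\alpha-1}M|z|^2|z|^{-N-2s}dz$ (replacing $d^\alpha$ by $\ell^\alpha$, the linear parts cancelling), $\int_{|z|<t/2}t^{\alpha-1}\mathrm{Lip}(h)|z|^2|z|^{-N-2s}dz$ (the transport factor $\tfrac12(h(y)-h(x_0))$), and $\int_{|z|<t/2}t^{\alpha-1}\omega(|z|)|z|\,|z|^{-N-2s}dz$ (replacing $K$ by $\bar K$, with $\omega$ a modulus of continuity of the coefficients); these are $O(t^{\alpha-2s+1})$, $O(t^{\alpha-2s+1})$ and $O(\omega(t)\,t^{\alpha-2s})$, all $o(t^{\alpha-2s})$. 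On $|y-x_0|\ge t/2$ one uses $|d^\alpha(y)-d^\alpha(x_0)|\lesssim\min\{|y-x_0|^\alpha,(\diam \Omega)^\alpha\}$, $|h(y)-h(x_0)|\le\mathrm{Lip}(h)|y-x_0|$ and the analogous crude comparisons of $d$ with $\ell$ and of $K$ with $\bar K$ to see that the true far part and the model far part are each $O(t^{\alpha-2s})$ plus a term bounded independently of $t$, with the $t$--dependent pieces agreeing up to $o(t^{\alpha-2s})$. Hence $|\mathcal R(x_0)|\le\varepsilon(\delta)\,t^{\alpha-2s}+C_0$ with $\varepsilon(\delta)\to0$ as $\delta\to0$ and $C_0$ fixed; since $\alpha-2s<0$ forces $t^{\alpha-2s}\ge\delta^{\alpha-2s}$ on $\Omega_\delta$, choosing $\delta$ small (depending on $N,s,\gamma,\Gamma,\alpha$ and the moduli of $h$ and of the coefficients) so that $\varepsilon(\delta)+C_0\,\delta^{2s-\alpha}\le\tfrac12|\kappa|$ gives $|L_Kd^\alpha+\cB(h,d^\alpha)-\kappa\,d^{\alpha-2s}|\le\tfrac12|\kappa|\,d^{\alpha-2s}$ in $\Omega_\delta$, i.e. the stated two--sided estimate with $c_\pm$ of the sign of $\alpha-s$.

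\emph{Main obstacle.} The delicate point is that $\kappa$ degenerates as $\alpha\to s$ — exactly the regime needed in Remark~\ref{rmkeigen} — while the curvature term, the kernel--freezing term and, above all, the transport term $\cB(h,d^\alpha)$ are a priori of the same size $t^{\alpha-2s}$ as the main term. The argument survives only because each of them carries an extra power of $|y-x_0|$ near the diagonal: from the $O(|y-x_0|^2)$ Taylor remainder of $d$, from the (at least continuous) dependence of $K(x_0,\cdot)$ on its base point, and from the Lipschitz bound on $h$. Tracking this gain through the near/far splitting — and in particular checking that the far parts of the true operator and of the model agree to leading order — is the only genuinely technical part; everything else is routine.
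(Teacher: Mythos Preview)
Your argument is correct and rests on the same model computation as the paper: both reduce, after freezing the kernel at $A(x_0,x_0)$ and flattening the boundary, to the one--dimensional identity $-(-\Delta)^s(x_+^\alpha)=\kappa_1(\alpha)x^{\alpha-2s}$ from~\cite{D}, and both treat $\cB(h,d^\alpha)$ as a lower--order $O(d^{\alpha-2s+1})$ correction coming from the Lipschitz bound on $h$.

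The packaging, however, is different. The paper argues by contradiction in the style of~\cite{BDPGMQ}: it takes a sequence $x_k\to\partial\Omega$ violating the estimate, rescales by $d_k=d(x_k)$, and passes to the limit by dominated convergence; the tangential integration is then carried out explicitly via a block decomposition of $A(x_0,x_0)$ (the computation of $J$ in the proof), which produces the factor $|\mathrm{Det}(A')|^{s}|\mathrm{Det}(A)|^{-(1+2s)/2}$ and the one--dimensional integral directly. You instead give a direct quantitative estimate with a near/far splitting at scale $t/2$, and replace the explicit $J$ computation by the linear change of variables sending $A(x_0,x_0)$ to the identity. The blow--up/DCT route is cleaner precisely where your sketch is thinnest, namely the far part ``the $t$--dependent pieces agreeing up to $o(t^{\alpha-2s})$'': there both the true and the model integrals individually contribute at order $t^{\alpha-2s}$ and one must really check the cancellation, which the limiting procedure does automatically. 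Conversely, your direct approach makes the dependence of $\delta$ and $c_\pm$ on the data more transparent. One small caveat: your kernel--freezing bound $\int_{|z|<t/2}t^{\alpha-1}\omega(|z|)|z|^{1-N-2s}dz$ is only $o(t^{\alpha-2s})$ if $\omega(r)r^{1-2s}$ is integrable near $0$, which for $s\ge 1/2$ needs H\"older (e.g.\ Lipschitz) coefficients rather than mere continuity; the paper sidesteps this by absorbing it into the dominated--convergence limit.
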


\begin{proof}
	We argue by contradiction as in~\cite{BDPGMQ}. We assume the existence of a sequence $x_k \to \partial \Omega$ such that
	$$
	\limsup_{k \to \infty} d_k^{-\gamma + 2s} \Big{(}L_Kd^{\gamma}(x_k) + \cB(h, d^\gamma)(x_k) \Big{)}< c_0,
	$$
	where $d_k = d(x_k)$. By rotation and translation, we can assume $x_k \to x_0 \in \partial \Omega$ such that the unit inward normal $n(x_0) := Dd(x_0)$ equals $e_1$, the first canonical vector in $\R^N$. In that case, by the smoothness of the domain, Dominated Convergence Theorem leads to
	\begin{equation*}
	d_k^{-\alpha + 2s} L_K d^\alpha(x_k) \to \pv \int_{\R^N} [(1 + y_1)_+^\alpha - 1] \frac{dy}{|y^t A(x_0, x_0) y|^{(N + 2s)/2}}, \quad \mbox{as} \ k \to \infty,
	\end{equation*}
	where $y = (y_1, y') \in \R \times \R^{N - 1}$. From here we denote $A= A(x_0, x_0)$ for simplicity.
	
	We concentrate in the integral
	\begin{equation}\label{defJ}
	J = \int_{\R^{N - 1}} \frac{dy'}{|y^t Ay|^{(N + 2s)/2}},
	\end{equation}
	starting with the decomposition
	$$
	A = \left [  \begin{array}{cc} a_{11} & v^t \\ v & A' \end{array}\right ],
	$$
	where $a_{11} > 0$, $v \in \R^{N - 1}$ and $A'$ is a $(N - 1) \times (N - 1)$ symmetric, positive definite matrix. Denoting 
	$$
	A_1 = a_{11}^{-1}A, \quad v_1 = a_{11}^{-1} v \quad \mbox{and} \quad A_1' = a_{11}^{-1} A',
	$$
	we have 
	\begin{align*}
	y^t A y = & a_{11}y_1^2 + 2 (y_1, 0')^t A (0, y') + (0, y')^t A (0, y') \\
	= & a_{11} y_1^2 \Big{(} 1 + 2 (1, 0')^t A_1 (0, \frac{y'}{y_1}) + (0, \frac{y'}{y_1})^t A_1 (0, \frac{y'}{y_1})\Big{)}
	%= & a_{11} (x_1 - y_1)^2 \Big{(} 1 + 2 v_1 \frac{x' - y'}{x_1 - y_1} + \frac{x' - y'}{x_1 - y_1}^t A'_1 \frac{x' - y'}{x_1 - y_1}\Big{)} 
	\end{align*}

	Thus, performing the change of variables $z' = y'/y_1$ we have
	\begin{align*}
	J = & a_{11}^{-(N + 2s)/2}|y_1|^{-(1 + 2s)} \int_{\R^{N - 1}} \frac{dz'}{|1 + 2 (1, 0')^t A_1 (0,z') + (0, z')^t A_1 (0, z')|^{(N + 2s)/2}} \\
	= & a_{11}^{-(N + 2s)/2}|y_1|^{-(1 + 2s)} \int_{\R^{N - 1}} \frac{dz'}{|1 + 2 v_1^t z' + z'^t A_1' z'|^{(N + 2s)/2}}.
	\end{align*}

	Now, we use the following decomposition of the matrix $A_1$:
	$$
	A_1 = \left [  \begin{array}{cc} 1 & v_1^t \\ v_1 & B B^t \end{array}\right ],
	$$
	that is, $A_1' = BB^t$ where $B$ is an $(N-1) \times (N-1)$ invertible matrix. Then, using the change of variables $t' = B^t z'$, we have that
	\begin{align}\label{sabado}
	J = a_{11}^{-(N + 2s)/2}|y_1|^{-(1 + 2s)} \int_{\R^{N - 1}} \frac{ |\mathrm{Det}(B)|^{-1}dt'}{|1 + 2 (B^{-1}v_1)^t t' + |t'|^2|^{(N + 2s)/2}}.
	\end{align}
	
	Thus, if $B^{-1}v_1 = 0$ (that is, $v_1 = 0$ and $\mathrm{Det}(A_1) = \mathrm{Det}(A_1')$), we conclude that
	\begin{align*}
	J = & a_{11}^{-(N + 2s)/2}|y_1|^{-(1 + 2s)} |\mathrm{Det}(A_1)|^{-1/2} C^*,
	%= & a_{11}^{-s}|x_1 - y_1|^{-(1 + 2s)} |\mathrm{Det}(A)|^{-1/2} C^*,
	\end{align*}
	with
	$$
	C^* = \int_{\R^{N - 1}} \frac{ dt'}{(1 + |t'|^2)^{(N + 2s)/2}}.
	$$
	
	Using the definition of $A_1$ we conclude that
	\begin{align}\label{pandemia2}
	J = & a_{11}^{-s}|y_1|^{-(1 + 2s)} |\mathrm{Det}(A)|^{-1/2} C^*,
	\end{align}

	If $|B^t v_1| =: c_1 \neq 0$ we use a rotation driving $B^{-1}v_1$ to $|B^{-1} v_1| e'_1$ where $e'_1$ is the first canonical vector in $\R^{N-1}$. Thus, by the symmetry of the domain of integration we conclude from~\eqref{sabado} that
	\begin{align*}
	J = a_{11}^{-(N + 2s)/2}|y_1|^{-(1 + 2s)} \int_{\R^{N - 1}} \frac{ |\mathrm{Det}(B)|^{-1}dt'}{|1 + 2 c_1 t_1' + |t'|^2|^{(N + 2s)/2}},
	\end{align*}
	where $t' = (t_1', t'')$.
	
	At this point, we use the property that
	$$
	\mathrm{Det}(A_1) = \mathrm{Det}(A_1') (1 - v_1^t (BB^t)^{-1} v_1) = \mathrm{Det}(A_1') (1 - |B^{-1}v_1|^2) =  \mathrm{Det}(A_1') (1 - c_1^2),
	$$
	from which we see that $c_1 \in (0,1)$. We write
	\begin{align*}
	1 + 2 c_1 t_1' + |t'|^2 = & (1 - c_1^2) + (t_1' + c_1)^2 + |t''|^2 \\
	= & (1 - c_1^2)\Big{(} 1 + \Big{(} \frac{t_1'}{1 - c_1^2} + \frac{c_1}{1 - c_1^2} \Big{)}^2 + \Big{|} \frac{t''}{1 - c_1^2} \Big{|}^2 \Big{)},
	\end{align*}
	from which, using homogeneity and translation invariance properties on the integral, we arrive at
	\begin{align*}
	\int_{\R^{N - 1}} \frac{dt'}{|1 + 2 c_1 t_1' + |t'|^2|^{(N + 2s)/2}} = (1 - c_1^2)^{-(1 + 2s)/2} C^*,
	\end{align*}
	with $C^*$ as above. Thus, replacing in $J$ we get
	\begin{align*}
	J = & a_{11}^{-(N + 2s)/2}|x_1 - y_1|^{-(1 + 2s)} |\mathrm{Det}(A_1')|^{-1/2} (1 - c_1^2)^{-(1 + 2s)/2} C^* \\
	= & a_{11}^{-(1 + 2s)/2}|x_1 - y_1|^{-(1 + 2s)} |\mathrm{Det}(A')|^{-1/2} \Big{(} \frac{\mathrm{Det}(A_1)}{\mathrm{Det}(A_1')}\Big{)}^{-(1 + 2s)/2} C^*,
	\end{align*}
	and from this we conclude that
	\begin{align}\label{pandemia3}
	J = 
	%& a_{11}^{-(1 + 2s)/2}|x_1 - y_1|^{-(1 + 2s)} |\mathrm{Det}(A')|^{-1/2} \Big{(} \frac{\mathrm{Det}(A)}{\mathrm{Det}(A')}\Big{)}^{-(1 + 2s)/2} C^* \\
	& |y_1|^{-(1 + 2s)} |\mathrm{Det}(A')|^{s} |\mathrm{Det}(A)|^{-(1 + 2s)/2} C^*,
	\end{align}
	which coincides with~\eqref{pandemia2} in the previous case. Then, we replace in~\eqref{defJ} from which we get that
	\begin{equation*}
	d_k^{-\alpha + 2s} L_K d^\alpha(x_k) \to C \ \pv \int_{\R} [(1 + y_1)_+^\alpha - 1] \frac{dy_1}{|y_1|^{1 + 2s}}, \quad \mbox{as} \ k \to \infty,
	\end{equation*}
	for some explicit constant $C > 0$ just depending on $N, s, \gamma, \Gamma$. Thus, 
	\begin{equation*}
	d_k^{-\alpha + 2s} L_K d^\alpha(x_k) \to C_0, \quad \mbox{as} \ k \to \infty,
	\end{equation*}
	for some $C_0 \in \R$. 
	
	On the other hand, a simple computation using the smoothness of $h$ shows that
	$$
	\cB_K (h, d^\alpha)(x_k) = O(d_k^{\alpha - 2s + 1}),
	$$
	from which we conclude that
	\begin{equation*}
	d_k^{-\alpha + 2s} (L_K d^\alpha(x_k) + \cB_K(h, d^\alpha)(x_k)) \to C_0, \quad \mbox{as} \ k \to \infty,
	\end{equation*}
	but this is a contradiction in view of the results in...
\end{proof}

\begin{lemma}\label{Liouville}
Let $u$ be a bounded classical solution of 
\begin{align}\label{eqL}
L_K=0, \quad\text{in }\R^N,
\end{align}
where 
\[
K(x,y)=\frac{1}{|(x-y)^tA(x,y)(x-y)|^{(N+2s)/2}}
\]
and $A(x,y)$ is elliptic, that is 
\[
\gamma|\xi|^2<\xi^t A(x,y)\xi<\Gamma|\xi|^2,
\]
for some $\Gamma>\gamma>0$. Then $u$ is constant.
\end{lemma}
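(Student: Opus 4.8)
The plan is to deduce the statement from a scale--invariant Harnack inequality for the class of kernels satisfying \eqref{eliptic}, which is by now standard in the nonlocal De Giorgi--Nash--Moser theory; the strong maximum principle alone will not suffice, because the supremum of $u$ need not be attained.

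First I would record that the kernel $K(x,y)=|(x-y)^t A(x,y)(x-y)|^{-(N+2s)/2}$ fits the framework of the paper: by the ellipticity of $A$ it satisfies the two--sided bound \eqref{eliptic} after absorbing into $c_{N,s}$ constants depending only on $\gamma,\Gamma$, and it is symmetric since $A(x,y)=A(y,x)$. Moreover this class is invariant under parabolic scaling: if $L_K u=0$ in $\R^N$ and $u_R(x):=u(Rx)$, then $L_{K_R}u_R=0$ in $\R^N$, where $K_R(x,y)=R^{N+2s}K(Rx,Ry)=|(x-y)^t A(Rx,Ry)(x-y)|^{-(N+2s)/2}$, and $A(Rx,Ry)$ is elliptic with the same constants. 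Consequently the interior Harnack inequality for nonnegative solutions holds on every ball $B_R$ with a constant $C=C(N,s,\gamma,\Gamma)$ \emph{independent of $R$}: if $w\ge 0$ in $\R^N$ and $L_K w=0$ in $B_{2R}$, then $\sup_{B_R} w\le C\,\inf_{B_R} w$ (the nonlocal tail term contributes nothing since $w\ge 0$ on all of $\R^N$). See, e.g., \cite{C} and the references on nonlocal De Giorgi--Nash--Moser estimates therein.

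With this in hand the Liouville property is immediate. Since $u$ is bounded, set $M=\sup_{\R^N}u$ and $m=\inf_{\R^N}u$, and suppose for contradiction that $M>m$. The function $w:=M-u$ satisfies $w\ge 0$ in $\R^N$ and $L_K w=0$ in $\R^N$, because $L_K$ annihilates constants. Applying the Harnack inequality on $B_R$ gives $\sup_{B_R}w\le C\,\inf_{B_R}w$ for every $R>0$, with $C$ independent of $R$. Letting $R\to+\infty$, the right--hand side tends to $\inf_{\R^N}w=M-\sup_{\R^N}u=0$, whereas the left--hand side tends to $\sup_{\R^N}w=M-m>0$, a contradiction. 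Hence $M=m$ and $u$ is constant.

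The only genuinely external ingredient is the Harnack inequality (equivalently, an interior H\"older estimate with decay of the oscillation) for variable, merely measurable--in--$x$ symmetric kernels in the class \eqref{eliptic}, with a constant depending only on the ellipticity constants; the scale invariance of that class is exactly what upgrades the fixed--ball estimate to one uniform in $R$, and is the point that requires care. If one prefers to avoid quoting Harnack directly, the same conclusion follows from the equivalent oscillation--decay estimate $\osc_{B_r}(u)\le C\,(r/R)^{\alpha}\,\|u\|_{L^\infty(\R^N)}$ for $r<R$ (here the tail contributes only $\|u\|_{L^\infty(\R^N)}$ since $u$ is globally bounded), again letting $R\to+\infty$ to force $\osc_{B_r}(u)=0$ for every $r$.
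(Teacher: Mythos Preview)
Your proposal is correct and follows essentially the same route as the paper: both arguments exploit the scale invariance of the ellipticity class together with the Harnack inequality (the paper cites Caffarelli--Silvestre rather than \cite{C}) to force the oscillation to vanish. The only cosmetic difference is that the paper normalizes so that $\inf_{\R^N}u=0$ and rescales explicitly to $u_R(x)=u(Rx)$ before applying Harnack on the unit ball, whereas you apply Harnack directly to $M-u$ on $B_R$ and send $R\to\infty$; your version is arguably the cleaner of the two.
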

\begin{proof}
Without loss of generality we can assume
\[
\inf\limits_{\R^N}u=0.
\]
If $u\equiv0$ there is nothing to prove. Otherwise let $R>1$, define $u_R=u(Rx)$, $A_R(x,y)=A(Rx, Ry)$ and its associated kernel 
\[
K_R(x,y)=\frac{1}{|(x-y)^tA_R(x,y)(x-y)|^{(N+2s)/2}}.
\]
Observe now that $A_R$ is elliptic with the same ellipticity constants of the original matrix $A$. Furthermore, since $u$ is a solution of \eqref{eqL}
\begin{align*}
L_{K_R}u_R(x)&=\text{P.V.}\int\frac{u(Rx)-u(Ry)}{|(x-y)^tA_R(x,y)(x-y)|^{(N+2s)/2}}dy\\
&=R^{2s}\text{P.V.}\int\frac{u(Rx)-u(z)}{|(Rx-z)^tA_R(x,z/R)(Rx-z)|^{(N+2s)/2}}dy\\
&=R^{2s}\text{P.V.}\int\frac{u(Rx)-u(z)}{|(Rx-z)^tA(Rx,z)(Rx-z)|^{(N+2s)/2}}dy\\
&=R^{2s}L_Ku(Rx)\\
&=0.
\end{align*}
Let $\varepsilon>0$ and note that 
\[
\inf\limits_{B_1^c} u_R\leq \varepsilon,
\]
otherwise, by the strong maximum principle, $u_R\equiv0$. By the minimum principle then
\[
\inf\limits_{B_1} u_R\leq \varepsilon,
\] 
Then by the Harnack inequality, see for example \cite{Caffarelli-Silvestre}, there exists a universal constant $C=C(n,\gamma, \Gamma)$, but not depending on $R$, such that 
\[
\sup\limits_{B_{1/2}}u_R\leq C\inf\limits_{B_1} u_R\leq C\varepsilon,
\]
which translates into
\[
\sup\limits_{B_{R/2}}\leq C\varepsilon.
\]
Since $\varepsilon$ and $R$ are arbitrary we deduce that $u=0$.
\end{proof}

\bigskip

\noindent {\bf Acknowledgements.} 

G. D. was partially supported by Fondecyt Grant No. 1190209.% and Programa Basal, CMM. U. de Chile. 

E. T. was partially supported by Fondecyt No. 1201897.

\end{document}